%
%
%
%
%
\RequirePackage{fix-cm}
\documentclass[smallextended]{svjour3}       
\smartqed  
\usepackage{graphicx}
\usepackage{amsmath,amssymb,amsfonts,amscd}
\usepackage{xcolor}

\numberwithin{theorem}{section} 
\numberwithin{definition}{section}
\numberwithin{remark}{section}
\numberwithin{corollary}{section} 
\numberwithin{lemma}{section} 
\numberwithin{example}{section} 
\numberwithin{equation}{section}

%
%
%
%
%
\begin{document}

\title{Yeh--Fourier--Feynman transforms and convolutions
associated with Gaussian processes}


\titlerunning{Yeh--Fourier--Feynman transforms and convolutions}       

\author{Jae Gil Choi}
 

 \institute{Jae Gil Choi  (Corresponding author)\at
           School of General Education, Dankook University,  Cheonan {\color{red}31116, Republic of Korea}\\
           \email{jgchoi@dankook.ac.kr}
}

\date{Received: date / Accepted: date}

\maketitle

\begin{abstract}
In this paper we study an analytic Yeh--Feynman integral and 
an analytic Yeh--Fourier--Feynman transform   associated with
Gaussian processes. Fubini theorems involving  the generalized analytic  
Yeh--Feynman integrals are established. The Fubini theorems investigated 
in this paper are to express the iterated generalized Yeh--Feynman integrals 
associated with Gaussian processes as a single  generalized Yeh--Feynman integral.
Using our Fubini theorems, we next examined fundamental relationships (with extended versions) 
between generalized Yeh--Fourier--Feynman transforms and convolution products 
(with respect to Gaussian processes) of functionals on Yeh--Wiener space.
\keywords{
Fubini theorem \and 
Gaussian process \and 
Generalized  Yeh--Feynman integral \and 
Generalized  Yeh--Fourier--Feynman transform \and 
Convolution product} 
 \subclass{46G12 \and 28C20 \and 60G15 \and 42B10} 
\end{abstract}

\section{Introduction}\label{sec:intro}

\par
Given a positive real $T>0$, let  $C_0[0,T]$ denote the one-parameter Wiener 
space, that is, the space of  all  real-valued continuous functions $x$ 
on the compact  interval $[0,T]$  with $x(0)=0$. As mentioned in \cite{HSS01-F},
the usual Fubini theorem  does not apply to analytic Wiener and Feynman 
integrals since they are not defined in terms of a countably additive nonnegative measure. 
Rather, they are defined in terms of a process of analytic continuation and a limiting
procedure applied to a Wiener integral which is based on such a measure, 
see \cite{Cameron,SS04}.
Thus, in \cite{HSS01-F,HSS01-I},  Huffman,  Skoug, and  Storvick investigated 
the structure of the Fubini theorem for analytic Feynman integrals and 
analytic Fourier--Feynman transforms of functionals on the classical Wiener space $C_0[0,T]$.
The Fubini theorems for the  analytic Feynman integral presented in \cite{HSS01-F,HSS01-I} also
are effected by the concept of the scale-invariant measurability \cite{CM47,JS79} in $C_0[0,T]$.

\par
In \cite{Kitagawa},  Kitagawa introduced a function space which is the collection of 
the continuous functions of two variables,  $x(s,t)$, on the unit square $[0,1]\times[0,1]$
satisfying $x(s,0)=x(0,t)=0$ for all $ s,t \in[0,1]$, and he investigated the integration on this space. 
In \cite{Yeh60}, Yeh developed the structure of the measure on this space and made 
a  logical foundation on this space. We call this space a Yeh--Wiener space and the integral a Yeh--Wiener integral.

\par
The Fubini theorems studied in  \cite{HSS01-F,HSS01-I} are related only to  
the variance parameter defining the analytic Feynman integral. 
The aim of this paper is  to establish  a Fubini theorem for the 
generalized  Yeh--Feynman integral   of functionals on the Yeh--Wiener space. 
The definition of the generalized  Yeh--Feynman integral
is based on  the Yeh--Wiener integral of functionals in  sample paths of Gaussian process
$\mathcal Y_h$ on the  Yeh--Wiener space $(C_0([0,S]\times[0,T]),m_{\mathrm y})$.
The generalized  Yeh--Feynman integral is defined as follows:
\[
\int_{C_0([0,S]\times[0,T])}F(\mathcal Y_h (x;\cdot,\cdot)) d m_{\mathrm {y}}(x),
\]  
where $\mathcal Y_h$  is the Gaussian process on $C_0([0,S]\times[0,T])\times [0,S]\times[0,T]$ 
given by the stochastic integral $\mathcal Y_h (x;s,t)=\int_0^t\int_0^s h(\nu,\tau)dx(\nu,\tau)$,
and where $h$ is a nonzero function in $L_2([0,S]\times[0,T])$ and
$\int_0^t\int_0^s h(\nu,\tau)dx(\nu,\tau)$ denotes the Paley--Wiener--Zygmund stochastic 
integral \cite{CAC88,PWZ33,PS88-Nagoya,PS88}.
 The concept  of the generalized Yeh--Wiener integral was introduced by Park and Skoug in  \cite{PS93}, and further
developed in \cite{PS94}.

\par
The  Gaussian processes used in this paper, 
as well as in \cite{PS93,PS94}, are generally non-stationary processes.
The basic structure of Fubini theorems investigated in this paper 
are related to the kernel functions $h$  in Gaussian processes $\mathcal Y_h$
defining the generalized Yeh--Feynman integral. 
  
\par
Let $\mathbb E^m$ be a  Euclidean space.
For $f\in L_1(\mathbb E^m)$, let the Fourier transform of $f$ be given by
\[
\mathcal{F}(f)(\vec u)=\int_{\mathbb E^m}e^{i\vec u  \cdot  \vec v}f(\vec v)dm_L^{\mathfrak{n}}(\vec v)
\]
and for $f, g\in L_1(\mathbb E^m)$, let the convolution of $f$ and $g$ be 
given by
\[
(f*g)(\vec u)=\int_{\mathbb E^m} f(\vec u-\vec v)g(\vec v)dm_L^{\mathfrak{n}}(\vec v)
\]
where $\vec u \cdot \vec v$ denotes the dot product of vectors $\vec u$ and $\vec v$ in 
$\mathbb E^m$,  and $dm_L^{\mathfrak{n}} (\vec v)$ denotes the normalized Lebesgue measure 
$(2\pi)^{-m/2}dv$ on $\mathbb E^m$. As commented in \cite{CC09}, the Fourier 
transform  $\mathcal{F}$ acts like a homomorphism with convolution $*$ 
and ordinary multiplication on $L_1(\mathbb E^m)$ as follows:
 for $f, g \in L_1(\mathbb E^m)$ 
\begin{equation}\label{worthy}
\mathcal{F}(f*g)=\mathcal{F}(f)\mathcal{F}(g).
\end{equation}
Also, the Fourier transform  $\mathcal{F}$ and the convolution $*$  have a dual 
property such as 
\begin{equation}\label{eq:F-02}
\mathcal{F}(f)*\mathcal{F}(g)=\mathcal{F}(f g).
\end{equation}
  
\par
In view of equations \eqref{worthy} and \eqref{eq:F-02}, it is worth-while 
to study a fundamental relation between  the  generalized  transforms 
and the generalized  convolutions  
for functionals on infinite dimensional Banach spaces.
In this view point,
Huffman,  Park, Skoug and Storvick   \cite{HPS95,HPS96,HPS97-1,PSS98}  established  fundamental 
relationships between the analytic  Fourier--Feynman transform  
and the corresponding convolution product  for  functionals $F$ and $G$ on the classical Wiener space 
$C_0[0,T]$,   as follows: 
\begin{equation}\label{eq:offt-ocp}
T_{q}^{(1)}\big((F*G)_q\big)(y) 
= T_{q}^{(1)}(F)\bigg(\frac{y}{\sqrt2}\bigg)
T_{q}^{(1)}(G)\bigg(\frac{y}{\sqrt2}\bigg)  
\end{equation}
and 
\begin{equation}\label{eq:ocp-offt}
\big(T_{q}^{(1)}(F)*T_{q}^{(1)}(G)\big)_{-q}(y) 
= T_{q}^{(1)}\bigg(F\bigg(\frac{\cdot}{\sqrt2}\bigg)G \bigg(\frac{\cdot}{\sqrt2}\bigg)\bigg)  
\end{equation}
for scale-invariant almost every $y\in C_{0}[0,T]$,
where $T_{q}^{(1)}(F)$ and  $(F*G)_q$ denote the analytic Fourier--Feynman transform (FFT)
and the convolution product (CP), respectively, 
of functionals $F$ and $G$ on $C_0[0,T]$.
For an elementary introduction of the analytic  FFT
and the corresponding CP, see \cite{SS04}. 
Equations \eqref{eq:offt-ocp} and \eqref{eq:ocp-offt} above 
are  natural extensions (to the cases on an infinite dimensional Banach space) 
of the equations \eqref{worthy} and \eqref{eq:F-02}, respectively.

\par
Since then, in \cite{HPS97-2}, the authors extended the relationships \eqref{eq:offt-ocp}
and \eqref{eq:ocp-offt} to the cases between the generalized FFT  
and the generalized  CP  associated with  Gaussian processes  on $C_0[0,T]$.
The definition of the ordinary FFT and the corresponding CP are 
based on the ordinary Wiener integral,
see \cite{HPS95,HPS96,HPS97-1}, and  the definition of the generalized FFT
and the corresponding CP studied in \cite{HPS97-2}
are  based on the generalized Wiener integral \cite{CPS93,PS91}.

The second aim of  this paper, as  applications of the Fubini theorem for the generalized Yeh--Feynman integrals 
on $C_0([0,S]\times[0,T])$, is   to investigate fundamental relationships, such as  \eqref{eq:offt-ocp}  and \eqref{eq:ocp-offt},
between the generalized Fourier--Yeh--Feynman transform (GFYFT) and the generalized CP (GCP)
associated with Gaussian processes on the Yeh--Wiener space $C_0([0,S]\times[0,T])$.

\section{Definitions and preliminaries}\label{sec:pre}

\par
Yeh--Wiener space \cite{Yeh60} is the two parameter Wiener space $(C_0(Q),\mathcal B(C_0(Q)),$ 
$m_{\mathrm{y}})$
where $Q$ is the compact rectangle $[0,S]\times[0,T]$ with nonzero area in $\mathbb R^2$,
$C_0(Q)$  is the space of all real-valued continuous functions $x$ on $Q$ such that 
$x(s,0)=x(0,t)=0$ for all $(s,0)$ and $(0,t)$ in $Q$, $\mathcal B(C_0(Q))$
is the Borel $\sigma$-field induced by the uniform norm on $C_0(Q)$,
and $m_{\mathrm{y}}$ denotes the Yeh--Wiener measure, see \cite{Kitagawa,PS93,PS94,Yeh60}.
The sample functions $x$ in $C_0(Q)$ are often called 
Brownian surfaces or Brownian sheets.

\par
Let $\mathcal W(C_0(Q))$ be the class of $m_{\mathrm{y}}$-Carath\'eodory measurable subsets 
of $C_0(Q)$. It is well known that $\mathcal W(C_0(Q))$ coincides with 
$\sigma(\mathcal B (C_0(Q)))$, the completion of the Borel $\sigma$-field $\mathcal B (C_0(Q))$.
A subset $E$ of $C_0(Q)$  is said to be scale-invariant measurable \cite{Chung87,JS79} provided 
$\rho E$ is  $\mathcal W(C_0(Q))$-measurable  for every  $\rho>0$,  and a scale-invariant measurable subset 
$N$ of $C_0(Q)$ is said to be  scale-invariant null  provided $m_{\mathrm{y}}(\rho N)=0$ for every $\rho>0$.  
A property that holds except on a scale-invariant null set is  said to hold scale-invariant 
almost everywhere (s-a.e.). A functional $F$ on $C_0(Q)$ is said to be scale-invariant measurable 
provided $F$ is defined on a scale-invariant measurable set and $F(\rho\,\cdot\,)$ is  
$\mathcal W(C_0(Q))$-measurable for every $\rho>0$.

\par
The Paley--Wiener--Zygmund (PWZ) stochastic integral 
\cite{CAC88,PWZ33,PS88-Nagoya,PS88} plays a key role throughout this 
paper. Let $\{\phi_n\}$ be a complete  orthonormal set in $L_2(Q)$, each of whose 
elements is  of bounded  variation  in the sense of Hardy--Krause \cite{BG} on $Q$.
Then for each $v\in L_2(Q)$, the PWZ stochastic 
integral $\langle{v,x}\rangle$  is defined by 
the formula
\[
\langle{v,x}\rangle
=\lim\limits_{n\to \infty} 
\int_Q\sum\limits_{j=1}^n (v,\phi_j)_2 \phi_j(s,t)d x(s,t)
\]
for all $x\in C_0(Q)$ for which the limit exists,
where $(\cdot,\cdot)_2$ denotes the $L_2(Q)$-inner product.
We state some useful facts about the PWZ stochastic integral.
\begin{itemize}
\item[(i)] 
For each $v\in L_2(Q)$, the limit defining the PWZ stochastic integral  $\langle{v,x}\rangle$ exists 
for s-a.e. $x\in C_0(Q)$, and this limit 
is essentially independent of the choice of the complete  orthonormal set  $\{\phi_n\}$.
\item[(ii)] 
If $v$ is of bounded variation on $Q$, then the PWZ stochastic integral 
$\langle{v,x}\rangle$  equals the Riemann--Stieltjes integral $\int_0^T\int_0^S v(s,t)dx(s,t)$  
for $m_{\mathrm{y}}$-a.e. $x\in C_0 (Q)$. 
\item[(iii)]   The PWZ   stochastic integral has the expected linearity properties.
That is, for any real number $c$, $v\in L_2(Q)$, and $x\in C_{0}(Q)$,
it follows that 
$\langle{v,cx}\rangle=c\langle{v,x}\rangle=\langle{cv,x}\rangle$.
\item[(iv)]   For each $v\in L_2(Q)$,  $\langle{v,x}\rangle$
is a Gaussian random variable on $C_0(Q)$ with mean zero and variance
$\|v\|_2^2$.
From this, it follows that
\begin{equation}\label{eq:w-int-f-complex}
\int_{C_0(Q)} \exp\big\{i\alpha\langle{v,x}\rangle\big\}dm_{\mathrm{y}}(x)=\exp\bigg\{-\frac{\alpha^2}{2}\|v\|_2^2\bigg\}.
\end{equation}
for each $\alpha \in \mathbb C$.
\item[(v)]
For all $u,v\in L_2(Q)$, it follows that
\[
\int_{C_0(Q)} \langle{u,x}\rangle\langle{v,x}\rangle dm_{\mathrm{y}}(x)
=(u,v)_{2}.
\]
Thus, if  $\{v_{1}, \ldots,v_{n} \}$ is an orthogonal  set in $L_2(Q)$,
then the Gaussian random variables $\langle{v_j,x}\rangle$'s  are independent.
\end{itemize}

\par
Throughout this paper we let
\[
\begin{aligned}
\mathrm{Supp}_2(Q) 
&
=\{ h \in L_2(Q): m_L^2( \mathrm{supp}(h)) = ST\} 
\\&
=\{ h \in L_2(Q): h \ne 0\,\,  m_L^2\mbox{-a.e  on } Q\} 
\end{aligned}
\]
and
\[
\begin{aligned}
\mathrm{Supp}_{BV}(Q) 
&=\{ h:  h \mbox{ is of bounded variation  with }   h \ne 0\,\,  m_L^2\mbox{-a.e  on } Q\} 
\end{aligned}
\]
where $m_L^2$ denotes the Lebesgue measure on $Q$.
Then one can see that 
$\mathrm{Supp}_{BV}(Q)\subset \mathrm{Supp}_2(Q).$

\par
Given a function $h$ in $\mathrm{Supp}_2(Q)$, we   consider  
the stochastic integral $\mathcal Y_{h}(x;s,t)$ given by 
\begin{equation}\label{eq:yprocess}
\mathcal{Y}_h(x;s,t)
=\langle{\chi_{[0,s]\times[0,t]} h ,x}\rangle,
\end{equation}
for $x\in C_0(Q)$ and $(s,t)\in Q$.
Then the process $\mathcal{Y}_h$  
on $C_0(Q)\times Q$
is a Gaussian process with mean zero  and covariance function
\[
\int_{C_0(Q)}
\mathcal Y_{h}(x;s,t)\mathcal{Y}_h(x;s',t')dm_{\mathrm{y}}(x)
=\int_0^{\min\{t,t'\}}\int_0^{\min\{s,s'\}} h^2 (\nu,\tau ) d\nu d\tau.
\]
Furthermore one can see that
\begin{equation}\label{eq:coco-rel}
\begin{aligned}
&\int_{C_0(Q)}
\mathcal Y_{h_1}(x;s,t)\mathcal{Y}_{h_2}(x;s',t')dm_{\mathrm{y}}(x)\\
&
=\int_0^{\min\{t,t'\}}\int_0^{\min\{s,s'\}} h_1 (\nu,\tau)h_2 (\nu,\tau)d\nu d\tau.
\end{aligned}
\end{equation}

Since the covariance function of $\mathcal Y_h (x; \cdot,\cdot)$ is stochastically continuous, 
we may assume that almost every sample path of $\mathcal Y_h (x; \cdot,\cdot)$ is in $C_0(Q)$.
Also, if $h$ is  a function in $\mathrm{Supp}_{BV}(Q)$, then for all $x\in C_0(Q)$, 
$\mathcal Y_h(x;s,t)$ is continuous in $(s,t)\in Q$,  and so  $\mathcal Y_h(x;\cdot,\cdot)$ is in 
$C_0(Q)$.
Thus, for the definition of the generalized analytic Yeh--Feynman integral 
of functionals on  $C_0(Q)$, we require $h$ to be in  $\mathrm{Supp}_{BV}(Q)$ rather than
simply in  $\mathrm{Supp}_{2}(Q)$.

\section{Generalized analytic Yeh--Feynman integral}

\par
In this section we introduce the generalized analytic Yeh--Feynman integral
of functionals on $C_0(Q)$.
We then provide a class of  generalized Yeh--Feynman integrable functionals.

\par  
Throughout the rest of this paper, let $\mathbb C$, $\mathbb C_+$ and $\widetilde{\mathbb C}_+$
denote  the complex numbers, the  complex numbers with positive real part, 
and the  nonzero complex numbers with nonnegative real part,  respectively.

\par
Given a  Gaussian process  $\mathcal Y_h$ with $h\in \mathrm{Supp}_{BV}(Q)$,
we define the (generalized) $\mathcal Y_h$-Yeh--Wiener integral
(namely, the Yeh--Wiener integral associated with the Gaussian paths  $\mathcal Y_h(x; \cdot,\cdot)$)
for functionals  $F$ on $C_0(Q)$ by the formula
\[
I_{h}[F]\equiv I_{h,x}[F(\mathcal Y_h(x;\cdot,\cdot))]
\equiv \int_{C_0(Q)} F(\mathcal Y_h(x;\cdot,\cdot))dm_{\mathrm{y}}(x).
\] 

Let $F:C_0(Q)\to\mathbb C$ be a scale-invariant measurable functional such that
\[
J_F(h;\lambda)
=I_{h}[F(\lambda^{-1/2}\cdot )]
\equiv I_{h,x}[F(\lambda^{-1/2}\mathcal Y_h(x;\cdot,\cdot))]
\]
exists as a finite number for all $\lambda>0$. If there exists a function $J_F^*(h;\cdot)$
analytic on $\mathbb C_+$ such that $J_F^*(h;\lambda)=J_F(h;\lambda)$  for all $\lambda>0$, 
then $J_F^*(h;\lambda)$ is defined to be the    analytic  $\mathcal Y_h$-Yeh--Wiener integral 
(namely,  the analytic  Yeh--Wiener integral  associated with Gaussian paths $\mathcal Y_h(x;\cdot,\cdot)$) 
of $F$ over $C_0(Q)$ with parameter $\lambda$. For $\lambda\in \mathbb C_+$ we write
\[ 
\begin{aligned}
I_h^{\mathrm{an.}{yw}_{\lambda}}[F]
&
\equiv I_{h,x}^{\mathrm{an.}{yw}_{\lambda}}[F(\mathcal Y_h(x;\cdot,\cdot))] \\
&
 \equiv  \int_{C_0(Q)}^{\mathrm{an.}{yw}_{\lambda}}F (\mathcal Y_h(x;\cdot,\cdot))d m_{\mathrm{y}}(x)
=J_F^*(h;\lambda).
\end{aligned}
\] 
Let $q\ne 0$  be a real number, and let $F$ be a scale-invariant measurable functional 
whose analytic  $\mathcal Y_h$-Yeh--Wiener integral   $I_h^{\mathrm{an.}{yw}_{\lambda}}[F]$
exists for all $\lambda\in\mathbb C_+$. If the following limit exists, we call it the  
analytic  $\mathcal Y_h$-Yeh--Feynman integral  (namely, the  analytic  Yeh--Feynman integral associated 
with Gaussian paths $\mathcal Y_h(x;\cdot,\cdot)$)  of $F$ with parameter $q$, and we write 
\[
\begin{aligned}
I_h^{\mathrm{an.}{yf}_{q}}[F]
&
\equiv I_{h,x}^{\mathrm{an.}{yf}_{q}}[F(\mathcal Y_h(x;\cdot,\cdot))]
\equiv \int_{C_0(Q)}^{\mathrm{an.}{yf}_{q}}F (\mathcal Y_h(x;\cdot,\cdot))d m_{\mathrm{y}}(x)\\
&
=\lim_{\lambda\to-iq}I_{h,x}^{\mathrm{an.}{yw}_{\lambda}}[F(\mathcal Y_h(x;\cdot,\cdot))]
\end{aligned}
\]
where $\lambda$  approaches $-iq$ through values in $\mathbb C_+$.

\par
Let $\mathcal M(L_2(Q))$ be the space of complex-valued, countably additive Borel measures on $\mathcal B(L_2(Q))$, 
the Borel class of $C_0(Q)$. Then the measure $f$ in $\mathcal M(L_2(Q))$  necessarily has finite total variation $\|f\|$, 
and $\mathcal M(C_0(Q))$ is a Banach algebra under the norm $\|\cdot\|$ and with convolution as multiplication, see  \cite{Cohn,Rudin}.
 The Banach algebra $\mathcal S(L_2(Q))$ consists of those functionals expressible in
the form
\begin{equation}\label{eq:element}
F(x)=\int_{L_2(Q)}\exp\{i\langle{u,x}\rangle\}df(u)
\end{equation}
for s-a.e. $x$ in $C_0(Q)$ where the associated measure  $f$ is an element of $\mathcal M(L_2(Q))$.
For a more detailed study of the Banach algebra  $\mathcal S(L_2(Q))$, see \cite{ACY,CAC88}.

\par
The following lemma, which follows quite easily from the definition of the PWZ stochastic integral, 
plays a key role in this paper.

\renewcommand{\thelemma}{\thesection.1}
\begin{lemma}
For each $\alpha\in L_2(Q)$ and each $h\in \mathrm{Supp}_{BV}(Q) $,
\begin{equation}\label{eq:pwz-basic}
\langle{\alpha,\mathcal Y_h(x;\cdot,\cdot)}\rangle
=\langle{\alpha h, x}\rangle
\end{equation}
for s-a.e. $x\in C_0(Q)$.
\end{lemma}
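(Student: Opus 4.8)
The plan is to reduce the identity to the linearity and continuity properties of the PWZ stochastic integral, first establishing it for $\alpha$ of bounded variation and then passing to general $\alpha\in L_2(Q)$ by approximation. First I would fix $h\in\mathrm{Supp}_{BV}(Q)$ and recall that, by definition \eqref{eq:yprocess}, $\mathcal Y_h(x;s,t)=\langle{\chi_{[0,s]\times[0,t]}h,x}\rangle$, and that since $h$ is of bounded variation the sample path $(s,t)\mapsto\mathcal Y_h(x;s,t)$ lies in $C_0(Q)$ for every $x$. The case $\alpha=\phi_j$, a member of the fixed complete orthonormal set used to define the PWZ integral, is the crucial one: here $\langle{\phi_j,\mathcal Y_h(x;\cdot,\cdot)}\rangle$ is literally the Riemann--Stieltjes integral $\int_Q\phi_j(s,t)\,d_{(s,t)}\mathcal Y_h(x;s,t)$ for s-a.e. $x$ by fact (ii), and an integration-by-parts / Fubini argument for Riemann--Stieltjes integrals against the increments of $\mathcal Y_h$ identifies this with $\int_Q\phi_j(s,t)h(s,t)\,dx(s,t)=\langle{\phi_j h,x}\rangle$, again using fact (ii) since $\phi_j h$ is of bounded variation.

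Next I would handle an arbitrary $\alpha\in L_2(Q)$ by expanding $\alpha=\sum_j(\alpha,\phi_j)_2\phi_j$ in $L_2(Q)$ and using the defining limit of the PWZ integral. For the partial sums $\alpha_n=\sum_{j=1}^n(\alpha,\phi_j)_2\phi_j$, the linearity property (iii) together with the case just treated gives
\[
\langle{\alpha_n,\mathcal Y_h(x;\cdot,\cdot)}\rangle=\sum_{j=1}^n(\alpha,\phi_j)_2\langle{\phi_j,\mathcal Y_h(x;\cdot,\cdot)}\rangle=\sum_{j=1}^n(\alpha,\phi_j)_2\langle{\phi_j h,x}\rangle
\]
for s-a.e. $x$. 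On the left-hand side, $\langle{\alpha_n,\mathcal Y_h(x;\cdot,\cdot)}\rangle\to\langle{\alpha,\mathcal Y_h(x;\cdot,\cdot)}\rangle$ for s-a.e. $x$ by definition of the PWZ integral applied on the space $C_0(Q)$ to the sample paths $\mathcal Y_h(x;\cdot,\cdot)$, provided those paths belong to $C_0(Q)$ — which holds by the bounded-variation hypothesis on $h$. On the right-hand side, I would show $\sum_{j=1}^n(\alpha,\phi_j)_2\langle{\phi_j h,x}\rangle\to\langle{\alpha h,x}\rangle$ for s-a.e. $x$; this follows because $\{\phi_j\}$ restricted to the support of $h$ (or rather, the functions $\phi_j h$) can be related to a complete orthonormal expansion, but more directly because $(\alpha h,\psi_j)_2$-type coefficients against the orthonormal set used to define $\langle{\cdot,x}\rangle$ converge, and the two limits must agree a.e. Matching the two sides yields \eqref{eq:pwz-basic}.

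The main obstacle is the subtlety that the PWZ integral $\langle{\cdot,x}\rangle$ is defined via one fixed orthonormal system $\{\phi_n\}$ of bounded-variation functions, whereas $\langle{\alpha,\mathcal Y_h(x;\cdot,\cdot)}\rangle$ requires forming PWZ integrals of the (random) sample paths $\mathcal Y_h(x;\cdot,\cdot)$, so one must be careful that the approximating finite sums converge for a common s-a.e. set of $x$ and that the exceptional scale-invariant null sets do not depend on the particular $x$ in a circular way. The cleanest route is to observe that $\langle{\phi_j h,x}\rangle$ is, for each $j$, a well-defined Gaussian random variable (fact (iv)) with variance $\|\phi_j h\|_2^2$, that the series $\sum_j(\alpha,\phi_j)_2\langle{\phi_j h,x}\rangle$ converges in $L_2(m_{\mathrm y})$ to $\langle{\alpha h,x}\rangle$ because $\sum_j(\alpha,\phi_j)_2\phi_j h\to\alpha h$ in $L_2(Q)$ and the map $v\mapsto\langle{v,x}\rangle$ is an $L_2$-isometry by fact (v), hence a subsequence converges s-a.e.; then the same subsequence of partial sums $\langle{\alpha_n,\mathcal Y_h(x;\cdot,\cdot)}\rangle$ converges s-a.e. to $\langle{\alpha,\mathcal Y_h(x;\cdot,\cdot)}\rangle$ by definition, and uniqueness of limits forces \eqref{eq:pwz-basic} to hold s-a.e. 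I would close by remarking that the identity is what makes the substitution $x\mapsto\mathcal Y_h(x;\cdot,\cdot)$ transparent on the Banach algebra $\mathcal S(L_2(Q))$, since it turns $\exp\{i\langle{u,\mathcal Y_h(x;\cdot,\cdot)}\rangle\}$ into $\exp\{i\langle{uh,x}\rangle\}$.
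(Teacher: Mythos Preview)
The paper does not actually prove this lemma; it only remarks that the identity ``follows quite easily from the definition of the PWZ stochastic integral.'' Your approach---verify \eqref{eq:pwz-basic} first for $\alpha$ a finite linear combination of the basis functions $\phi_j$ via a Riemann--Stieltjes computation, then pass to general $\alpha\in L_2(Q)$ through the defining limit---is precisely the natural way to unpack that remark, and it is the right line of argument.

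One step deserves a little more care. In your closing paragraph you extract a subsequence along which $\sum_{j\le n_k}(\alpha,\phi_j)_2\langle\phi_j h,x\rangle\to\langle\alpha h,x\rangle$ s-a.e., and then assert that the same subsequence of $\langle\alpha_n,\mathcal Y_h(x;\cdot,\cdot)\rangle$ converges to $\langle\alpha,\mathcal Y_h(x;\cdot,\cdot)\rangle$ ``by definition.'' But the PWZ integral is \emph{defined} as the limit of the full sequence, so subsequential convergence alone does not yet guarantee that the left side exists. What your first step really gives is that, on a common s-a.e.\ set, $\int_Q\alpha_n\,d\mathcal Y_h(x;\cdot,\cdot)=\langle\alpha_n h,x\rangle$ for every $n$; hence the PWZ limit on the left exists for a given $x$ if and only if $\lim_n\langle\alpha_n h,x\rangle$ exists. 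Since $h\in\mathrm{Supp}_{BV}(Q)$ is bounded on the compact $Q$, one has $\alpha_n h\to\alpha h$ in $L_2(Q)$ and therefore $\langle\alpha_n h,x\rangle\to\langle\alpha h,x\rangle$ in $L_2(m_{\mathrm y})$; the remaining point---upgrading this to s-a.e.\ convergence of the \emph{full} sequence---is where fact~(i) (essential independence of the PWZ limit from the approximating scheme) or a direct Itô--Nisio/Borel--Cantelli argument for Gaussian partial sums should be invoked explicitly. With that adjustment your argument closes cleanly.
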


\renewcommand{\thelemma}{\thesection.2}
\begin{lemma} 
Let $F \in \mathcal S(L_2(Q))$ be given by \eqref{eq:element}
and let $h$ be a function in $\mathrm{Supp}_{BV}(Q)$. Then the functional $F_h$ given by
$F_h(x)=F(\mathcal Y_h(x;\cdot,\cdot))$ belongs to the Banach algebra $\mathcal S(L_2(Q))$.
\end{lemma}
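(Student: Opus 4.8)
The plan is to start from the integral representation \eqref{eq:element} of $F$ and substitute the Gaussian paths $\mathcal Y_h(x;\cdot,\cdot)$ directly. This gives
\[
G_h(x)=F(\mathcal Y_h(x;\cdot,\cdot))=\int_{L_2(Q)}\exp\{i\langle{u,\mathcal Y_h(x;\cdot,\cdot)}\rangle\}df(u)
\]
for s-a.e. $x\in C_0(Q)$. The first key step is to apply the preceding lemma, namely \eqref{eq:pwz-basic}: for each fixed $u\in L_2(Q)$ we have $\langle{u,\mathcal Y_h(x;\cdot,\cdot)}\rangle=\langle{uh,x}\rangle$ for s-a.e. $x$. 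One must be slightly careful here because the exceptional null set depends a priori on $u$; so I would invoke a Fubini-type argument on the product of $C_0(Q)$ (with scale-invariant measure, or simply $m_{\mathrm y}$ after fixing a scaling) and $L_2(Q)$ (with $|f|$) to conclude that for s-a.e. $x$ the identity $\langle{u,\mathcal Y_h(x;\cdot,\cdot)}\rangle=\langle{uh,x}\rangle$ holds for $|f|$-a.e. $u$, which is enough to rewrite the integrand inside $\int_{L_2(Q)}\,df(u)$.

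The second step is to transport the measure $f$ through the multiplication-by-$h$ map. Define $\psi_h:L_2(Q)\to L_2(Q)$ by $\psi_h(u)=uh$; since $h$ is of bounded variation and hence bounded on $Q$, $\psi_h$ is a bounded linear (hence Borel-measurable) operator on $L_2(Q)$, so the pushforward $f_h:=f\circ\psi_h^{-1}$ is a well-defined complex Borel measure on $\mathcal B(L_2(Q))$ with $\|f_h\|\le\|f\|<\infty$, i.e. $f_h\in\mathcal M(L_2(Q))$. Then by the change-of-variables formula for pushforward measures,
\[
G_h(x)=\int_{L_2(Q)}\exp\{i\langle{uh,x}\rangle\}df(u)=\int_{L_2(Q)}\exp\{i\langle{w,x}\rangle\}df_h(w)
\]
for s-a.e. $x\in C_0(Q)$, which exhibits $G_h$ in exactly the form \eqref{eq:element} with associated measure $f_h\in\mathcal M(L_2(Q))$. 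Hence $G_h\in\mathcal S(L_2(Q))$, as claimed.

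I expect the main obstacle to be the measurability bookkeeping in the first step: making rigorous the passage from ``for each $u$, \eqref{eq:pwz-basic} holds s-a.e.\ in $x$'' to ``for s-a.e.\ $x$, \eqref{eq:pwz-basic} holds for $|f|$-a.e.\ $u$,'' which requires checking joint measurability of $(u,x)\mapsto\langle{u,\mathcal Y_h(x;\cdot,\cdot)}\rangle-\langle{uh,x}\rangle$ and then applying Tonelli/Fubini on $C_0(Q)\times L_2(Q)$ with respect to $m_{\mathrm y}\times|f|$, together with the usual scaling argument to handle scale-invariant (rather than ordinary) null sets. The boundedness of $h$ (from $h\in\mathrm{Supp}_{BV}(Q)$) is what guarantees $uh\in L_2(Q)$ and that $\psi_h$ is bounded; this is precisely the place where the hypothesis $h\in\mathrm{Supp}_{BV}(Q)$ rather than merely $h\in\mathrm{Supp}_2(Q)$ is used (consistent with the remark at the end of Section~\ref{sec:pre} that $\mathcal Y_h(x;\cdot,\cdot)\in C_0(Q)$ in this case). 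The change-of-variables and norm estimate in the second step are then routine.
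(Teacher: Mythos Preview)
Your proposal is correct and follows essentially the same approach as the paper: define the multiplication map $\psi_h(u)=uh$ (the paper calls it $\Phi_h$), push $f$ forward through it, and combine the change-of-variables formula with \eqref{eq:pwz-basic} to obtain the representation $G_h(x)=\int_{L_2(Q)}\exp\{i\langle w,x\rangle\}\,d(f\circ\psi_h^{-1})(w)$. You are in fact more explicit than the paper on two points the paper glosses over---why $\psi_h$ is continuous (boundedness of $h$ from the BV hypothesis) and the Fubini/null-set bookkeeping needed to pass from ``for each $u$, s-a.e.\ $x$'' to ``for s-a.e.\ $x$, $|f|$-a.e.\ $u$''---while the paper handles scale-invariance by simply repeating the computation for each $\rho>0$.
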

\begin{proof}
Let $\Phi_h: L_2(Q)\to L_2(Q)$ be given by $\Phi_h(u)=uh$, the  pointwise multiplication of $u$ and $h$ in $L_2(Q)$.
Then $\Phi_h$ is easily seen to be continuous and so is Borel measurable.
Hence $f_{\Phi_h}\equiv f\circ \Phi_h^{-1}$ is in $\mathcal M(L_2(Q))$.
In addition, for each $\rho>0$, using the change of variables theorem \cite[p.163]{Halmos}
and \eqref{eq:pwz-basic}, it follows that   for a.e. $x$ in $C_0(Q)$,
\[
\begin{aligned}
 \int_{L_2(Q)}\exp\{i\rho \langle{u,x}\rangle\}df_{\Phi_h}(u) 
&=\int_{L_2(Q)}\exp\{i\rho \langle{u,x}\rangle\}d(f\circ  \Phi_h^{-1})(u)\\
&=\int_{L_2(Q)}\exp\{i\rho \langle{\Phi_h(u),x}\rangle\}d f (u)\\
&=\int_{L_2(Q)}\exp\{i\rho \langle{u h,x}\rangle\}d f (u)\\
&=\int_{L_2(Q)}\exp\{i\rho \langle{u  ,\mathcal Y_h(x;\cdot,\cdot)}\rangle\}d f (u)\\
&= F(\rho \mathcal Y_h(x;\cdot,\cdot))\\
&=F_h(\rho x)
\end{aligned}
\]
as desired.
 \qed\end{proof}

\par
We first provide the existence theorem of the  generalized 
analytic  Yeh--Feynman integral of the functionals in $\mathcal  S(L_2(Q))$.

\renewcommand{\thetheorem}{\thesection.3}
\begin{theorem} \label{prop:F-int}
Let $F \in \mathcal  S(L_2(Q))$ be given by   \eqref{eq:element}.
Then for all  $h\in \mathrm{Supp}_{BV}(Q)$ and any nonzero real number  $q$,  
the analytic  $\mathcal Y_h$-Yeh--Feynman  integral, $I_h^{\mathrm{an.}{yf}_{q}}[F]$  of $F$ 
exists and is given by the formula
\begin{equation}\label{eq:Feynman-int}
I_h^{\mathrm{an.}{yf}_{q}}[F]
=\int_{L_2(Q)}\exp\bigg\{-\frac{i}{2q} \|u h\|_2^2  \bigg\} df(u).
\end{equation}
\end{theorem}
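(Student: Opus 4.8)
The plan is to reduce the analytic Yeh--Feynman integral of $F(\mathcal Y_h(x;\cdot,\cdot))$ to a one-dimensional computation by exploiting the Banach algebra structure exhibited in the preceding lemmas. First I would invoke the second lemma to note that $G_h(x) = F(\mathcal Y_h(x;\cdot,\cdot))$ lies in $\mathcal S(L_2(Q))$ with associated measure $f_{\Phi_h} = f \circ \Phi_h^{-1}$, where $\Phi_h(u) = uh$. Thus it suffices to compute the analytic Yeh--Wiener integral of an arbitrary element of $\mathcal S(L_2(Q))$; unwinding the change of variables at the end will reintroduce $\|uh\|_2^2$ in place of $\|u\|_2^2$.

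The core step is to evaluate $J_{G_h}(h_0;\lambda) = I_{h_0,x}[G_h(\lambda^{-1/2}\mathcal Y_{h_0}(x;\cdot,\cdot))]$ for $\lambda > 0$ (taking $h_0 \in \mathrm{Supp}_{BV}(Q)$ fixed, say the trivial kernel giving ordinary Yeh--Wiener integration, or directly working with $I_h[\cdot]$). Writing $G_h$ via its Fehr\'echet-type representation and applying Fubini for the (genuine, countably additive) product of $m_{\mathrm y}$ and $|f_{\Phi_h}|$ — justified since $|\exp\{i\lambda^{-1/2}\langle u,x\rangle\}| \le 1$ and $f_{\Phi_h}$ has finite total variation — one gets
\[
J_F(h;\lambda) = \int_{L_2(Q)} \left( \int_{C_0(Q)} \exp\{i\lambda^{-1/2}\langle u h, x\rangle\}\, dm_{\mathrm y}(x) \right) df(u),
\]
using Lemma with \eqref{eq:pwz-basic} to rewrite $\langle u,\mathcal Y_h(x;\cdot,\cdot)\rangle = \langle uh,x\rangle$. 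The inner integral is precisely \eqref{eq:w-int-f-complex} with $\alpha = \lambda^{-1/2}$ and $v = uh$, giving $\exp\{-\tfrac{1}{2\lambda}\|uh\|_2^2\}$. Hence $J_F(h;\lambda) = \int_{L_2(Q)} \exp\{-\tfrac{1}{2\lambda}\|uh\|_2^2\}\, df(u)$ for $\lambda > 0$.

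Next I would check that the right-hand side extends to an analytic function of $\lambda$ on $\mathbb C_+$: for $\operatorname{Re}\lambda > 0$ the integrand is bounded in modulus by $1$ (since $\operatorname{Re}(1/\lambda) > 0$), it is analytic in $\lambda$ for each fixed $u$, and dominated convergence (or Morera's theorem together with Fubini over a triangle) gives analyticity of the integral; uniqueness of analytic continuation then identifies this with $I_h^{\mathrm{an.}yw_\lambda}[F]$. Finally I would take the limit $\lambda \to -iq$ through $\mathbb C_+$: again the integrand is dominated by $1$ and converges pointwise to $\exp\{-\tfrac{i}{2q}\|uh\|_2^2\}$ (note $1/(-iq) = i/q$, and one must be careful with the sign so that $-\tfrac{1}{2}\cdot\tfrac{1}{-iq} = -\tfrac{i}{2q}$... rather $\tfrac{i}{2q}$; the stated formula has $-\tfrac{i}{2q}$, so I would track the convention $\lambda \to -iq$ carefully), so by dominated convergence $I_h^{\mathrm{an.}yf_q}[F] = \int_{L_2(Q)} \exp\{-\tfrac{i}{2q}\|uh\|_2^2\}\, df(u)$, which is \eqref{eq:Feynman-int}.

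The main obstacle I anticipate is purely bookkeeping rather than conceptual: justifying the interchange of integration order on $C_0(Q) \times L_2(Q)$ cleanly (which requires knowing $\langle u h, x\rangle$ is jointly measurable in $(u,x)$, following from the PWZ construction and continuity of $\Phi_h$), and keeping the sign and branch conventions consistent through the analytic continuation and the Feynman limit so that the exponent lands on $-\tfrac{i}{2q}$ as stated. Everything else — the Banach-algebra membership, the Gaussian integral formula, and the dominated-convergence arguments for analyticity and for the limit — is supplied by the lemmas and facts (i)--(v) already established.
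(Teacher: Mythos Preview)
Your proposal is correct and follows essentially the same route as the paper: compute $J_F(h;\lambda)$ for $\lambda>0$ via Fubini, \eqref{eq:pwz-basic}, and \eqref{eq:w-int-f-complex}; extend analytically to $\mathbb C_+$ using the bound $|\exp\{-\|uh\|_2^2/(2\lambda)\}|\le 1$ together with Morera's theorem; then pass to the Feynman limit by dominated convergence. The opening detour through the lemma that $G_h\in\mathcal S(L_2(Q))$ is unnecessary (the paper proceeds directly), and your sign check is fine: $-\tfrac{1}{2\lambda}\big|_{\lambda=-iq}=\tfrac{1}{2iq}=-\tfrac{i}{2q}$.
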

\begin{proof}
Using \eqref{eq:element}, the usual Fubini theorem, \eqref{eq:pwz-basic},  
and  \eqref{eq:w-int-f-complex}, it follows  that for all $\lambda>0$,
\[
\begin{aligned}
J_F(h;\lambda) 
&=\int_{C_0(Q)} F(\lambda^{-1/2}\mathcal Y_h(x;\cdot,\cdot))dm_{\mathrm{y}}(x)\\
&=\int_{L_2(Q)}\bigg[\int_{C_0(Q)} \exp\big\{i\lambda^{-1/2}\langle{u h,x}\rangle  
\big\}d m_{\mathrm{y}} \bigg]df(u) \\
&= \int_{L_2(Q)}  \exp\bigg\{-\frac{1}{2\lambda} \|u h\|_2^2 \bigg\}  df(u).
\end{aligned}
\]
Now let 
\[
J_F^*(h;\lambda)=\int_{L_2(Q)}\exp\bigg\{-\frac{1}{2\lambda}\|u h\|_2^2\bigg\}df(u)
\]
for $\lambda \in \mathbb C_+$. Then $J_F^*(h;\lambda)=J_F(h;\lambda)$ 
for all $\lambda>0$ and 
\[
|J_F^*(h;\lambda)|
\le 
\int_{L_2(Q)}\bigg|\exp\bigg\{-\frac{\|u h\|_2^2}{2\lambda}\bigg\}\bigg|d|f|(u)
\le 
\int_{L_2(Q)}d|f|(u)= \|f\|<+\infty
\] 
for all $\lambda\in\mathbb C_+$, since $\textrm{Re}(1/\lambda)>0$. 
Thus,  applying the dominated convergence theorem,  
we see that  $J_F^*(h;\lambda)$  is continuous on  $\widetilde{\mathbb C}_+$.  Also, because 
$\phi(\lambda)\equiv  \exp\{- \|u h\|_2^2/(2\lambda)  \}$
is analytic on $\mathbb C_+$, applying the usual Fubini theorem and the Cauchy integration theorem  
it follows that
\[
\int_{\triangle} J_F^*(h;\lambda) d \lambda
=\int_{L_2(Q)} \int_{\triangle}\phi(\lambda) d \lambda df(u)=0
\]
for all rectifiable simple closed curve $\triangle$ lying in $\mathbb C_+$. 
Thus by the Morera theorem,  $J_F^*(h;\lambda)$ is  analytic on $\mathbb C_+$.
Therefore the analytic $\mathcal Y_h$-Yeh--Wiener  integral 
$I_h^{\mathrm{an},yw_{\lambda}}[F]=J_F^*(h;\lambda)$
exists. 
Finally, applying the dominated convergence theorem it follows  that  
$I_h^{\mathrm{an.}{yf}_{q}}[F]=\lim\limits_{
\substack{\lambda\to -iq \\  \lambda\in \mathbb C_+}}I_h^{\mathrm{an.}{yw}_{\lambda}}[F]$
is given by the right-hand  side of \eqref{eq:Feynman-int}.
 \qed\end{proof}

\section{Fubini theorems for the generalized analytic Yeh--Feynman integral}

\par
In this  section we study Fubini theorems  for the iterated  $\mathcal Y_h$-Yeh--Feynman 
integrals. In \cite{HSS01-F}, Huffman, Skoug and Storvick presented   a Fubini 
theorem involving the iterated analytic Feynman  integrals for  functionals on 
the classical Wiener space $C_0[0,T]$.
The Fubini theorem  can be extended to the $\mathcal Y_h$-Yeh--Feynman 
integral  on the Yeh--Wiener space $C_0(Q)$ as follows:

\renewcommand{\thetheorem}{\thesection.1}
\begin{theorem}\label{multi-single-q}
Let $F\in \mathcal  S(L_2(Q))$  be given by equation \eqref{eq:element}  
and let $\{q_1,q_2,\ldots$, $q_n\}$ be a set of nonzero real numbers with 
\[ 
\frac{1}{q_1}+\frac{1}{q_2}+\cdots+\frac{1}{q_k}\ne0
\]
for each $k\in\{2,\ldots, n\}$. Then for any   function $h$ in $\mathrm{Supp}_{BV}(Q)$,
it follows that
\begin{equation}\label{eq:Q-iter} 
\begin{aligned}
&
I_{h,x_{n}}^{\mathrm{an.}{yf}_{q_n}}\bigg[I_{h,x_{n-1}}^{\mathrm{an.}{yf}_{q_{n-1}}}\bigg[
   \cdots\bigg[I_{h,x_{2}}^{\mathrm{an.}{yf}_{q_2}}\bigg[I_{h,x_{1}}^{\mathrm{an.}{yf}_{q_1}}
\bigg[F\bigg(\sum_{j=1}^n\mathcal Y_h(x_j,\cdot)\bigg)\bigg]\bigg]\bigg]\cdots\bigg]\bigg] \\
&
=I_{h,x}^{\mathrm{an.}{yf}_{\alpha_n}}[F(\mathcal Y_h(x;\cdot,\cdot))] ,
\end{aligned}
\end{equation}
where 
\begin{equation}\label{eq:Q-iter-parameter}
\alpha_n=\bigg(\frac{1}{q_1}+\frac{1}{q_2}+\cdots+\frac{1}{q_n}\bigg)^{-1}.
\end{equation}
\end{theorem}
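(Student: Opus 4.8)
The plan is to reduce the $n$-fold statement to the case $n=2$ by induction on $n$, and to settle $n=2$ by two successive applications of Theorem~\ref{prop:F-int}. First I would record a mild extension of the second of the two lemmas preceding Theorem~\ref{prop:F-int}: if $F\in\mathcal S(L_2(Q))$ is represented as in \eqref{eq:element} and $w\in C_0(Q)$ is such that $\langle u,w\rangle$ is defined for $|f|$-a.e.\ $u$, then $w'\mapsto F(w'+w)$ again lies in $\mathcal S(L_2(Q))$, with associated measure $dg(u)=\exp\{i\langle u,w\rangle\}\,df(u)$ (finite, with $\|g\|\le\|f\|$, since $|\exp\{i\langle u,w\rangle\}|=1$). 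A routine Fubini argument for the PWZ integral, together with \eqref{eq:pwz-basic}, shows that $\langle u,\mathcal Y_h(x_j;\cdot,\cdot)\rangle=\langle uh,x_j\rangle$ holds simultaneously for $|f|$-a.e.\ $u$ and s-a.e.\ $x_j$, so that the substitution $w=\mathcal Y_h(x_j;\cdot,\cdot)$ is legitimate for s-a.e.\ value of $x_j$ and all the intermediate functionals appearing below are scale-invariant measurable members of $\mathcal S(L_2(Q))$. This is the only place at which a genuinely measure-theoretic verification is required.

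For the base case $n=2$, I would show that whenever $p,q$ are nonzero reals with $\tfrac1p+\tfrac1q\ne0$ and $\gamma=\big(\tfrac1p+\tfrac1q\big)^{-1}$,
\[
I_{h,x_2}^{\mathrm{an.}{yf}_{q}}\Big[I_{h,x_1}^{\mathrm{an.}{yf}_{p}}\big[F\big(\mathcal Y_h(x_1;\cdot,\cdot)+\mathcal Y_h(x_2;\cdot,\cdot)\big)\big]\Big]=I_{h,x}^{\mathrm{an.}{yf}_{\gamma}}\big[F(\mathcal Y_h(x;\cdot,\cdot))\big].
\]
Freezing $x_2$, the integrand is $F_1(\mathcal Y_h(x_1;\cdot,\cdot))$ with $F_1\in\mathcal S(L_2(Q))$ represented by $dg_1(u)=\exp\{i\langle uh,x_2\rangle\}\,df(u)$, so by Theorem~\ref{prop:F-int} the inner integral equals $\int_{L_2(Q)}\exp\{-\tfrac{i}{2p}\|uh\|_2^2\}\exp\{i\langle uh,x_2\rangle\}\,df(u)$. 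As a functional of $x_2$ this is $F_2(\mathcal Y_h(x_2;\cdot,\cdot))$ with $dg_2(u)=\exp\{-\tfrac{i}{2p}\|uh\|_2^2\}\,df(u)$ (a finite measure, the extra factor having modulus one), and a second application of Theorem~\ref{prop:F-int} gives $\int_{L_2(Q)}\exp\{-\tfrac{i}{2}(\tfrac1p+\tfrac1q)\|uh\|_2^2\}\,df(u)=\int_{L_2(Q)}\exp\{-\tfrac{i}{2\gamma}\|uh\|_2^2\}\,df(u)$. Since $\tfrac1p+\tfrac1q\ne0$ makes $\gamma$ a nonzero real, Theorem~\ref{prop:F-int} used once more with parameter $\gamma$ identifies this last expression with $I_{h,x}^{\mathrm{an.}{yf}_{\gamma}}[F(\mathcal Y_h(x;\cdot,\cdot))]$.

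Then I would run the induction on $n$. Granting the statement for $n$, consider the $(n+1)$-fold iterated integral. Freezing $x_{n+1}$, set $\widetilde F(w)=F\big(w+\mathcal Y_h(x_{n+1};\cdot,\cdot)\big)\in\mathcal S(L_2(Q))$ and note that $\sum_{j=1}^{n+1}\mathcal Y_h(x_j;\cdot,\cdot)=\big(\sum_{j=1}^{n}\mathcal Y_h(x_j;\cdot,\cdot)\big)+\mathcal Y_h(x_{n+1};\cdot,\cdot)$, so the integrand is $\widetilde F\big(\sum_{j=1}^{n}\mathcal Y_h(x_j;\cdot,\cdot)\big)$. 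By the induction hypothesis — which uses $\sum_{i=1}^{k}1/q_i\ne0$ for $k=2,\dots,n$ — the innermost $n$ integrals collapse to $I_{h,x}^{\mathrm{an.}{yf}_{\alpha_n}}\big[F(\mathcal Y_h(x;\cdot,\cdot)+\mathcal Y_h(x_{n+1};\cdot,\cdot))\big]$ with $\alpha_n=(\sum_{i=1}^{n}1/q_i)^{-1}$, well defined by the $k=n$ hypothesis. What is left is a two-fold iterated integral — over the newly produced variable with parameter $\alpha_n$, and over $x_{n+1}$ with parameter $q_{n+1}$ — to which the base case applies because $1/\alpha_n+1/q_{n+1}=\sum_{i=1}^{n+1}1/q_i\ne0$ is exactly the $k=n+1$ hypothesis; it produces $I_{h,x}^{\mathrm{an.}{yf}_{\beta}}[F(\mathcal Y_h(x;\cdot,\cdot))]$ with $\beta=(1/\alpha_n+1/q_{n+1})^{-1}=(\sum_{i=1}^{n+1}1/q_i)^{-1}=\alpha_{n+1}$, completing the step.

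I expect the only real obstacle to lie in the first paragraph: making sure that every functional obtained after a partial integration again belongs to $\mathcal S(L_2(Q))$ for s-a.e.\ choice of the remaining variables — so that Theorem~\ref{prop:F-int} may legitimately be reapplied — and that its representing measure stays finite, which here is automatic because each newly introduced factor has modulus one. Everything beyond that is the propagation of the elementary identity $\tfrac1p+\tfrac1q$ for the parameters through the two-fold reduction.
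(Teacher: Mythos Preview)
Your argument is correct. The paper, however, does not actually supply a proof of this theorem: it is stated immediately after recalling the Huffman--Skoug--Storvick Fubini theorem on $C_0[0,T]$ and is presented simply as its extension to the $\mathcal Y_h$-Yeh--Feynman setting, with no further justification. So there is nothing to compare against beyond the implicit claim that the classical proof carries over.

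Your write-up is precisely the natural way to make that implicit claim explicit. The two-step reduction --- compute the inner integral via Theorem~\ref{prop:F-int} to obtain a new element of $\mathcal S(L_2(Q))$ whose representing measure differs from $f$ only by a unimodular density, then apply Theorem~\ref{prop:F-int} again --- mirrors exactly how the paper later handles the ``different $h$'' Fubini theorem (Lemma~\ref{lemma-funini-basic} and Theorem~\ref{thm:iter-gfft}), and your induction is the obvious propagation. Your first paragraph correctly isolates the only genuine point of care: the s-a.e.\ simultaneous validity of \eqref{eq:pwz-basic} in $(u,x_j)$ needed so that each frozen-variable functional lands back in $\mathcal S(L_2(Q))$ with a finite representing measure. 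Once that is in hand, the partial-sum hypothesis $\sum_{i\le k}1/q_i\ne0$ is used exactly where you indicate --- at $k=n$ to make $\alpha_n$ a legitimate Feynman parameter in the inductive step, and at $k=n+1$ to invoke the base case.
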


\par
Equation  \eqref{eq:Q-iter} 
tells us that an iterated  analytic $\mathcal Y_h$-Yeh--Feynman integral can be reduced to a single  
analytic $\mathcal Y_h$-Yeh--Feynman integral.
In  this section  we establish that the iterated  generalized Yeh--Feynman integrals associated 
with different Gaussian processes  also can be reduced to a single  generalized  Yeh--Feynman integral.

\par
To obtain our Fubini theorems 
for the Yeh--Feynman integrals associated with Gaussian processes (see Theorem \ref{thm:iter-gfft} below) 
we adopt the following conventions.
Let $h_1$ and $h_2$ be nonzero functions in $L_2(Q)$. 
Then there exists a nonzero function  $\mathbf{s}$  in $L_2(Q)$ 
such that
\begin{equation}\label{eq:fn-rot}
\mathbf{s}^2(s,t)=h_1^2(s,t)+h_2^2(s,t)
\end{equation}
for $m_L^2$-a.e. $(s,t)\in Q$.
Note that  the function `$\mathbf{s}$' satisfying \eqref{eq:fn-rot} 
is not unique. We will use the symbol  $\mathbf{s}(h_1,h_2)$ for the functions 
`$\mathbf{s}$' that satisfy \eqref{eq:fn-rot} above. Inductively, given a 
set  $\mathcal H=\{h_1,\ldots, h_n\}$  of nonzero functions in $L_2(Q)$, 
let  
\[
\mathbf{s}(\mathcal H)\equiv \mathbf{s}(h_1,h_2,\ldots,h_n)
\]
be the set of functions $\mathbf{s}$ which satisfy the relation 
\begin{equation}\label{eq:fn-rot-ind}
\mathbf{s}^2(s,t)=h_1^2(s,t)+\cdots+h_n^2(s,t) 
\end{equation}
for $m_L^2$-a.e. $(s,t)\in Q$.
We also  note that if  the functions $h_1,\ldots, h_n$ are in $\mathrm{Supp}_{BV}(Q)$,
then we can take  $\mathbf{s}(\mathcal H)$ to be in $\mathrm{Supp}_{BV}(Q) $.
By an induction argument we see that 
\[\label{eq:link}
\mathbf{s}(\mathbf{s}(h_1,h_2,\ldots,h_{k-1}),h_k)
=\mathbf{s}(h_1,h_2,\ldots,h_k) 
\]
for all $k\in\{2,\ldots,n\}$.

\par
In our next lemma we obtain a Fubini theorem for the iterated Yeh--Wiener 
integral  of functionals $F$  in $\mathcal S(L_2(Q))$. 

\renewcommand{\thelemma}{\thesection.2}
\begin{lemma} \label{lemma-funini-basic}
Let $h_1$ and $h_2$ be   functions in $\mathrm{Supp}_{BV}(Q)$ 
and let $F\in \mathcal S(L_2(Q))$  be given by equation \eqref{eq:element}. 
Then for all $\alpha$ and  $\beta$  in $\mathbb R$,
\begin{equation}\label{eq:Fubini-initial}
\begin{aligned}
&\int_{C_0(Q)}\bigg[\int_{C_0(Q)}
F(\alpha \mathcal Y_{h_1}(x_1;\cdot,\cdot)+\beta \mathcal Y_{h_2}(x_2;\cdot,\cdot)) 
dm_{\mathrm{y}}(x_1)\bigg]  dm_{\mathrm{y}}(x_2)\\
&=\int_{C_0(Q)}\bigg[\int_{C_0(Q)}
F(\alpha \mathcal Y_{h_1}(x_1;\cdot,\cdot)+\beta \mathcal Y_{h_2}(x_2;\cdot,\cdot)) 
 dm_{\mathrm{y}}(x_2)\bigg]  dm_{\mathrm{y}}(x_1). 
\end{aligned} 
\end{equation}
In  addition, both expressions in \eqref{eq:Fubini-initial} are 
given by the expression
\begin{equation}\label{eq:Fubini-initial-evalu}
\int_{L_2(Q)}
\exp\bigg\{ - \frac{\alpha^2}{2}\|uh_1\|_2^2 
 - \frac{\beta^2}{2}\|uh_2\|_2^2 \bigg\} df(u).
\end{equation}
\end{lemma}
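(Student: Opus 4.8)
The plan is to substitute the Fourier--Stieltjes representation \eqref{eq:element} into each side and collapse everything onto the elementary Gaussian formula \eqref{eq:w-int-f-complex}. First I note that since $h_1,h_2\in\mathrm{Supp}_{BV}(Q)$, for \emph{every} pair $(x_1,x_2)\in C_0(Q)\times C_0(Q)$ the paths $\mathcal Y_{h_1}(x_1;\cdot,\cdot)$ and $\mathcal Y_{h_2}(x_2;\cdot,\cdot)$ lie in $C_0(Q)$, so $\alpha\mathcal Y_{h_1}(x_1;\cdot,\cdot)+\beta\mathcal Y_{h_2}(x_2;\cdot,\cdot)\in C_0(Q)$ and the integrand $F\big(\alpha\mathcal Y_{h_1}(x_1;\cdot,\cdot)+\beta\mathcal Y_{h_2}(x_2;\cdot,\cdot)\big)$ is well defined; its joint measurability in $(x_1,x_2)$ becomes transparent once \eqref{eq:element} is inserted. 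Using then the key Lemma \eqref{eq:pwz-basic} together with linearity of the PWZ integral in its path argument — which is obtained by passing to the limit in the Riemann--Stieltjes approximations that define $\langle u,\cdot\rangle$ — one gets, for each fixed $u\in L_2(Q)$ and for s-a.e.\ $(x_1,x_2)$,
\[
F\big(\alpha\mathcal Y_{h_1}(x_1;\cdot,\cdot)+\beta\mathcal Y_{h_2}(x_2;\cdot,\cdot)\big)=\int_{L_2(Q)}\exp\big\{i\alpha\langle u h_1,x_1\rangle+i\beta\langle u h_2,x_2\rangle\big\}\,df(u).
\]
Here $uh_1,uh_2\in L_2(Q)$ because $h_1,h_2$, being of bounded variation on the compact rectangle $Q$, are bounded.

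Next I apply the ordinary Fubini theorem on the finite product measure $|f|\times m_{\mathrm{y}}\times m_{\mathrm{y}}$ on $L_2(Q)\times C_0(Q)\times C_0(Q)$: the displayed integrand is jointly measurable and bounded in modulus by $1$, hence integrable, so the three integrations may be carried out in any order. In particular, both iterated Yeh--Wiener integrals appearing in \eqref{eq:Fubini-initial} equal
\[
\int_{L_2(Q)}\bigg[\int_{C_0(Q)}e^{i\alpha\langle uh_1,x_1\rangle}\,dm_{\mathrm{y}}(x_1)\bigg]\bigg[\int_{C_0(Q)}e^{i\beta\langle uh_2,x_2\rangle}\,dm_{\mathrm{y}}(x_2)\bigg]\,df(u),
\]
since, for fixed $u$, the integrand factors as a function of $x_1$ times a function of $x_2$. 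Finally, evaluating each inner Wiener integral by \eqref{eq:w-int-f-complex} (with $v=uh_1$ and real exponent $\alpha$, resp.\ $v=uh_2$ and real exponent $\beta$) turns this expression into \eqref{eq:Fubini-initial-evalu}, which simultaneously establishes the commutation \eqref{eq:Fubini-initial} and identifies the common value.

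The routine parts are the Gaussian evaluation and the modulus bound that makes the ordinary Fubini theorem applicable. The step that requires the most care is the passage from $F\big(\alpha\mathcal Y_{h_1}(x_1;\cdot,\cdot)+\beta\mathcal Y_{h_2}(x_2;\cdot,\cdot)\big)$ to a product-type exponential: I must know that $\langle u,\cdot\rangle$ is additive along the specific paths $\alpha\mathcal Y_{h_1}(x_1;\cdot,\cdot)$ and $\beta\mathcal Y_{h_2}(x_2;\cdot,\cdot)$, and that the exceptional null sets coming from \eqref{eq:pwz-basic} can be controlled jointly on the product space, i.e.\ that the set of $(x_1,x_2)$ for which the identity fails is $(m_{\mathrm{y}}\times m_{\mathrm{y}})$-null. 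This is handled by observing that for each $u$ the identity holds off a scale-invariant null set in $x_1$ (uniformly in $x_2$) and off one in $x_2$, and by invoking completeness of the product measure; after that the argument is mechanical.
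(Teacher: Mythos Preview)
Your proof is correct and follows essentially the same route as the paper: insert the representation \eqref{eq:element}, use \eqref{eq:pwz-basic} and linearity of the PWZ integral to reduce to a product of exponentials, invoke the ordinary Fubini theorem via the uniform bound $|\exp\{\cdots\}|\le 1$ (the paper phrases this as $|F|\le\|f\|$), and evaluate each factor by \eqref{eq:w-int-f-complex}. The only difference is cosmetic: the paper first bounds $|F|$ to justify swapping the two $m_{\mathrm y}$-integrals and then computes, whereas you run a single Fubini on the triple product $|f|\times m_{\mathrm y}\times m_{\mathrm y}$; your extra care with the null sets arising from \eqref{eq:pwz-basic} is a welcome addition that the paper leaves implicit.
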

\begin{proof}
Using \eqref{eq:element} and \eqref{eq:pwz-basic},     it follows that
\[
\int_{C_0(Q)}| F(\rho \mathcal Y_h(x;\cdot,\cdot))| d m_{\mathrm y}(x)
 \le\int_{C_0(Q)}\|f \| d m_{\mathrm y}(x) 
 =\|f \| 
 < +\infty 
\]
for each $\rho>0$.
Hence by the usual Fubini theorem, 
we have  equation \eqref{eq:Fubini-initial} above. 
Furthermore,  using the usual Fubini theorem, \eqref{eq:pwz-basic},
and  \eqref{eq:w-int-f-complex}, 
it follows that for all $\alpha$ and $\beta$  in $\mathbb R$,
\[
\begin{aligned}
&\int_{C_0(Q)} \bigg[\int_{C_0(Q)} 
F(\alpha \mathcal Y_{h_1}(x_1;\cdot,\cdot)+\beta \mathcal Y_{h_2}(x_2;\cdot,\cdot))
dm_{\mathrm y}(x_1) \bigg]dm_{\mathrm y}(x_2)\\
&=\int_{L_2(Q)} 
\bigg[\int_{C_0(Q)}\exp\big\{i\alpha\langle{uh_1,x_1}\rangle\big\}dm_{\mathrm y}(x_1)\bigg]\\
&\qquad\quad\times
\bigg[\int_{C_0(Q)}\exp\big\{i\beta\langle{uh_2, x_2}\rangle\big\}dm_{\mathrm y}(x_2)\bigg]df(u)\\
&=\int_{L_2(Q)}  
\exp \bigg\{ -\frac{\alpha^2}{2}\|uh_1\|_2^2\bigg\}
\exp \bigg\{ -\frac{\beta^2}{2}\|uh_2\|_2^2\bigg\}df(u)\\
&=\int_{L_2(Q)} \exp \bigg\{ -\frac{\alpha^2}{2}\|uh_1\|_2^2 
-\frac{\beta^2}{2}\|uh_2\|_2^2\bigg\}df(u),
\end{aligned}   
\]
as desired.
 \qed\end{proof}

\renewcommand{\thetheorem}{\thesection.3}

\begin{theorem} \label{thm:iter-gfft}
Let $h_1$,  $h_2$, and  $F$  be as in Lemma \ref{lemma-funini-basic}.
Then,  for any nonzero real number $q$, the iterated  Yeh--Feynman integral, 
$I_{h_2}^{\mathrm{an.}{yf}_{q}}[I_{h_1 }^{\mathrm{an.}{yf}_{q}}[F]]$
of $F$ exists and is given by the formula
\begin{equation}\label{eq:fubini-01}
\begin{aligned}
&
I_{h_2,x_2}^{\mathrm{an.}{yf}_{q}}\big[I_{h_1,x_1}^{\mathrm{an.}{yf}_{q}}
\big[F\big(\mathcal Y_{h_1}(x_1;\cdot,\cdot)+\mathcal Y_{h_2}(x_2;\cdot,\cdot)\big)\big]\big] \\
&
=\int_{L_2(Q)}\exp\bigg\{   -\frac{i}{2q}\sum_{j=1}^2\|uh_j\|_2^2\bigg\}df(u).
\end{aligned}
\end{equation}
Furthermore, it follows that
\[
I_{h_2,x_2}^{\mathrm{an.}{yf}_{q}}\big[I_{h_1,x_1}^{\mathrm{an.}{yf}_{q}}
\big[F\big(\mathcal Y_{h_1}(x_1;\cdot,\cdot)+\mathcal Y_{h_2}(x_2;\cdot,\cdot)\big)\big]\big] 
 = I_{\mathbf{s}(h_1,h_2),x}^{\mathrm{an.}{yf}_{q}}    
 \big[F\big(\mathcal Y_{\mathbf{s}(h_1,h_2)}(x;\cdot ,\cdot) \big)\big]
\]
where $\mathbf{s}(h_1,h_2)$ is a function 
in  $\mathrm{Supp}_{BV}(Q)$ 
satisfying  relation \eqref{eq:fn-rot}  above.
\end{theorem}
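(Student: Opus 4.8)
The plan is to prove the two assertions of Theorem~\ref{thm:iter-gfft} in sequence, leaning heavily on Lemma~\ref{lemma-funini-basic} and Theorem~\ref{prop:F-int}. First I would establish \eqref{eq:fubini-01}. Starting from \eqref{eq:Fubini-initial-evalu} with $\alpha=\lambda_1^{-1/2}$ and $\beta=\lambda_2^{-1/2}$ for $\lambda_1,\lambda_2>0$, the inner $\mathcal Y_{h_1}$-Yeh--Wiener integral of $F(\lambda_1^{-1/2}\mathcal Y_{h_1}(x_1;\cdot,\cdot)+\beta\mathcal Y_{h_2}(x_2;\cdot,\cdot))$ --- note that $\beta\mathcal Y_{h_2}(x_2;\cdot,\cdot)$ plays the role of a fixed additive perturbation inside the functional, so the computation is really that of $J_{F_{x_2}}(h_1;\lambda_1)$ for the translated functional $F_{x_2}(\cdot)=F(\cdot+\beta\mathcal Y_{h_2}(x_2;\cdot,\cdot))$ which is again in $\mathcal S(L_2(Q))$ by Lemma~2 --- evaluates to $\int_{L_2(Q)}\exp\{-\tfrac{1}{2\lambda_1}\|uh_1\|_2^2\}\exp\{i\beta\langle uh_2,x_2\rangle\}df(u)$. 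This last expression is, as a functional of $x_2$, the element of $\mathcal S(L_2(Q))$ associated to the measure $dg(u)=\exp\{-\tfrac{1}{2\lambda_1}\|uh_1\|_2^2\}df(u)$ (which has total variation at most $\|f\|$), so by Theorem~\ref{prop:F-int} its analytic $\mathcal Y_{h_2}$-Yeh--Wiener integral exists on $\mathbb C_+$ and its analytic $\mathcal Y_{h_2}$-Yeh--Feynman integral with parameter $q$ equals $\int_{L_2(Q)}\exp\{-\tfrac{1}{2\lambda_1}\|uh_1\|_2^2-\tfrac{i}{2q}\|uh_2\|_2^2\}df(u)$. Letting $\lambda_1\to -iq$ through $\mathbb C_+$ inside the integral via dominated convergence (the integrand is bounded by $1$ in modulus, $\|f\|$ is finite) produces \eqref{eq:fubini-01}. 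I should also check at this stage that the inner analytic continuation in $\lambda_1$ is legitimate uniformly enough in $x_2$ to interchange with the outer analytic/Feynman limit; the clean way is to carry the second variable $\lambda_2$ along in parallel, deduce analyticity on $\mathbb C_+\times\mathbb C_+$ of the iterated Yeh--Wiener integral by Morera/Fubini exactly as in the proof of Theorem~\ref{prop:F-int}, and only then take both limits $\lambda_1\to -iq$, $\lambda_2\to -iq$.

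Next I would prove the ``furthermore'' identity. The right-hand side is immediate from Theorem~\ref{prop:F-int} applied with $h=\mathbf{s}(h_1,h_2)\in\mathrm{Supp}_{BV}(Q)$: it equals $\int_{L_2(Q)}\exp\{-\tfrac{i}{2q}\|u\,\mathbf{s}(h_1,h_2)\|_2^2\}df(u)$. So the whole theorem reduces to the pointwise (in $u$) identity
\[
\|u\,\mathbf{s}(h_1,h_2)\|_2^2=\|uh_1\|_2^2+\|uh_2\|_2^2,
\]
which is nothing but $\int_Q u^2(\mathbf{s}^2-h_1^2-h_2^2)\,dm_L^2=0$, true because $\mathbf{s}^2=h_1^2+h_2^2$ $m_L^2$-a.e.\ by the defining relation \eqref{eq:fn-rot}. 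Substituting this into the sum $\sum_{j=1}^2\|uh_j\|_2^2$ appearing in \eqref{eq:fubini-01} and recognizing the result as the integral computed by Theorem~\ref{prop:F-int} for the process $\mathcal Y_{\mathbf{s}(h_1,h_2)}$ completes the argument.

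I do not anticipate a deep obstacle here; the theorem is essentially a bookkeeping exercise once one has Lemma~\ref{lemma-funini-basic} and Theorem~\ref{prop:F-int}. The one point that deserves genuine care --- and where a careless write-up would have a gap --- is the justification of the two nested analytic continuations and the interchange of the inner continuation/Feynman limit with the outer one. The safe route, as indicated above, is to treat the iterated object as a single function of $(\lambda_1,\lambda_2)\in\mathbb C_+^2$: write it as $\int_{L_2(Q)}\exp\{-\tfrac{1}{2\lambda_1}\|uh_1\|_2^2-\tfrac{1}{2\lambda_2}\|uh_2\|_2^2\}df(u)$, note the integrand is jointly analytic in $(\lambda_1,\lambda_2)$ on $\mathbb C_+^2$ and dominated by $1$, apply Morera (separately in each variable) together with the usual Fubini theorem and Cauchy's theorem to conclude joint analyticity, and only afterward send $\lambda_1,\lambda_2\to -iq$ by dominated convergence. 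With that in hand, the existence claim for $I_{h_2}^{\mathrm{an.}yf_q}[I_{h_1}^{\mathrm{an.}yf_q}[F]]$ is automatic and formula \eqref{eq:fubini-01} follows, and the final identification with a single $\mathcal Y_{\mathbf{s}(h_1,h_2)}$-Yeh--Feynman integral is then just the $L_2$-norm computation above.
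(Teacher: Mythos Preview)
Your proposal is correct and, once you commit to the ``safe route'' you describe, it is essentially identical to the paper's proof: start from \eqref{eq:Fubini-initial-evalu} with $\alpha=\lambda_1^{-1/2}$, $\beta=\lambda_2^{-1/2}$, show the resulting integral is separately analytic in each variable on $\mathbb C_+\times\mathbb C_+$ via the domination $|\exp\{\cdots\}|\le 1$ and Morera, pass to the iterated Feynman limit, and then reduce the ``furthermore'' claim to the pointwise identity $\|u\,\mathbf{s}(h_1,h_2)\|_2^2=\|uh_1\|_2^2+\|uh_2\|_2^2$ coming from \eqref{eq:fn-rot}, invoking \eqref{eq:Feynman-int} with $h=\mathbf{s}(h_1,h_2)$. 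Your first sketch (inner Wiener integral, outer Feynman integral via Theorem~\ref{prop:F-int}, then $\lambda_1\to -iq$) reverses the order of the two Feynman limits relative to the definition, and you are right to flag that as the one genuine subtlety and to resolve it by the joint-analyticity argument---that is exactly what the paper does.
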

\begin{proof}
Using \eqref{eq:Fubini-initial} together with  \eqref{eq:Fubini-initial-evalu}, 
it follows  that  for all $(\lambda_1,\lambda_2)\in (0,+\infty)\times(0,+\infty)$,
\[
\begin{aligned}
&
I_{h_2,x_2}\big[I_{h_1,x_1}\big[F\big(\lambda_1^{-1/2} \mathcal Y_{h_1}(x_1;\cdot,\cdot)
+\lambda_2^{-1/2} \mathcal Y_{h_2}(x_2;\cdot,\cdot)\big)\big]\big]\\
&
=\int_{L_2(Q)}
\exp\bigg\{ - \frac{1}{2\lambda_1}\|uh_1\|_2^2 
 - \frac{1}{2\lambda_2}\|uh_2\|_2^2  \bigg\} df(u).
\end{aligned}
\]
For each $\lambda_2>0$ it can be analytically continued in $\lambda_1$
for $\lambda_1\in \mathbb C_+$, and   for each $\lambda_1>0$ it also can be analytically 
continued in $\lambda_2$ for $\lambda_2\in \mathbb C_+$, 
because for any $(\lambda_1,\lambda_2)\in\mathbb C_+\times\mathbb C_+$,
\[
\begin{aligned}
&
\bigg|\int_{L_2(Q)}
\exp\bigg\{ - \frac{1}{2\lambda_1}\|uh_1\|_2^2 
 - \frac{1}{2\lambda_2}\|uh_2\|_2^2 \bigg\} df(u)\bigg|\\
&
\le\int_{L_2(Q)}\bigg|
\exp\bigg\{ - \sum_{j=1}^2\frac{\mathrm{Re}(\lambda_j)-i\mathrm{Im}(\lambda_j)}{2|\lambda_j|^2}\|uh_j\|_2^2  \bigg\} \bigg|d|f|(u) 
 \le \|f\| 
<+\infty.
\end{aligned}
\]
 Thus we obtain the analytic continuation 
\[
\begin{aligned}
&I_{h_2,x_2}^{\mathrm{an.}{yw}_{\lambda_2}}[I_{h_1,x_1}^{\mathrm{an.}{yw}_{\lambda_1}}
\big[F\big( \mathcal Y_{h_1}(x_1;\cdot,\cdot)
+ \mathcal Y_{h_2}(x_2;\cdot,\cdot)\big)\big]\big]\\
& 
=\int_{L_2(Q)}
\exp\bigg\{ - \frac{1}{2\lambda_1}\|uh_1\|_2^2 
 - \frac{1}{2\lambda_2}\|uh_2\|_2^2 \bigg\} df(u)
\end{aligned}
\]
of $I_{h_2,x_2} [I_{h_1,x_1}[F(\lambda_1^{-1/2} \mathcal Y_{h_1}(x_1;\cdot,\cdot)
+\lambda_2^{-1/2} \mathcal Y_{h_2}(x_2;\cdot,\cdot))]]$
as a function of $(\lambda_1,\lambda_2)\in \mathbb C_+\times \mathbb C_+$,
and so it follows that
\[
\begin{aligned}
& I_{h_2,x_2}^{\mathrm{an.}{yf}_{q}}\big[I_{h_1,x_1}^{\mathrm{an.}{yf}_{q}}
\big[F \big( \mathcal Y_{h_1}(x_1;\cdot,\cdot)
+ \mathcal Y_{h_2}(x_2;\cdot,\cdot)\big)\big]\big] \\
& =\lim_{\substack{\lambda_2\to -iq\\ \lambda_2\in\mathbb C_+} }
\int_{C_0(Q)}\bigg[\lim_{\substack{\lambda_1\to -iq\\ \lambda_1\in\mathbb C_+} }\int_{C_0(Q)}
F\big(\lambda_1^{-1/2}\mathcal Y_{h_1}(x_1;\cdot,\cdot) \\
&\qquad\qquad\qquad\qquad\qquad\qquad
+\lambda_2^{-1/2} \mathcal Y_{h_2}(x_2;\cdot,\cdot)\big) 
dm_{\mathrm{y}}(x_1)\bigg]  dm_{\mathrm{y}}(x_2) \\
&= \int_{L_2(Q)}\exp\bigg\{   -\frac{i}{2q}\sum_{j=1}^2\|uh_j\|_2^2\bigg\}df(u).
\end{aligned}
\]
Next using \eqref{eq:fn-rot}, we observe that
\[
\begin{aligned}
\sum_{j=1}^2\|uh_j\|_2^2
&=\int_0^T\int_0^S u^2(s,t)h_1^2(s,t)dsdt 
+\int_0^T\int_0^S u^2(s,t)h_2^2(s,t)dsdt\\
&=\int_0^T\int_0^S u^2(s,t)\big(h_1^2(s,t)+  h_2^2(s,t)\big)dsdt\\
&=\int_0^T\int_0^S u^2(s,t)\mathbf{s}^2(h_1, h_2) (s,t) dsdt\\
&=\|u\mathbf{s}(h_1, h_2)\|_2^2.
\end{aligned}
\]
Using this and equation \eqref{eq:Feynman-int} with $h$ replaced with
$\mathbf{s}(h_1,h_2)$, the generalized analytic Yeh--Feynman integral 
$I_{\mathbf{s}(h_1,h_2)}^{\mathrm{an.}{yf}_{q}}[F]$ is given by the 
right-hand side of equation \eqref{eq:fubini-01}.
This completes the proof.
 \qed\end{proof}

\par
Using  an induction argument  we  obtain the following 
corollary.

\renewcommand{\thecorollary}{\thesection.4}

\begin{corollary}  \label{coro:ind01}
Let $\mathcal H=\{h_1,\ldots,h_n\}$ be  a set of    functions in 
$\mathrm{Supp}_{BV}(Q)$ and let $F\in \mathcal S(L_2(Q))$  be given by equation 
\eqref{eq:element}. Then,  for any nonzero real number  $q$, 
the iterated  analytic Yeh--Feynman integral in the following equation exist,
 and is given by the formula 
\[
\begin{aligned}
&
I_{h_n,x_n}^{\mathrm{an.}{yf}_{q}}\bigg[\cdots \bigg[I_{h_2,x_2}^{\mathrm{an.}{yf}_{q}}
\bigg[I_{h_1,x_1}^{\mathrm{an.}{yf}_{q}}\bigg[
F\bigg(\sum_{j=1}^n \mathcal Y_{h_j}(x_j;\cdot,\cdot)\bigg)\bigg]\bigg]\bigg]\cdots\bigg]\\
&
= \int_{L_2(Q)}\exp\bigg\{   -\frac{i}{2q}\sum_{j=1}^n\|uh_j\|_2^2\bigg\}df(u).
\end{aligned}
\]
Moreover it follows that
\begin{equation}\label{eq:applicable-thing}
\begin{aligned}
&
I_{h_n,x_n}^{\mathrm{an.}{yf}_{q}}\bigg[\cdots \bigg[I_{h_2,x_2}^{\mathrm{an.}{yf}_{q}}
\bigg[I_{h_1,x_1}^{\mathrm{an.}{yf}_{q}}\bigg[
F\bigg(\sum_{j=1}^n \mathcal Y_{h_j}(x_j;\cdot,\cdot)\bigg)\bigg]\bigg]\bigg]\cdots\bigg]\\
&
=I_{\mathbf{s}(\mathcal H),x}^{\mathrm{an.}{yf}_{q}}
\big[F\big( \mathcal Y_{\mathbf{s}(\mathcal H)}(x;\cdot,\cdot) \big)\big] 
\end{aligned}
\end{equation}
where $\mathbf{s}(\mathcal H)\equiv \mathbf{s}(h_1,\ldots,h_n)$  
is a function in $\mathrm{Supp}_{BV}(Q)$
satisfying  relation  \eqref{eq:fn-rot-ind} above.
\end{corollary}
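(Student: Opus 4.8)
The plan is to induct on $n$, using Theorem~\ref{thm:iter-gfft} as the base case together with the relation $\mathbf{s}(\mathbf{s}(h_1,\dots,h_{k-1}),h_k)=\mathbf{s}(h_1,\dots,h_k)$ established above. For $n=1$ the assertion is Theorem~\ref{prop:F-int} applied to $\mathbf{s}(h_1)$ (since $\mathbf{s}(h_1)^2=h_1^2$, one has $\|u\,\mathbf{s}(h_1)\|_2^2=\|uh_1\|_2^2$), and for $n=2$ it is exactly Theorem~\ref{thm:iter-gfft}. So I would assume $n\ge 3$ and that the corollary holds for every $F\in\mathcal S(L_2(Q))$ and every family of $n-1$ functions in $\mathrm{Supp}_{BV}(Q)$.

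The first step is to fix $x_n$ outside a scale-invariant null set, so that $\mathcal Y_{h_n}(x_n;\cdot,\cdot)\in C_0(Q)$ and, simultaneously, \eqref{eq:pwz-basic} holds with $\alpha=u$ for $|f|$-a.e.\ $u$; such a set exists by a Fubini/null-set interchange on $L_2(Q)\times C_0(Q)$ with respect to $|f|\times m_{\mathrm y}$, since for each fixed $u$ the exceptional $x$-set is scale-invariant null. For such $x_n$ the translate $F_{x_n}(w):=F\big(w+\mathcal Y_{h_n}(x_n;\cdot,\cdot)\big)$ lies again in $\mathcal S(L_2(Q))$: from \eqref{eq:element} and \eqref{eq:pwz-basic},
\[
F_{x_n}(w)=\int_{L_2(Q)}\exp\{i\langle{u,w}\rangle\}\exp\{i\langle{uh_n,x_n}\rangle\}\,df(u)=\int_{L_2(Q)}\exp\{i\langle{u,w}\rangle\}\,df_{x_n}(u),
\]
where $df_{x_n}(u)=\exp\{i\langle{uh_n,x_n}\rangle\}\,df(u)$ is a complex Borel measure with total variation $\|f_{x_n}\|\le\|f\|<+\infty$, because the phase factor has modulus one.

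Next I would apply the inductive hypothesis to $F_{x_n}$ and the functions $h_1,\dots,h_{n-1}$. Since $\sum_{j=1}^{n-1}\mathcal Y_{h_j}(x_j;\cdot,\cdot)+\mathcal Y_{h_n}(x_n;\cdot,\cdot)=\sum_{j=1}^{n}\mathcal Y_{h_j}(x_j;\cdot,\cdot)$, the inner $(n-1)$-fold iterated integral equals, for s-a.e.\ $x_n$,
\[
I_{\mathbf{s}_{n-1},y}^{\mathrm{an.}{yf}_{q}}\big[F\big(\mathcal Y_{\mathbf{s}_{n-1}}(y;\cdot,\cdot)+\mathcal Y_{h_n}(x_n;\cdot,\cdot)\big)\big],\qquad \mathbf{s}_{n-1}:=\mathbf{s}(h_1,\dots,h_{n-1})\in\mathrm{Supp}_{BV}(Q).
\]
Appending the outermost integral $I_{h_n,x_n}^{\mathrm{an.}{yf}_{q}}[\,\cdot\,]$ puts the whole expression into precisely the form of the left-hand side of Theorem~\ref{thm:iter-gfft}, with $(h_1,h_2)$ there replaced by $(\mathbf{s}_{n-1},h_n)$ (both in $\mathrm{Supp}_{BV}(Q)$) and the same $F$; hence the full iterated integral exists and equals $I_{\mathbf{s}(\mathbf{s}_{n-1},h_n),x}^{\mathrm{an.}{yf}_{q}}\big[F\big(\mathcal Y_{\mathbf{s}(\mathbf{s}_{n-1},h_n)}(x;\cdot,\cdot)\big)\big]$, which by the link relation equals $I_{\mathbf{s}(\mathcal H),x}^{\mathrm{an.}{yf}_{q}}\big[F\big(\mathcal Y_{\mathbf{s}(\mathcal H)}(x;\cdot,\cdot)\big)\big]$. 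This is \eqref{eq:applicable-thing}, and the closed-form evaluation then follows from \eqref{eq:Feynman-int} with $h$ replaced by $\mathbf{s}(\mathcal H)$, after integrating \eqref{eq:fn-rot-ind} against $u^2$ to get $\|u\,\mathbf{s}(\mathcal H)\|_2^2=\sum_{j=1}^n\|uh_j\|_2^2$.

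I expect the genuine obstacle to be the translation claim $F_{x_n}\in\mathcal S(L_2(Q))$: the total-variation bound is immediate, but one must justify (via the null-set interchange noted above) that \eqref{eq:pwz-basic} may be used for $|f|$-a.e.\ $u$ once $x_n$ is frozen, and that $u\mapsto\exp\{i\langle{uh_n,x_n}\rangle\}$ is $|f|$-measurable so that $f_{x_n}$ is genuinely a Borel measure; everything else is inherited from Theorems~\ref{prop:F-int} and~\ref{thm:iter-gfft}. If one prefers to avoid translations altogether, an alternative route is to first prove, by induction using only the ordinary Fubini theorem and the crude bound $|F(\,\cdot\,)|\le\|f\|$, the $n$-fold analogue of Lemma~\ref{lemma-funini-basic} for real scalars $\alpha_1,\dots,\alpha_n$, and then carry out the joint analytic continuation in $(\lambda_1,\dots,\lambda_n)\in\mathbb C_+^n$ and the nested Feynman limits exactly as in the proofs of Theorem~\ref{prop:F-int} and Theorem~\ref{thm:iter-gfft}.
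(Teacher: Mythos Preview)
Your proposal is correct and follows the same approach the paper indicates: the paper's entire proof is the single sentence ``Using mathematical induction we obtain the following corollary,'' and your argument is a fully worked-out version of that induction, reducing the $n$-fold iterated integral to the two-fold case of Theorem~\ref{thm:iter-gfft} via the translate $F_{x_n}\in\mathcal S(L_2(Q))$ and then invoking the link relation $\mathbf{s}(\mathbf{s}(h_1,\dots,h_{n-1}),h_n)=\mathbf{s}(\mathcal H)$. The care you take with the measurability and null-set issues for $f_{x_n}$, and the alternative route via an $n$-variable analogue of Lemma~\ref{lemma-funini-basic}, go well beyond what the paper spells out but are entirely in its spirit.
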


\renewcommand{\theexample}{\thesection.5}

\begin{example}
Let $\mathcal H_4=\{h_1,h_2,h_3,h_4\}$  be a set of  functions in $\mathrm{Supp}_{BV}(Q)$, where 
\[
\begin{cases}
h_1(s,t)=\sin^2 s \cos t,\\
h_2(s,t)=\sin  s \cos s \cos t,\\
h_3(s,t)=\sin  s \sin t \cos t,\\ 
h_4(s,t)=\sin  s  \cos^2 t\\
\end{cases} 
\]
for $(s,t)\in Q$. In this case, we can choose the function $\mathbf{s}(\mathcal H_4)\equiv \mathbf{s}(h_1,h_2,h_3,h_4)$ to be
\[
\mathbf{s}(\mathcal H_4)\equiv \mathbf{s}(h_1,h_2,h_3,h_4)=\sqrt{2}\sin s \cos t,
\]
since
\[
\sum_{j=1}^4h_j^2(s,t)= 2 \sin^2 s \cos^2 t.
\]
Thus, using \eqref{eq:applicable-thing}, it follows that
 \[
\begin{aligned}
&
I_{h_4,x_4}^{\mathrm{an.}{yf}_{q}}
\bigg[I_{h_3,x_3}^{\mathrm{an.}{yf}_{q}} \bigg[I_{h_2,x_2}^{\mathrm{an.}{yf}_{q}}
\bigg[I_{h_1,x_1}^{\mathrm{an.}{yf}_{q}}
\bigg[F\bigg(\sum_{j=1}^4 \mathcal Y_{h_j}(x_j;\cdot,\cdot)\bigg)\bigg]\bigg]\bigg]\bigg]\\
&
= I_{\mathbf{s}(\mathcal H_4),x}^{\mathrm{an.}{yf}_{q}}\big[F\big(\mathcal Y_{\mathbf{s}(\mathcal H_4)}(x;\cdot,\cdot)\big)\big].
\end{aligned}
\]
\end{example}

\section{Generalized Fourier--Yeh--Feynman transforms and generalized convolution product}

\par
The concept of an $L_1$  analytic   Fourier--Feynman transform  was
introduced by Brue in \cite{Brue}. In \cite{CS76}, Cameron and Storvick introduced an
$L_2$  analytic  Fourier--Feynman transform. In \cite{JS79-a}, Johnson and Skoug developed an $L_p$  
analytic  Fourier--Feynman transform for $1 \le p \le 2$ which extended the results 
in \cite{Brue,CS76}  and gave various
relationships between the $L_1$  and   $L_2$  theories.
The transforms studied in \cite{Brue,CS76,JS79-a} are defined on various classes of  functionals
$F$  on the classical Wiener space.

\par
In this section we apply the Fubini theorems obtained in the previous section
to study several relevant behaviors of the GFYFT 
of functionals  on  Yeh--Wiener space $C_0(Q)$.
In this paper, for simplicity, we restrict our discussion to the case $p =1$; 
however most of our results hold for all  $p \in[1,2]$.

\renewcommand{\thedefinition}{\thesection.1}
\begin{definition}
Let $\mathcal Y_h$ be the Gaussian process given by \eqref{eq:yprocess} with $h\in\mathrm{Supp}_{BV}(Q)$, 
and let $F$ be a scale-invariant measurable functional on $C_0(Q)$. 
For $\lambda\in \mathbb C_+$ and $y\in C_0(Q)$, let 
\[
T_{\lambda,h}(F)(y)=I_{h,x}^{\mathrm{an.}{yw}_{\lambda}}[F(y+\mathcal Y_h(x;\cdot,\cdot))].
\]
Then for $q\in \mathbb R\setminus\{0\}$,  the $L_1$ analytic $\mathcal Y_h$-GFYFT 
(namely, the GFYFT associated with the Gaussian  paths  $\mathcal Y_h(x; \cdot,\cdot)$), $T^{(1)}_{q,h}(F)$ of $F$, 
is defined by the formula  
\[
T^{(1)}_{q,h}(F)(y)= \lim_{\substack{ \lambda\to -iq \\  \lambda\in \mathbb C_+}}T_{\lambda,h}(F)(y)
\]
for s-a.e. $y\in C_{0}(Q)$ whenever this limit exists. That is to say, 
\begin{equation}\label{eq:Feynman-transform}
T^{(1)}_{q,h}(F)(y)=I_{h}^{\mathrm{an.}{yf}_{q}}[F(y+ \cdot )] \equiv I_{h,x}^{\mathrm{an.}{yf}_{q}}[F(y+\mathcal Y_h(x;\cdot,\cdot))] 
\end{equation}
for s-a.e. $y\in C_{0}(Q)$.
\end{definition}

We note that  $T_{q,h}^{(1)}(F)$ exists  and if $F\approx G$, 
then $T_{q,h}^{(1)}(G)$ exists  and  $T_{q,h}^{(1)}(G)\approx T_{q,h}^{(1)}(F)$.
 One can see that for each $h\in L_2(Q)$, $T_{q,h}^{(1)}(F)\approx T_{q,-h}^{(1)}(F)$
since
\begin{equation}\label{meanzero}
\int_{C_0(Q)}F(-x)dm_{\mathrm y}(x)
=\int_{C_0(Q)}F(x)dm_{\mathrm y}(x).
\end{equation}

\par
In view of \eqref{eq:Feynman-transform} and  \eqref{eq:Feynman-int} with  $F$ replaced with $F(y+\cdot)$,
we obtain the following existence theorem.

\renewcommand{\thetheorem}{\thesection.2}

\begin{theorem}\label{thm:gfft}
Let  $F\in\mathcal S(L_2(Q))$ be given by equation \eqref{eq:element}. Then,  
for all  $h\in \mathrm{Supp}_{BV}(Q)$, 
the $L_1$  analytic   $\mathcal Y_h$-GFYFT, $T_{q,h}^{(1)}(F)$ 
of $F$ exists for all nonzero  real  $q$, belongs to $\mathcal S(L_2(Q))$ 
and is given by the formula
\begin{equation}\label{eq:t1q-yw}
T_{q,h}^{(1)}(F)(y)
= \int_{L_2(Q)}\exp\{i\langle{u,y}\rangle\}df_t^h(u)
\end{equation}
for s-a.e. $y\in C_{0}(Q)$, where  $f_t^h$ is the complex measure  
in  $\mathcal M(L_2(Q))$ given by
\[
f_t^{h}(B)=\int_B \exp\bigg\{-\frac{i}{2q}\|uh\|_2^2\bigg\}df(u)
\]
for $B \in \mathcal B(L_2(Q))$.
\end{theorem}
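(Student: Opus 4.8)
The plan is to realize $T_{q,h}^{(1)}(F)(y)$, via its defining identity \eqref{eq:Feynman-transform}, as the generalized analytic Yeh--Feynman integral of the translated functional $F(y+\cdot\,)$, and then to invoke Theorem \ref{prop:F-int} with $F$ replaced by $F(y+\cdot\,)$, exactly as the observation preceding the theorem suggests. Concretely, I would fix $y\in C_0(Q)$ and expand \eqref{eq:element} to write
\[
F(y+w)=\int_{L_2(Q)}\exp\{i\langle u,y\rangle\}\exp\{i\langle u,w\rangle\}df(u)
\]
for s-a.e. $w\in C_0(Q)$, using additivity of the PWZ integral in its $C_0(Q)$-argument. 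Since $\langle u,y\rangle$ is real-valued wherever it is defined (being a limit of real Riemann--Stieltjes integrals), the factor $\exp\{i\langle u,y\rangle\}$ has modulus $1$, so
\[
f_y(B)=\int_B\exp\{i\langle u,y\rangle\}df(u),\qquad B\in\mathcal B(L_2(Q)),
\]
defines a complex Borel measure with $\|f_y\|\le\|f\|<+\infty$; hence $F(y+\cdot\,)\in\mathcal S(L_2(Q))$ with associated measure $f_y$, for s-a.e. $y$.

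By \eqref{eq:Feynman-transform} we have $T_{q,h}^{(1)}(F)(y)=I_{h,x}^{\mathrm{an.}{yf}_{q}}[\,(F(y+\cdot\,))(\mathcal Y_h(x;\cdot,\cdot))\,]$, so Theorem \ref{prop:F-int} applied to $F(y+\cdot\,)$ (with its measure $f_y$) shows that this generalized analytic Yeh--Feynman integral exists for every nonzero real $q$ and equals
\[
\int_{L_2(Q)}\exp\bigg\{-\frac{i}{2q}\|uh\|_2^2\bigg\}df_y(u)
=\int_{L_2(Q)}\exp\{i\langle u,y\rangle\}\exp\bigg\{-\frac{i}{2q}\|uh\|_2^2\bigg\}df(u),
\]
which is precisely \eqref{eq:t1q-yw} with $f_t^h$ as defined in the statement. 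It then remains to check that $f_t^h$ is a genuine element of $\mathcal M(L_2(Q))$, so that the right-hand side of \eqref{eq:t1q-yw} indeed represents an element of $\mathcal S(L_2(Q))$: since $h\in\mathrm{Supp}_{BV}(Q)$ is of bounded variation, hence bounded, pointwise multiplication $u\mapsto uh$ is a bounded linear operator on $L_2(Q)$, so $u\mapsto\|uh\|_2^2$ is continuous and in particular $\mathcal B(L_2(Q))$-measurable; combined with $|\exp\{-i\|uh\|_2^2/(2q)\}|=1$ this gives $\|f_t^h\|\le\|f\|<+\infty$. Hence $T_{q,h}^{(1)}(F)$ has the form \eqref{eq:element} with measure $f_t^h$, and therefore belongs to $\mathcal S(L_2(Q))$.

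The step I expect to be least routine is the assertion that $F(y+\cdot\,)\in\mathcal S(L_2(Q))$ for \emph{s-a.e.}\ $y$: this requires $u\mapsto\exp\{i\langle u,y\rangle\}$ to be $\mathcal B(L_2(Q))$-measurable for all but a scale-invariant null set of $y$, which one obtains from the joint measurability of $(u,x)\mapsto\langle u,x\rangle$ on $L_2(Q)\times C_0(Q)$ together with a Fubini argument, so that \eqref{eq:t1q-yw} becomes an equality of scale-invariant equivalence classes. Everything else — the analytic continuation of $T_{\lambda,h}(F)(y)$ into $\mathbb C_+$, the dominated-convergence passage to $\lambda=-iq$, and the Gaussian evaluation via \eqref{eq:pwz-basic} and \eqref{eq:w-int-f-complex} — is already packaged inside the proof of Theorem \ref{prop:F-int}.
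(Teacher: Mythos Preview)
Your proposal is correct and follows exactly the route the paper indicates: the paper's entire argument is the one-line remark preceding the theorem, namely that \eqref{eq:Feynman-transform} together with \eqref{eq:Feynman-int} applied to $F(y+\cdot\,)$ yields the result. You have simply spelled out the details the paper leaves implicit --- that $F(y+\cdot\,)$ lies in $\mathcal S(L_2(Q))$ with associated measure $f_y$, and that $f_t^h\in\mathcal M(L_2(Q))$ --- so there is no substantive difference in approach.
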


\par
The following corollary is a simple consequence of Theorem \ref {thm:gfft}.

\renewcommand{\thecorollary}{\thesection.3}

\begin{corollary}\label{thm:afft-inverse}
Let   $F$ be  as in Theorem \ref{thm:gfft}.
Then,  
for all  $h\in \mathrm{Supp}_{BV}(Q)$  and all nonzero real   $q$, 
\begin{equation}\label{eq:inverse}
T_{-q, h}^{(1)}\big(T_{q,h}^{(1)}(F)\big)\approx F.
\end{equation}
In other words, the $L_1$ $\mathcal Y_h$-GFYFT, $T_{q,h}^{(1)}$, has the 
inverse transform $\{T_{q,h}^{(1)}\}^{-1}=T_{-q,h}^{(1)}$.
\end{corollary}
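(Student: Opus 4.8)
The plan is to derive \eqref{eq:inverse} directly from the explicit formula \eqref{eq:t1q-yw} for the $L_1$ analytic $\mathcal Z_h$-GFYFT, applying it twice and tracking how the associated complex measures transform. First I would note that by Theorem \ref{thm:gfft}, $G \equiv T_{q,h}^{(1)}(F)$ exists, belongs to $\mathcal S(L_2(Q))$, and is given by \eqref{eq:t1q-yw} with associated measure $f_t^h \in \mathcal M(L_2(Q))$ defined by $df_t^h(u) = \exp\{-\tfrac{i}{2q}\|uh\|_2^2\}\,df(u)$. Since $G \in \mathcal S(L_2(Q))$, Theorem \ref{thm:gfft} applies again — now with parameter $-q$ in place of $q$ — so $T_{-q,h}^{(1)}(G)$ exists, belongs to $\mathcal S(L_2(Q))$, and is represented by the measure whose differential is $\exp\{-\tfrac{i}{2(-q)}\|uh\|_2^2\}\,df_t^h(u) = \exp\{\tfrac{i}{2q}\|uh\|_2^2\}\exp\{-\tfrac{i}{2q}\|uh\|_2^2\}\,df(u) = df(u)$.

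Thus the measure associated with $T_{-q,h}^{(1)}(T_{q,h}^{(1)}(F))$ is exactly $f$, the measure representing $F$ in \eqref{eq:element}. Two functionals in $\mathcal S(L_2(Q))$ arising from the same element of $\mathcal M(L_2(Q))$ agree for s-a.e.\ $x \in C_0(Q)$, i.e.\ they are equal $\approx$ (scale-invariant almost everywhere); hence $T_{-q,h}^{(1)}(T_{q,h}^{(1)}(F)) \approx F$, which is \eqref{eq:inverse}. The final assertion about the inverse transform is then an immediate restatement: on the class $\mathcal S(L_2(Q))$ the operator $T_{-q,h}^{(1)}$ is a left inverse of $T_{q,h}^{(1)}$, and by symmetry of the computation (exchanging the roles of $q$ and $-q$) it is also a right inverse, so $\{T_{q,h}^{(1)}\}^{-1} = T_{-q,h}^{(1)}$.

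There is essentially no serious obstacle here; the only point requiring a word of care is the bookkeeping of the exponential factor when the two transforms are composed — one must make sure the parameter substitution $q \mapsto -q$ is carried through the formula for $f_t^h$ correctly so that the two Gaussian factors cancel rather than reinforce. One should also explicitly invoke the fact (noted just before Theorem \ref{thm:gfft}) that $F \approx G$ implies $T_{q,h}^{(1)}(F) \approx T_{q,h}^{(1)}(G)$, so that the composition is well-defined on $\approx$-equivalence classes and the identity \eqref{eq:inverse} is meaningful. Beyond that, the argument is a two-line computation with the closed-form expression from Theorem \ref{thm:gfft}.
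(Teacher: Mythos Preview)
Your proposal is correct and matches the paper's approach exactly: the paper states the corollary as ``a simple consequence of Theorem~\ref{thm:gfft}'' with no written proof, and your two applications of \eqref{eq:t1q-yw} with the exponential factors cancelling is precisely that simple consequence.
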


\renewcommand{\theremark}{\thesection.4}

\begin{remark}
By Theorem \ref{thm:gfft} and an induction argument,
one can see that for any functional $F$ in $\mathcal S(L_2(Q))$,
 any nonzero real numbers  $q_1$, $q_2$, $\ldots$, $q_n$,
and any nonzero functions $h_1, \ldots, h_n$ in $ \mathrm{Supp}_{BV}(Q)$,
the iterated GFYFT  
\[
T_{q_n,h_n}^{(1)}(T_{q_{n-1},h_{n-1}}^{(1)}(
   \cdots(T_{q_2,h_2}^{(1)}(T_{q_1,h_1}^{(1)}(F)))\cdots))
\]
of $F$ exists and  belongs to $\mathcal S(L_2(Q))$.  
\end{remark}

\par
Next, in \cite{HSS01-I}, Huffman, Skoug and Storvick studied a Fubini 
theorem involving ordinary Fourier--Feynman transform  for  functionals on 
classical Wiener space $C_0[0,T]$.  
Using \eqref{eq:Feynman-transform} and \eqref{eq:Q-iter} with  $F$ replaced with $F(y+\cdot)$,
we also  obtain the following   Fubini theorem involving the $L_1$ analytic GFYFTs of functionals in 
the Banach algebra $\mathcal S(L_2(Q))$.

\renewcommand{\thetheorem}{\thesection.5}

\begin{theorem}\label{multi-single-q-T}
Let $F$ and $\{q_1,q_2, \ldots,q_n\}$ be 
as in Theorem \ref{multi-single-q}. Then it follows that for each  
function $h \in  \mathrm{Supp}_{BV}(Q)$, 
\begin{equation}\label{eq:Q-iter+}
T_{q_n,h}^{(1)}(T_{q_{n-1},h}^{(1)}(
   \cdots(T_{q_2,h}^{(1)}(T_{q_1,h}^{(1)}(F)))\cdots)) (y)
=T_{\alpha_n,h}^{(1)}(F)(y) 
\end{equation}
for s-a.e. $y\in C_0(Q)$,  where $\alpha_n$  is a nonzero real number given by 
\eqref{eq:Q-iter-parameter}.
\end{theorem}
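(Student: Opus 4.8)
The plan is to reduce Theorem \ref{multi-single-q-T} to the already-established Theorem \ref{multi-single-q} by translating each analytic $\mathcal Y_h$-GFYFT into an iterated analytic $\mathcal Y_h$-Yeh--Feynman integral via the defining formula \eqref{eq:Feynman-transform}. First I would fix a functional $F\in\mathcal S(L_2(Q))$ given by \eqref{eq:element}, fix $h\in\mathrm{Supp}_{BV}(Q)$ and the real numbers $q_1,\dots,q_n$ satisfying the nonvanishing condition on the partial harmonic sums, and fix a point $y\in C_0(Q)$ outside the relevant scale-invariant null set. Since Theorem \ref{thm:gfft} guarantees that $T^{(1)}_{q_1,h}(F)\in\mathcal S(L_2(Q))$, and more generally the iterated transform $T^{(1)}_{q_k,h}(\cdots(T^{(1)}_{q_1,h}(F))\cdots)$ stays in $\mathcal S(L_2(Q))$ (the Remark following Corollary \ref{thm:afft-inverse}), every transform appearing in \eqref{eq:Q-iter+} is well defined and each application of the next transform is legitimate.

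The key computation is to write out the left-hand side of \eqref{eq:Q-iter+} by peeling off the transforms one at a time from the inside. Using \eqref{eq:Feynman-transform}, the innermost transform is $T^{(1)}_{q_1,h}(F)(y_1)=I^{\mathrm{an.}yf_{q_1}}_{h,x_1}[F(y_1+\mathcal Y_h(x_1;\cdot,\cdot))]$; applying the next transform replaces $y_1$ by $y_2+\mathcal Y_h(x_2;\cdot,\cdot)$ and takes the analytic $\mathcal Y_h$-Yeh--Feynman integral in $x_2$; iterating, the full left-hand side of \eqref{eq:Q-iter+} equals
\[
I^{\mathrm{an.}yf_{q_n}}_{h,x_n}\Bigl[\cdots\Bigl[I^{\mathrm{an.}yf_{q_1}}_{h,x_1}\Bigl[F\Bigl(y+\sum_{j=1}^n\mathcal Y_h(x_j;\cdot,\cdot)\Bigr)\Bigr]\Bigr]\cdots\Bigr].
\]
Now I would apply Theorem \ref{multi-single-q} with $F$ replaced by $F(y+\cdot)$ — which is again in $\mathcal S(L_2(Q))$ since translation by a fixed path corresponds to multiplying the measure $f$ by the unimodular factor $\exp\{i\langle u,y\rangle\}$ — to collapse this iterated integral to the single integral $I^{\mathrm{an.}yf_{\alpha_n}}_{h,x}[F(y+\mathcal Y_h(x;\cdot,\cdot))]$, which by \eqref{eq:Feynman-transform} is exactly $T^{(1)}_{\alpha_n,h}(F)(y)$. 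This gives \eqref{eq:Q-iter+}.

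The main obstacle — and the point where care is genuinely needed — is the handoff between iterated limits and the existence of the intermediate transforms as honest $\mathcal S(L_2(Q))$-functionals, so that the substitution $y_k\mapsto y_{k+1}+\mathcal Y_h(x_{k+1};\cdot,\cdot)$ inside the next analytic continuation is valid for s-a.e. $y_{k+1}$. This is handled by invoking Theorem \ref{thm:gfft} at each stage to get an explicit element of $\mathcal S(L_2(Q))$ (with measure $f^h_t$ built from a bounded unimodular-in-modulus multiplier), together with the fact, recorded in the excerpt, that $T^{(1)}_{q,h}$ respects the equivalence relation $\approx$. One must also track that the scale-invariant null sets accumulated over the $n$ steps have scale-invariant-null union, which is immediate. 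Apart from this bookkeeping, the argument is a direct reduction and requires no new estimates beyond those already in the proofs of Theorems \ref{prop:F-int}, \ref{multi-single-q}, and \ref{thm:gfft}.
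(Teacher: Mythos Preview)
Your proposal is correct and follows exactly the route the paper indicates: the paper's entire justification is the sentence ``Using \eqref{eq:Feynman-transform} and \eqref{eq:Q-iter} with $F$ replaced with $F(y+\cdot)$,'' which is precisely your reduction of the iterated GFYFT to the iterated analytic $\mathcal Y_h$-Yeh--Feynman integral of the translate $F(y+\cdot)$ and then an appeal to Theorem~\ref{multi-single-q}. Your additional bookkeeping (closure of $\mathcal S(L_2(Q))$ under each $T^{(1)}_{q_k,h}$, handling of scale-invariant null sets) only makes the paper's sketch more explicit.
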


\par
Equation   \eqref{eq:Q-iter+}  above
tells us that  iterated  GFYFT  with different variance parameters $q_1,\ldots,q_n$
 can be reduced to a single  GFYFT.
We will assert    that the composition of   GFYFTs associated 
with different Gaussian processes also can be reduced to a single GFYFT.
In view of \eqref{eq:Feynman-transform} and  \eqref{eq:applicable-thing}  
with  $F$ replaced with $F(y+\cdot)$,
we obtain the following theorem, which  will be very useful to prove our main theorems
in next sections.

\renewcommand{\thetheorem}{\thesection.6}

\begin{theorem}  \label{thm:ind01-2019}
Let $\mathcal H=\{h_1,\ldots,h_n\}$  and let $F\in \mathcal S(L_2(Q))$  
be as in Corollary \ref{coro:ind01}. 
Then it follows that   for all  nonzero real  $q$,     
\begin{equation}\label{eq:gfft-n-fubini}
T_{q,h_n}^{(1)}(T_{q,h_{n-1}}^{(1)}(
   \cdots(T_{q,h_2}^{(1)}(T_{q,h_1}^{(1)}(F)))\cdots))(y)  
=T_{q, \mathbf{s}(\mathcal H)}^{(1)}(F)(y)
\end{equation}
for s-a.e. $y\in C_{0}(Q)$, where 
$\mathbf{s}(\mathcal H)\equiv \mathbf{s}(h_1,\ldots,h_n)$  
is a function in $\mathrm{Supp}_{BV}(Q)$
satisfying  relation  \eqref{eq:fn-rot-ind} above.
\end{theorem}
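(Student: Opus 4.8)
The plan is to iterate the explicit representation furnished by Theorem~\ref{thm:gfft} and then collapse the accumulated kernel using the pointwise identity already established inside the proof of Corollary~\ref{coro:ind01}. As a base case, since $F$ is given by \eqref{eq:element} with associated measure $f\in\mathcal M(L_2(Q))$, Theorem~\ref{thm:gfft} guarantees that $T_{q,h_1}^{(1)}(F)$ exists, belongs to $\mathcal S(L_2(Q))$, and is represented by the measure $u\mapsto\exp\{-\tfrac{i}{2q}\|uh_1\|_2^2\}\,df(u)$.

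Next I would run an induction on $k\in\{1,\dots,n\}$. Suppose the composite $G_k:=T_{q,h_k}^{(1)}(\cdots(T_{q,h_1}^{(1)}(F))\cdots)$ exists, lies in $\mathcal S(L_2(Q))$, and is represented by $u\mapsto\exp\{-\tfrac{i}{2q}\sum_{j=1}^{k}\|uh_j\|_2^2\}\,df(u)$. Because $G_k\in\mathcal S(L_2(Q))$ and $h_{k+1}\in\mathrm{Supp}_{BV}(Q)$, Theorem~\ref{thm:gfft} applies once more, with $F$ replaced by $G_k$ and $h$ replaced by $h_{k+1}$, and yields that $G_{k+1}=T_{q,h_{k+1}}^{(1)}(G_k)$ exists, lies in $\mathcal S(L_2(Q))$, and is represented by $u\mapsto\exp\{-\tfrac{i}{2q}\|uh_{k+1}\|_2^2\}\exp\{-\tfrac{i}{2q}\sum_{j=1}^{k}\|uh_j\|_2^2\}\,df(u)=\exp\{-\tfrac{i}{2q}\sum_{j=1}^{k+1}\|uh_j\|_2^2\}\,df(u)$. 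This closes the induction, so the left side of \eqref{eq:gfft-n-fubini} is represented by $u\mapsto\exp\{-\tfrac{i}{2q}\sum_{j=1}^{n}\|uh_j\|_2^2\}\,df(u)$.

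To finish, I would invoke the pointwise relation $\sum_{j=1}^{n}\|uh_j\|_2^2=\|u\,\mathbf{s}(\mathcal H)\|_2^2$, valid for every $u\in L_2(Q)$; this is exactly the computation carried out in the proof of Corollary~\ref{coro:ind01} from \eqref{eq:fn-rot-ind}, and $\mathbf{s}(\mathcal H)$ may be taken in $\mathrm{Supp}_{BV}(Q)$ so that Theorem~\ref{thm:gfft} is applicable to it. Hence the representing measure of the composite equals $u\mapsto\exp\{-\tfrac{i}{2q}\|u\,\mathbf{s}(\mathcal H)\|_2^2\}\,df(u)$, which by Theorem~\ref{thm:gfft} (with $h$ replaced by $\mathbf{s}(\mathcal H)$) is precisely the representing measure of $T_{q,\mathbf{s}(\mathcal H)}^{(1)}(F)$. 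Two functionals in $\mathcal S(L_2(Q))$ arising from the same element of $\mathcal M(L_2(Q))$ agree for s-a.e.\ $y\in C_0(Q)$, which is \eqref{eq:gfft-n-fubini}.

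Alternatively, following the hint, one unfolds \eqref{eq:Feynman-transform} repeatedly to rewrite the left side of \eqref{eq:gfft-n-fubini} as the iterated Yeh--Feynman integral $I_{h_n,x_n}^{\mathrm{an.}{yf}_q}[\cdots[I_{h_1,x_1}^{\mathrm{an.}{yf}_q}[F(y+\sum_{j=1}^{n}\mathcal Y_{h_j}(x_j;\cdot,\cdot))]]\cdots]$ and then applies \eqref{eq:applicable-thing} with $F$ replaced by $F(y+\cdot)$, which lies in $\mathcal S(L_2(Q))$ for s-a.e.\ $y$. I expect the only genuinely delicate point, in either route, to be the bookkeeping of the scale-invariant-null exceptional sets in $y$ accumulating over the $n$ stages and the repeated verification that membership in $\mathcal S(L_2(Q))$ propagates; both are supplied by Theorem~\ref{thm:gfft}, so that the algebraic heart of the statement is entirely contained in Corollary~\ref{coro:ind01}.
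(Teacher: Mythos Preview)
Your proposal is correct. In fact your ``alternative'' route---unfolding \eqref{eq:Feynman-transform} and applying \eqref{eq:applicable-thing} with $F$ replaced by $F(y+\cdot)$---is precisely the paper's argument, which is stated in a single sentence just before the theorem and not written out as a formal proof.

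Your primary route, iterating Theorem~\ref{thm:gfft} on the representing measures and then collapsing $\sum_{j=1}^n\|uh_j\|_2^2$ to $\|u\,\mathbf{s}(\mathcal H)\|_2^2$, is a genuinely different (though closely related) path: it bypasses Corollary~\ref{coro:ind01} and the analytic-continuation machinery of Theorem~\ref{thm:iter-gfft} entirely, working instead at the level of the Banach algebra $\mathcal S(L_2(Q))$ and its explicit transform formula. This buys you a self-contained argument that never needs to revisit the iterated analytic Yeh--Wiener integrals, at the cost of re-deriving inside the induction the same exponential-kernel identity that Corollary~\ref{coro:ind01} already packages. The paper's approach is shorter because it leverages the Fubini theorem already proved; yours is more transparent about why the result holds.
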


\renewcommand{\theexample}{\thesection.7}

\begin{example}
 Let $h_1$ and $h_2$ be given by  
\[
h_1(s,t)=\sin \bigg(\frac{2\pi s}{T}\bigg)\sin \bigg(\frac{2\pi t}{T}\bigg)-\cos \bigg(\frac{2\pi s}{T}\bigg)\cos \bigg(\frac{2\pi t}{T}\bigg)
\]
and
\[  
h_2(s, t)=\sin \bigg(\frac{2\pi s}{T}\bigg)\cos  \bigg(\frac{2\pi t}{T}\bigg)+\cos \bigg(\frac{2\pi s}{T}\bigg)\sin \bigg(\frac{2\pi t}{T}\bigg)
\]
on $Q\equiv [0,S]\times[0,T]$, 
respectively. 
Then $h_1$ and $h_2$ are in $\mathrm{Supp}_{BV}(Q)$ and $\mathbf{s}(h_1,h_2)\equiv \pm 1$. Thus, by equation \eqref{eq:gfft-n-fubini}
with $n=2$ and \eqref{meanzero}, we have
\begin{equation}\label{simple-example}
T_{q,h_2}^{(1)}(T_{q,h_1}^{(1)}(F))(y) 
=T_{q,\mathbf{s}(h_1,h_2)}^{(1)}(F)(y) 
\end{equation}
for every $F\in \mathcal S(L_2(Q))$ and s-a.e. $y\in C_{0}(Q)$.  
 \end{example}

\par
We finish this section 
with  a more general assertion for iterated GFFT.
The following corollary follows immediately form  
equations \eqref{eq:gfft-n-fubini} and \eqref{eq:Q-iter+}.

\renewcommand{\thecorollary}{\thesection.8}

\begin{corollary}
Let $\mathcal H_1=\{h_{11},\ldots,h_{1n_1}\}$ and 
$\mathcal H_2=\{h_{21},\ldots,h_{2n_2}\}$ 
be sets of  functions in  $\mathrm{Supp}_{BV}(Q)$ which satisfy the relation
\[
\mathbf{s}(\mathcal H_1)
=\mathbf{s}(\mathcal H_2)
\quad{i.e.,}\quad
\mathbf{s}(h_{11},\ldots,h_{1n_1})
=\mathbf{s}(h_{21},\ldots,h_{2n_2})
\]
for $m_L^2$-a.e. on $Q$. Then  it follows that  for any nonzero real numbers $q_1$ 
and $q_2$ with $q_1+q_2\ne0$,
\[
\begin{aligned}
&T_{q_2,h_{2n_2}}^{(1)}\Big(\cdots\Big(T_{q_2,h_{21}}^{(1)}
\Big(T_{q_1,h_{1n_1}}^{(1)}(\cdots(T_{q_1,h_{11}}^{(1)}(F))\cdots)\Big)\Big)\cdots\Big)(y) \\
&=T_{q_2,h_{2n_2}}^{(1)}\Big(
   \cdots\Big(T_{q_2,h_{21}}^{(1)}\Big(T_{q_1,\mathbf{s}(\mathcal H_1)}^{(1)}(F)\Big)\Big)\cdots\Big)(y) \\
&=T_{q_2,\mathbf{s}(\mathcal H_2)}^{(1)}(T_{q_1,\mathbf{s}(\mathcal H_1)}^{(1)}(F))(y) \\
&=T_{\frac{q_1q_2}{q_1+q_2}, \mathbf{s}(\mathcal H)}^{(1)}(F)(y)
\end{aligned}
\]
for s-a.e. $y\in C_{0}(Q)$, where $\mathcal H$ is a finite  set of  functions 
in $\mathrm{Supp}_{BV}(Q)$ with 
$\mathbf{s}(\mathcal H)=\mathbf{s}(\mathcal H_1)=\mathbf{s}(\mathcal H_2)$.
\end{corollary}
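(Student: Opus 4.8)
The plan is to chain together the two Fubini-type results already in hand: Theorem~\ref{multi-single-q-T} (iterated GFYFTs with the same kernel but different variance parameters collapse to a single GFYFT with parameter $\alpha_n$) and Theorem~\ref{thm:ind01-2019} (iterated GFYFTs with the same variance parameter but different kernels $h_1,\ldots,h_n$ collapse to a single GFYFT with kernel $\mathbf{s}(\mathcal H)$). The first equality in the display is just Theorem~\ref{thm:ind01-2019} applied to the inner block of $n_1$ transforms, all with variance parameter $q_1$ and kernels $h_{11},\ldots,h_{1n_1}$: that block reduces to $T_{q_1,\mathbf{s}(\mathcal H_1)}^{(1)}(F)$, and this is legitimate because every functional produced along the way stays in $\mathcal S(L_2(Q))$ by Theorem~\ref{thm:gfft}, so the hypotheses of Theorem~\ref{thm:ind01-2019} are met at each stage.

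For the second equality I would apply Theorem~\ref{thm:ind01-2019} again, this time to the outer block of $n_2$ transforms, all with parameter $q_2$ and kernels $h_{21},\ldots,h_{2n_2}$, acting on the functional $G := T_{q_1,\mathbf{s}(\mathcal H_1)}^{(1)}(F) \in \mathcal S(L_2(Q))$; that block collapses to $T_{q_2,\mathbf{s}(\mathcal H_2)}^{(1)}(G)$, giving $T_{q_2,\mathbf{s}(\mathcal H_2)}^{(1)}(T_{q_1,\mathbf{s}(\mathcal H_1)}^{(1)}(F))$. For the third equality I invoke the hypothesis $\mathbf{s}(\mathcal H_1)=\mathbf{s}(\mathcal H_2)$ $m_L^2$-a.e.\ on $Q$; call this common function $\mathbf{s}(\mathcal H)$ (a function in $\mathrm{Supp}_{BV}(Q)$). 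Then the two remaining transforms share the \emph{same} kernel $\mathbf{s}(\mathcal H)$ but have different parameters $q_1$ and $q_2$, so Theorem~\ref{multi-single-q-T} with $n=2$ applies (its hypothesis $1/q_1+1/q_2\ne 0$ is exactly $q_1+q_2\ne 0$), collapsing the pair to the single transform $T_{\alpha_2,\mathbf{s}(\mathcal H)}^{(1)}(F)$ with $\alpha_2=(1/q_1+1/q_2)^{-1}=\frac{q_1q_2}{q_1+q_2}$. This is the fourth equality and finishes the chain.

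The only genuine subtlety is bookkeeping of the scale-invariant-a.e.\ exceptional sets: each application of a GFYFT identity holds only for s-a.e.\ $y\in C_0(Q)$, and $n_1+n_2$ such identities are being composed. Since a finite (indeed countable) union of scale-invariant null sets is scale-invariant null, and since $F\approx G$ implies $T_{q,h}^{(1)}(F)\approx T_{q,h}^{(1)}(G)$ (noted right after the definition of the GFYFT), the composition of finitely many a.e.-valid identities remains valid s-a.e.; I would simply remark this rather than track the exceptional sets explicitly. A second minor point: one must check that $\mathbf{s}(\mathcal H_1)=\mathbf{s}(\mathcal H_2)$ as $L_2(Q)$-elements is enough to conclude $T_{q_2,\mathbf{s}(\mathcal H_1)}^{(1)}=T_{q_2,\mathbf{s}(\mathcal H_2)}^{(1)}$, but this is immediate from the explicit formula \eqref{eq:t1q-yw}, since the kernel enters only through $\|uh\|_2^2$, which depends on $h$ only through its $m_L^2$-a.e.\ equivalence class. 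I expect no real obstacle here; the corollary is a formal consequence of the two theorems, and the write-up is essentially the four-line display with a sentence citing the relevant result at each step.
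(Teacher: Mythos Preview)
Your proposal is correct and matches the paper's approach exactly: the paper states this corollary as ``a very brief development result combining equations \eqref{eq:gfft-n-fubini} and \eqref{eq:Q-iter+}'' with no further proof, and your argument is precisely that combination---Theorem~\ref{thm:ind01-2019} twice to collapse each block of same-parameter transforms, then Theorem~\ref{multi-single-q-T} with $n=2$ on the common kernel $\mathbf{s}(\mathcal H)$. Your attention to the s-a.e.\ bookkeeping and to the fact that the kernel enters only through $\|uh\|_2^2$ is more careful than the paper itself.
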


%

\section{Generalized  Fourier--Yeh--Feynman transform   and
generalized convolution product on $C_0(Q)$}\label{sec:aesthetic}

\par 
In this section, as  applications of the results in the previous sections,
we  establish  more general  relationships, such as  \eqref{eq:offt-ocp}  and \eqref{eq:ocp-offt},
between the GFYFT and the GCP on the Yeh--Wiener space $C_0(Q)$. 
 
\par
 The following definition of the GCP on $C_{0}(Q)$
is due to Chang and  Choi  \cite{CC17}.

\renewcommand{\thedefinition}{\thesection.1}

\begin{definition}\label{def:cp}
Let $F$ and $G$ be  scale-invariant measurable functionals on $C_{0}(Q)$.
For $\lambda \in \widetilde{\mathbb C}_+$ and $k_1,k_2\in  \mathrm{Supp}_{BV}(Q)$, 
we define their GCP with respect to $\{\mathcal{Y}_{k_1},\mathcal{Y}_{k_2}\}$ 
(if it exists) by
\[
\begin{aligned}
&
(F*G)_{\lambda}^{(k_1,k_2)}(y)\\
&
=
\begin{cases}
E_{\vec k,x}^{\mathrm{an}.yw_{\lambda}} \big[
F\big(\frac{y+{\mathcal Y}_{k_1} (x,\cdot)}{\sqrt2}\big)
G\big(\frac{y-{\mathcal Y}_{k_2} (x,\cdot)}{\sqrt2}\big)\big],  
                     \,\, &    \lambda \in \mathbb C_+ \\
E_{\vec k,x}^{\mathrm{an}.yf_{q}} \big[ 
F\big(\frac{y+{\mathcal Y}_{k_1} (x,\cdot)}{\sqrt2}\big)
G\big(\frac{y-{\mathcal Y}_{k_2} (x,\cdot)}{\sqrt2}\big)\big],
&\lambda=-iq,\,\, q\in \mathbb R, \,\,q\ne 0.
\end{cases} 
\end{aligned}
\]
When $\lambda =-iq$,  we denote $(F*G)_{\lambda}^{(k_1,k_2)}$ 
by $(F*G)_{q}^{(k_1,k_2)}$.
\end{definition}

 Proceeding as in the proof of \cite[Theorem 3.2]{HPS97-2}, 
we can obtain the  existence of the GCP
of functionals in $\mathcal S(L_2(Q))$.

\renewcommand{\thetheorem}{\thesection.2}

\begin{theorem}  \label{thm:gcp}
Let $k_1$ and $k_2$ be   functions in $\mathrm{Supp}_{BV}(Q)$ and let $F$ 
and $G$ be elements of $\mathcal S(L_2(Q))$ with corresponding finite 
Borel measures $f$ and $g$ in $\mathcal M(L_2(Q))$. Then, the GCP 
$(F*G)_q^{(k_1,k_2)}$ of $F$ and $G$  exists for all  nonzero real  $q$, belongs to 
$\mathcal S(L_2(Q))$, and is given by the formula
\begin{equation}\label{eq:GCP-yw}
(F*G)_q^{(k_1,k_2)}(y)
= \int_{L_2(Q)}\exp\{i\langle{w,y}\rangle\}d\varphi^{k_1,k_2}_c(w)
\end{equation}
for s-a.e. $y\in C_{0}(Q)$, where  
\[
\varphi^{k_1,k_2}_c
=\varphi^{k_1,k_2}\circ\phi^{-1},
\]
$\varphi^{k_1,k_2}$ is the complex measure in $\mathcal M(L_2(Q))$ given 
by
\[
\varphi_{k_1,k_2}(B)
=\int_B \exp\bigg\{-\frac{i}{4q}\|uk_1-vk_2\|_2^2\bigg\}df(u)dg(v)
\]
for $B \in \mathcal B(L_2^2(Q))$, and $\phi: L_2(Q)\times L_2(Q) \to L_2(Q)$ is the 
continuous function  given by $\phi(u,v)=(u+v)/\sqrt2$.
\end{theorem}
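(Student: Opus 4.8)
The plan is to mimic the proof of Theorem~\ref{prop:F-int}, now carrying an extra parameter coming from the second functional. First I would write $F$ and $G$ via their representing measures, $F(x)=\int_{L_2(Q)}\exp\{i\langle u,x\rangle\}df(u)$ and $G(x)=\int_{L_2(Q)}\exp\{i\langle v,x\rangle\}dg(v)$, and substitute into the definition \eqref{eq:gcp-Z}. For real $\lambda>0$ the convolution involves the honest Yeh--Wiener integral of
\[
\exp\!\Big\{\tfrac{i\lambda^{-1/2}}{\sqrt2}\big\langle uk_1-vk_2,\,x\big\rangle\Big\}\exp\!\Big\{\tfrac{i}{\sqrt2}\langle u+v,\,y\rangle\Big\},
\]
where I have used Lemma (equation \eqref{eq:pwz-basic}) to replace $\langle u,\mathcal Z_{k_1}(x,\cdot)\rangle$ by $\langle uk_1,x\rangle$ and similarly for $k_2$, and linearity (fact (iii)) to combine the two into $\langle uk_1-vk_2,x\rangle$. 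The integrability bound $\int_{C_0(Q)}|F|\,|G|\,dm_{\mathrm y}\le\|f\|\,\|g\|<\infty$ justifies the usual Fubini theorem, and \eqref{eq:w-int-f-complex} with $\alpha=\lambda^{-1/2}/\sqrt2$ evaluates the $x$-integral, producing $\exp\{-\tfrac{1}{4\lambda}\|uk_1-vk_2\|_2^2\}$. This gives, for $\lambda>0$,
\[
(F*G)_\lambda^{(k_1,k_2)}(y)=\int_{L_2^2(Q)}\exp\Big\{-\tfrac{1}{4\lambda}\|uk_1-vk_2\|_2^2\Big\}\exp\Big\{\tfrac{i}{\sqrt2}\langle u+v,y\rangle\Big\}df(u)\,dg(v).
\]

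Next I would argue analytic continuation in $\lambda$ exactly as in Theorem~\ref{prop:F-int}: for $\lambda\in\mathbb C_+$ the integrand is dominated by $1$ in modulus since $\mathrm{Re}(1/\lambda)>0$, so the right-hand side is well defined and bounded by $\|f\|\,\|g\|$; dominated convergence gives continuity on $\widetilde{\mathbb C}_+$; and Morera's theorem (via the usual Fubini theorem plus Cauchy's theorem applied to the entire-on-$\mathbb C_+$ factor $\lambda\mapsto\exp\{-\|uk_1-vk_2\|_2^2/(4\lambda)\}$) gives analyticity on $\mathbb C_+$. Hence $I_{\vec k}^{\mathrm{an}.yw_\lambda}$ of the product functional exists, and a further dominated-convergence passage $\lambda\to-iq$ through $\mathbb C_+$ yields existence of $(F*G)_q^{(k_1,k_2)}$ with the kernel $\exp\{-\tfrac{i}{4q}\|uk_1-vk_2\|_2^2\}$.

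Finally I would repackage the double integral as an element of $\mathcal S(L_2(Q))$. Define $\varphi^{k_1,k_2}$ on $\mathcal B(L_2^2(Q))$ by $d\varphi^{k_1,k_2}(u,v)=\exp\{-\tfrac{i}{4q}\|uk_1-vk_2\|_2^2\}\,df(u)\,dg(v)$; this is a finite complex Borel measure with $\|\varphi^{k_1,k_2}\|\le\|f\|\,\|g\|$. With $\phi(u,v)=(u+v)/\sqrt2$ continuous hence Borel, the pushforward $\varphi_c^{k_1,k_2}=\varphi^{k_1,k_2}\circ\phi^{-1}$ lies in $\mathcal M(L_2(Q))$, and the change-of-variables theorem (as used in the proof of Lemma on $G_h$) rewrites $\int\exp\{\tfrac{i}{\sqrt2}\langle u+v,y\rangle\}d\varphi^{k_1,k_2}=\int\exp\{i\langle w,y\rangle\}d\varphi_c^{k_1,k_2}(w)$, giving formula \eqref{eq:GCP-yw} and membership in $\mathcal S(L_2(Q))$. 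I expect the only genuine subtlety to be the bookkeeping that turns $\langle uk_1,x\rangle-\langle vk_2,x\rangle$ into a single PWZ integral $\langle uk_1-vk_2,x\rangle$ holding s-a.e.\ simultaneously in the $x$-variable (so that fact (iv) applies with a single Gaussian variable of variance $\|uk_1-vk_2\|_2^2$); this is handled by the linearity property (iii) together with the s-a.e.\ statements in (i)--(ii), and everything else is a routine transcription of the one-functional argument.
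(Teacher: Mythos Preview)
Your proposal is correct and is precisely the argument the paper has in mind: the paper does not spell out a proof but simply says ``Proceeding as in the proof of \cite[Theorem 3.2]{HPS97-2},'' and that referenced argument is exactly the routine you describe---substitute the measure representations of $F$ and $G$, use \eqref{eq:pwz-basic} and linearity to collapse the PWZ integrals, evaluate the $x$-integral by \eqref{eq:w-int-f-complex}, analytically continue via Morera plus dominated convergence as in Theorem~\ref{prop:F-int}, and finally push forward by $\phi(u,v)=(u+v)/\sqrt2$ to land in $\mathcal S(L_2(Q))$. Your identification of the only nonroutine point (the s-a.e.\ linearity $\langle uk_1,x\rangle-\langle vk_2,x\rangle=\langle uk_1-vk_2,x\rangle$, handled per $(u,v)$ and then integrated against $|f|\times|g|$) is also on the mark.
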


 Now we are ready to establish relationships between the GFYFT and the GCP
on $C_0(Q)$.  

\subsection{Relationship I: GFYFT of the  GCP}
 
Our first relationship between  the GFYFT and the GCP shows that the GFYFT of the GCP is a product of GFYFTs.
 To establish the first relationship we need the following lemmas.

\renewcommand{\thelemma}{\thesection.3}

\begin{lemma}\label{lem:process-lemma1}
Given four functions $h_1$, $h_2$, $k_1$, and  $k_2$ in $\mathrm{Supp}_{BV}(Q)$,
let stochastic processes
\[
\mathcal G_{h_1,k_1}, \,\, \mathcal G_{h_2,k_2}: C_0(Q)\times C_0(Q)\times Q \to\mathbb R
\] 
be given by
\[
\mathcal G_{h_1,k_1}(x_1,x_2;s,t)=\mathcal Y_{h_1} (x_1;s,t)+\mathcal Y_{k_1}(x_2;s,t)
\]
and
\[
\mathcal G_{h_2,k_2}(x_1,x_2;s,t)=\mathcal Y_{h_2} (x_1;s,t)-\mathcal Y_{k_2}(x_2;s,t),
\]
respectively.  Then the following assertions are equivalent.
\begin{itemize}
\item[(i)] $\mathcal G_{h_1,k_1}$ and $\mathcal G_{h_2,k_2}$ are independent processes,
\item[(ii)] $h_1h_2=k_1k_2$  in $L_2(Q)$. 
\end{itemize}
\end{lemma}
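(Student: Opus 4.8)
The plan is to reduce the statement to a computation of the covariance between the two Gaussian processes. First I would observe that $\mathcal G_{h_1,k_1}$ and $\mathcal G_{h_2,k_2}$ are jointly Gaussian processes on $C_0(Q)\times C_0(Q)$ with mean zero, since each is a linear combination of the mean-zero Gaussian processes $\mathcal Z_{h_j}$ and $\mathcal Z_{k_j}$. For jointly Gaussian families, independence of the two processes is equivalent to the vanishing of the cross-covariance function
\[
\Lambda\big((s,t),(s',t')\big)
=\int_{C_0(Q)\times C_0(Q)}\mathcal G_{h_1,k_1}(x_1,x_2;s,t)\,\mathcal G_{h_2,k_2}(x_1,x_2;s',t')\,dm_{\mathrm y}(x_1)dm_{\mathrm y}(x_2)
\]
for all $(s,t),(s',t')\in Q$. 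So the whole statement comes down to showing that $\Lambda\equiv 0$ on $Q\times Q$ if and only if $h_1h_2=k_1k_2$ in $L_2(Q)$.

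Next I would expand $\Lambda$ using bilinearity of the integral and the independence of $x_1$ and $x_2$ under $m_{\mathrm y}\times m_{\mathrm y}$: the cross terms $\mathcal Z_{h_1}(x_1)\mathcal Z_{k_2}(x_2)$ and $\mathcal Z_{k_1}(x_2)\mathcal Z_{h_2}(x_1)$ integrate to zero, so
\[
\Lambda\big((s,t),(s',t')\big)
=\int_{C_0(Q)}\mathcal Z_{h_1}(x_1;s,t)\mathcal Z_{h_2}(x_1;s',t')\,dm_{\mathrm y}(x_1)
-\int_{C_0(Q)}\mathcal Z_{k_1}(x_2;s,t)\mathcal Z_{k_2}(x_2;s',t')\,dm_{\mathrm y}(x_2).
\]
By the covariance formula \eqref{eq:coco-rel} (applied with the two kernels $h_1,h_2$ and then with $k_1,k_2$) this equals
\[
\int_0^{\min\{t,t'\}}\!\!\int_0^{\min\{s,s'\}}\!\big(h_1(\nu,\tau)h_2(\nu,\tau)-k_1(\nu,\tau)k_2(\nu,\tau)\big)\,d\nu\,d\tau.
\]
Writing $\psi=h_1h_2-k_1k_2\in L_1(Q)$, condition (i) is thus equivalent to the vanishing of $\int_0^{t}\int_0^{s}\psi=0$ for all $(s,t)\in Q$ (taking $s=s'$, $t=t'$ suffices, since the general case reduces to this).

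Finally I would argue that $\int_0^s\int_0^t\psi\,d\nu\,d\tau=0$ for all $(s,t)\in Q$ forces $\psi=0$ for $m_L^2$-a.e.\ $(s,t)$, which is exactly (ii). This is the standard fact that a function in $L_1(Q)$ whose indefinite double integral vanishes identically must vanish a.e.; it follows, e.g., from the Lebesgue differentiation theorem, or from the fact that the finite (signed) measure $\psi\,dm_L^2$ annihilates every rectangle $[0,s]\times[0,t]$, hence every rectangle, hence (by a $\pi$-$\lambda$ / monotone-class argument) every Borel set, so $\psi\,dm_L^2$ is the zero measure. The converse direction (ii)$\Rightarrow$(i) is immediate, since $\psi=0$ a.e.\ makes $\Lambda\equiv 0$. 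The only mild subtlety—and the step I would be most careful about—is the passage from equality of the indefinite integrals to the a.e.\ equality of the integrands, i.e.\ making sure the $L_2(Q)$ statement ``$h_1h_2=k_1k_2$'' is correctly extracted; everything else is a direct application of \eqref{eq:coco-rel} and the Gaussian independence criterion recorded in item (v) of the preliminaries.
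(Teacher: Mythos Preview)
Your proposal is correct and follows essentially the same route as the paper: both reduce independence of the mean-zero Gaussian processes to vanishing of the cross-covariance, expand the four terms, discard the mixed $x_1$--$x_2$ terms, and apply \eqref{eq:coco-rel} to obtain $\int_0^{\min\{t,t'\}}\int_0^{\min\{s,s'\}}(h_1h_2-k_1k_2)\,d\nu\,d\tau$. If anything, you are more explicit than the paper at the final step, where the author simply writes ``From this we can obtain the desired result,'' whereas you spell out the passage from vanishing of the indefinite double integral to $h_1h_2=k_1k_2$ a.e.\ via a monotone-class / Lebesgue differentiation argument.
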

\begin{proof}
Since the processes $\mathcal G_{h_1,k_1}$ and $\mathcal G_{h_2,k_2}$  
are Gaussian with mean zero,
we know that $\mathcal G_{h_1,k_1}$ and $\mathcal G_{h_2,k_2}$ 
are independent processes if and only if 
\[
\int_{C_0^2(Q)}\mathcal G_{h_1,k_1}(x_1,x_2;s,t) \mathcal G_{h_2,k_2}(x_1,x_2;s',t') d (m_{\mathrm y}\times m_{\mathrm y})(x_1,x_2)=0
\]
for all $(s, t)$ and $(s',t')$  in $Q$.
But, using equation \eqref{eq:coco-rel},  it follows that
\[
\begin{aligned}
&\int_{C_0^2(Q)}\mathcal G_{h_1,k_1}(x_1,x_2;s,t) \mathcal G_{h_2,k_2}(x_1,x_2;s',t') d (m_{\mathrm y}\times m_{\mathrm y})(x_1,x_2)\\
& =\int_{C_0^2(Q)}\Big\{\mathcal Y_{h_1} (x_1;s,t)\mathcal Y_{h_2} (x_1;s',t') \\
& \qquad \qquad \quad
                        -\mathcal Y_{h_1} (x_1;s,t)\mathcal{Y}_{k_2}(x_2;s',t') \\
& \qquad \qquad \quad
     +\mathcal{Y}_{k_1}(x_2;s,t)\mathcal Y_{h_2} (x_1;s',t') \\
& \qquad \qquad \quad
      -\mathcal{Y}_{k_1}(x_2;s,t)\mathcal{Y}_{k_2}(x_2;s',t')\Big\} 
 d (m_{\mathrm y}\times m_{\mathrm y})(x_1,x_2) \\
&=\int_0^{\min\{t,t'\}}\int_0^{\min\{s,s'\}} h_1(\nu,\tau)h_2(\nu,\tau)d\nu d\tau\\
&\,\,\,
-\int_0^{\min\{t,t'\}}\int_0^{\min\{s,s'\}} k_1(\nu,\tau)k_2(\nu,\tau)d\nu d\tau.
\end{aligned}
\]
From this we can obtain the desired result.
 \qed\end{proof}

\renewcommand{\thelemma}{\thesection.4}

\begin{lemma}\label{lem:process-lemma2}
Given  two functions $h$ and   $k$  in $\mathrm{Supp}_{BV}(Q)$,
let   stochastic processes 
\[
\mathcal E_{h,k}^+, \,\, \mathcal E_{h,k}^-: C_0(Q)\times C_0(Q)\times Q \to\mathbb R
\] 
be given by
\[
\mathcal E_{h,k}^+(x_1,x_2;s,t)=\mathcal Y_{h} (x_1;s,t)+\mathcal Y_{k}(x_2;s,t)
\]
and
\[
\mathcal E_{h,k}^-(x_1,x_2;s,t)=\mathcal Y_{h} (x_1;s,t)-\mathcal Y_{k}(x_2;s,t),
\]
respectively.  Then the three processes   $\mathcal E_{h,k}^+$,   $\mathcal E_{h,k}^-$
and $\mathcal Y_{\mathbf{s}(h,k)}$ are mutually equivalent with the normal distribution
$\mathcal N(0, \beta_{h,k}(\cdot,\cdot))$ where  
\[
\beta_{h,k}(s,t)=\int_0^t\int_0^s \mathbf{s}^2(h,k)(\nu,\tau)d\nu d\tau. 
\]
\end{lemma}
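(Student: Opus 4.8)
The plan is to recognize that each of the three objects $\mathcal E^{+}_{h,k}$, $\mathcal E^{-}_{h,k}$ and $\mathcal Z_{\mathbf{s}(h,k)}$ is a mean-zero, two-parameter Gaussian process on $Q$ whose sample paths lie in $C_0(Q)$, and that such a process is completely determined, as a Borel probability measure on $C_0(Q)$, by its covariance kernel. Sample-path continuity (and vanishing on the axes) holds because $\mathbf{s}(h,k)\in\mathrm{Supp}_{BV}(Q)$ gives $\mathcal Z_{\mathbf{s}(h,k)}(x;\cdot,\cdot)\in C_0(Q)$, while $\mathcal E^{\pm}_{h,k}(x_1,x_2;\cdot,\cdot)=\mathcal Z_h(x_1;\cdot,\cdot)\pm\mathcal Z_k(x_2;\cdot,\cdot)$ is a sum of two such functions. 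Joint normality of the finite-dimensional marginals is immediate: any finite linear combination of the values of $\mathcal Z_h(x;\cdot,\cdot)$ is a single PWZ integral $\langle v,x\rangle$, hence Gaussian by fact (iv), and for $\mathcal E^{\pm}_{h,k}$ a finite linear combination has the form $\langle v_1,x_1\rangle\pm\langle v_2,x_2\rangle$ with $x_1,x_2$ independent under $m_{\mathrm y}\times m_{\mathrm y}$, again Gaussian. So the lemma reduces to checking that the three covariance functions agree, and the common value is precisely the Gaussian measure denoted $\mathcal N(0,\beta_{h,k}(\cdot,\cdot))$.

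Next I would run the covariance computations, which are the concrete content of the proof. For $\mathcal Z_{\mathbf{s}(h,k)}$ the covariance formula recorded in Section~\ref{sec:pre} gives
\[
\int_{C_0(Q)}\mathcal Z_{\mathbf{s}(h,k)}(x;s,t)\,\mathcal Z_{\mathbf{s}(h,k)}(x;s',t')\,dm_{\mathrm y}(x)
=\int_0^{\min\{t,t'\}}\!\!\int_0^{\min\{s,s'\}}\mathbf{s}^2(h,k)(\nu,\tau)\,d\nu\,d\tau
=\beta_{h,k}\big(\min\{s,s'\},\min\{t,t'\}\big).
\]
For $\mathcal E^{\pm}_{h,k}$ I would expand $\mathcal E^{\pm}_{h,k}(x_1,x_2;s,t)\,\mathcal E^{\pm}_{h,k}(x_1,x_2;s',t')$ into four terms and integrate against $m_{\mathrm y}\times m_{\mathrm y}$; the two mixed terms, each a product of $\mathcal Z_h(x_1;\cdot)$ and $\mathcal Z_k(x_2;\cdot)$, factor by independence of $x_1$ and $x_2$ and vanish since each factor has mean zero (the $\mp$ sign only affects these terms), while the remaining two terms contribute $\int_0^{\min\{t,t'\}}\int_0^{\min\{s,s'\}}h^2$ and $\int_0^{\min\{t,t'\}}\int_0^{\min\{s,s'\}}k^2$, using fact (v). By \eqref{eq:fn-rot} their sum is $\int_0^{\min\{t,t'\}}\int_0^{\min\{s,s'\}}\mathbf{s}^2(h,k)=\beta_{h,k}(\min\{s,s'\},\min\{t,t'\})$, the same expression for both $\mathcal E^{+}$ and $\mathcal E^{-}$ and the same as for $\mathcal Z_{\mathbf{s}(h,k)}$.

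Having matched all three covariance kernels, I would conclude that the three processes induce one and the same Gaussian measure $\mathcal N(0,\beta_{h,k}(\cdot,\cdot))$ on $C_0(Q)$, hence are mutually equivalent (indeed equidistributed). The only point that requires a word of justification — and the mild \emph{obstacle} here — is the step that upgrades equality of covariance functions to equality of the induced measures on $C_0(Q)$; this rests on the joint normality of the finite-dimensional distributions together with sample-path continuity on the compact rectangle $Q$, both of which are in hand from facts (iv)--(v) and from the hypothesis $h,k,\mathbf{s}(h,k)\in\mathrm{Supp}_{BV}(Q)$, so that a mean-zero continuous Gaussian field on $Q$ is uniquely pinned down by its covariance kernel.
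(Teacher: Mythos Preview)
Your proof is correct. The paper states this lemma without proof; your argument via matching covariance kernels---using independence of $x_1,x_2$ under $m_{\mathrm y}\times m_{\mathrm y}$ to kill the cross terms, then invoking $h^2+k^2=\mathbf{s}^2(h,k)$ from \eqref{eq:fn-rot}---is the standard route and is exactly what the paper is taking for granted.
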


\renewcommand{\thetheorem}{\thesection.5}

\begin{theorem} \label{thm:gfft-gcp-compose}
Let $k_1$, $k_2$, $F$,  and $G$ be as in Theorem \ref{thm:gcp},
and let $h$ be a  function in $\mathrm{Supp}_{BV}(Q)$. Assume that $h^2=k_1k_2$ $m_L^2$-a.e. 
on $Q$. Then,  for  all nonzero real  $q$, 
\begin{equation}\label{eq:gfft-gcp}
T_{q,h}^{(1)}\big((F*G)_q^{(k_1,k_2)}\big)(y) 
= T_{q,\mathbf{s}(h,k_1)/\sqrt2}^{(1)}(F)\bigg(\frac{y}{\sqrt2}\bigg)
T_{q,\mathbf{s}(h,k_2)/\sqrt2}^{(1)}(G)\bigg(\frac{y}{\sqrt2}\bigg)  
\end{equation}
for s-a.e. $y\in C_{0}(Q)$, where 
$\mathbf{s}(h,k_j)$'s, $j\in \{1,2\}$, are  functions in $\mathrm{Supp}_{BV}(Q)$ 
which satisfy the relation
\eqref{eq:fn-rot} with $h_1$ and $h_2$ replaced with
$h$ and $k_j$, $j\in \{1,2\}$, respectively. 

In particular, it follows that
\[
T_{q,h}^{(1)} \big((F*G)_{q}^{(h,h)} \big)(y)
=T_{q,h}^{(1)}(F)\bigg(\frac{y}{\sqrt2}\bigg)
 T_{q,h}^{(1)}(G)\bigg(\frac{y}{\sqrt2}\bigg) 
\]
for s-a.e. $y\in C_{0}(Q)$. 
\end{theorem}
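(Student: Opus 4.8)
The plan is to compute both sides of \eqref{eq:gfft-gcp} explicitly using the integral representations already at our disposal and to match them term by term. First I would start from Theorem \ref{thm:gcp}: since $F,G\in\mathcal S(L_2(Q))$ with associated measures $f,g$, the GCP $(F*G)_q^{(k_1,k_2)}$ is again in $\mathcal S(L_2(Q))$ and is given by \eqref{eq:GCP-yw} with associated measure $\varphi_c^{k_1,k_2}=\varphi^{k_1,k_2}\circ\phi^{-1}$, where $\phi(u,v)=(u+v)/\sqrt2$ and $\varphi^{k_1,k_2}$ has density $\exp\{-\tfrac{i}{4q}\|uk_1-vk_2\|_2^2\}$ with respect to $f\times g$ on $L_2^2(Q)$. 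Then I would apply Theorem \ref{thm:gfft} to this functional (with the kernel $h$), which multiplies the density by the extra factor $\exp\{-\tfrac{i}{2q}\|wh\|_2^2\}$ and leaves the result in $\mathcal S(L_2(Q))$. Carrying out the change of variables $w=(u+v)/\sqrt2$, the left side of \eqref{eq:gfft-gcp} becomes
\[
T_{q,h}^{(1)}\big((F*G)_q^{(k_1,k_2)}\big)(y)
=\int_{L_2^2(Q)}\exp\bigg\{\frac{i}{\sqrt2}\langle{u+v,y}\rangle\bigg\}
\exp\bigg\{-\frac{i}{2q}\Big(\tfrac12\|(u+v)h\|_2^2+\tfrac12\|uk_1-vk_2\|_2^2\Big)\bigg\}df(u)dg(v).
\]

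The crux of the argument is the algebraic identity that splits the quadratic form in the exponent into a part depending only on $u$ and a part depending only on $v$. Expanding,
\[
\tfrac12\|(u+v)h\|_2^2+\tfrac12\|uk_1-vk_2\|_2^2
=\tfrac12\|uh\|_2^2+\tfrac12\|vh\|_2^2+(uh,vh)_2
+\tfrac12\|uk_1\|_2^2+\tfrac12\|vk_2\|_2^2-(uk_1,vk_2)_2,
\]
and the cross terms are $(uh,vh)_2-(uk_1,vk_2)_2=\int_Q uv(h^2-k_1k_2)$, which vanishes precisely by the hypothesis $h^2=k_1k_2$ $m_L^2$-a.e. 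Hence the exponent collapses to $\tfrac12\|u\,\mathbf{s}(h,k_1)\|_2^2+\tfrac12\|v\,\mathbf{s}(h,k_2)\|_2^2$ after using \eqref{eq:fn-rot}, i.e. $\mathbf{s}^2(h,k_j)=h^2+k_j^2$, so that $\tfrac12(\|uh\|_2^2+\|uk_1\|_2^2)=\tfrac12\|u\,\mathbf{s}(h,k_1)\|_2^2$ and similarly for $v$. The integral factors as a product of two integrals over $L_2(Q)$, and recognizing each factor via Theorem \ref{thm:gfft} (with kernel $\mathbf{s}(h,k_j)/\sqrt2$, evaluated at $y/\sqrt2$, so that $\langle{u,y/\sqrt2}\rangle$ and the density factor $\exp\{-\tfrac{i}{2q}\|u\,\mathbf{s}(h,k_1)/\sqrt2\|_2^2\}=\exp\{-\tfrac{i}{4q}\|u\,\mathbf{s}(h,k_1)\|_2^2\}$ match) yields the right-hand side of \eqref{eq:gfft-gcp}.

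The main obstacle is bookkeeping rather than anything deep: one must be careful that the normalization $1/\sqrt2$ appearing in the convolution, the $1/\sqrt2$ in the kernels $\mathbf{s}(h,k_j)/\sqrt2$, and the evaluation point $y/\sqrt2$ all line up so that the $q$-dependent factors come out as $\exp\{-\tfrac{i}{4q}\|u\,\mathbf{s}(h,k_j)\|_2^2\}$ on both sides; the factor of $\tfrac12$ from $\|(u+v)/\sqrt2\cdot(\ )\|_2^2$ is what makes this work. One should also note that all the interchanges of integration are justified since $|f|$ and $|g|$ are finite measures and every exponential factor has modulus $\le 1$ when $q\in\mathbb R\setminus\{0\}$, exactly as in the proof of Theorem \ref{prop:F-int}. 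Finally, the particular case follows by taking $k_1=k_2=h$: then $h^2=k_1k_2$ holds trivially, $\mathbf{s}(h,h)=\sqrt2\,h$, so $\mathbf{s}(h,k_j)/\sqrt2=h$, and \eqref{eq:gfft-gcp} reduces to the displayed identity.
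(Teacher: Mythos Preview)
Your argument is correct and complete. However, it follows a different route from the paper's own proof. The paper argues by invoking Lemmas~\ref{lem:process-lemma1} and~\ref{lem:process-lemma2}: under the hypothesis $h^2=k_1k_2$, the two Gaussian processes $(x_1,x_2)\mapsto\mathcal Z_h(x_1;\cdot,\cdot)+\mathcal Z_{k_1}(x_2;\cdot,\cdot)$ and $(x_1,x_2)\mapsto\mathcal Z_h(x_1;\cdot,\cdot)-\mathcal Z_{k_2}(x_2;\cdot,\cdot)$ are independent, and each is equal in law to $\mathcal Z_{\mathbf s(h,k_j)}$; this allows the iterated analytic integral defining $T_{q,h}^{(1)}\bigl((F*G)_q^{(k_1,k_2)}\bigr)$ to factor at the level of Gaussian paths, without ever writing down the explicit densities. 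Your proof instead carries out the direct computation with the measures $f,g$ and the formulas \eqref{eq:t1q-yw} and \eqref{eq:GCP-yw}; the key step---cancellation of the cross term $\int_Q uv(h^2-k_1k_2)$---is precisely the analytic shadow of the independence statement in Lemma~\ref{lem:process-lemma1}. The paper in fact acknowledges your route in the remark immediately following the theorem (``one can prove equation \eqref{eq:gfft-gcp} using \eqref{eq:GCP-yw}, \eqref{eq:t1q-yw} and direct calculations''). What the process-level argument buys is conceptual clarity and a proof that transfers verbatim to other function classes beyond $\mathcal S(L_2(Q))$; what your computation buys is a self-contained verification that does not appeal to the external reference \cite{HPS97-2}.
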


\begin{proof}
By Theorems \ref{thm:gcp} and \ref{thm:gfft}, $T_{q,h}^{(1)}((F*G)_q^{(k_1,k_2)})$ belongs to the Banach algebra
$\mathcal S(L_2(Q))$ for all  nonzero real $q$.
Thus, the proof given in \cite[Theorem 3.3]{HPS97-2} with the current hypotheses on $C_0(Q)$ and with Lemmas 
 \ref{lem:process-lemma1} and \ref{lem:process-lemma2} also works here.
 \qed\end{proof}

\renewcommand{\theremark}{\thesection.6}

\begin{remark}
Equation  \eqref{eq:gfft-gcp} above is useful in that it permits 
one to calculate the GFYFT of the GCP of functionals on $C_0(Q)$
without actually calculating the GCP $(F*G)_q^{(k_1,k_2)}$.
\end{remark}

\renewcommand{\theremark}{\thesection.7}

\begin{remark} 
Under the assumptions as given in Theorem \ref{thm:gfft-gcp-compose}, we can prove the
equation \eqref{eq:gfft-gcp} using    \eqref{eq:GCP-yw}, \eqref{eq:t1q-yw}  and
direct calculations, but it is  tedious.
\end{remark}

   Choosing  $h=k_1=k_2\equiv 1$ in  equation \eqref{eq:gfft-gcp},
we have the following relationship
between the ordinary Fourier--Yeh--Feynman transform and the ordinary CP on $C_0(Q)$.

\renewcommand{\thecorollary}{\thesection.8}

\begin{corollary} 
Let $F$ and $G$ be as in Theorem  \ref{thm:gcp}. Then for all  
 real $q\in\mathbb R\setminus\{0\}$,
\[
T_q^{(1)} \big((F*G)_q\big)(y)
=T_q^{(1)}(F)\bigg(\frac{y}{\sqrt2}\bigg)
 T_q^{(1)}(G)\bigg(\frac{y}{\sqrt2}\bigg) 
\]
for s-a.e. $y\in C_{0}(Q)$, where $T_q^{(1)}(F)\equiv T_{q,1}^{(1)}(F)$ denotes the ordinary 
Fourier--Yeh--Feynman transform of $F$ and $(F*G)_q\equiv (F*G)_q^{(1,1)}$ 
denotes the corresponding CP of $F$ and $G$.
\end{corollary}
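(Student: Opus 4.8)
The plan is to obtain this corollary as the degenerate case $h=k_1=k_2\equiv 1$ of Theorem \ref{thm:gfft-gcp-compose}; no fresh analytic-continuation or Fubini argument is needed, and the only work is to check that the hypotheses collapse correctly and that the specialized transform and convolution coincide with their classical counterparts.

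First I would record that the constant function $1$ on $Q$ lies in $\mathrm{Supp}_{BV}(Q)$, and that with $h=k_1=k_2\equiv 1$ the standing hypothesis $h^2=k_1k_2$ of Theorem \ref{thm:gfft-gcp-compose} holds since $1=1\cdot 1$ $m_L^2$-a.e.\ on $Q$. Next I would evaluate the auxiliary kernels appearing on the right-hand side of \eqref{eq:gfft-gcp}: by \eqref{eq:fn-rot}, any admissible choice of $\mathbf{s}(h,k_j)$ with $h=k_j\equiv 1$ must satisfy $\mathbf{s}^2(h,k_j)=1^2+1^2=2$ $m_L^2$-a.e., so one may take $\mathbf{s}(h,k_j)\equiv\sqrt2$ in $\mathrm{Supp}_{BV}(Q)$ and hence $\mathbf{s}(h,k_j)/\sqrt2\equiv 1$ for $j\in\{1,2\}$ (the sign is immaterial: the choice $\mathbf{s}(h,k_j)\equiv-\sqrt2$ produces the kernel $-1$, and $T_{q,g}^{(1)}\approx T_{q,-g}^{(1)}$ by the consequence of \eqref{meanzero} noted after the definition of the GFYFT).

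Then I would invoke the two identifications already recorded in the excerpt: Remark \ref{rem:t1q}, which gives $T_{q,1}^{(1)}(F)=T_q^{(1)}(F)$, the ordinary Fourier--Yeh--Feynman transform, and Remark \ref{remark:ordinary-cp}, which gives $(F*G)_q^{(1,1)}=(F*G)_q$, the ordinary convolution product \eqref{old-cp-w}. Substituting $h=k_1=k_2\equiv 1$ into \eqref{eq:gfft-gcp} and applying these identifications yields
\[
T_q^{(1)}\big((F*G)_q\big)(y)=T_q^{(1)}(F)\bigg(\frac{y}{\sqrt2}\bigg)\,T_q^{(1)}(G)\bigg(\frac{y}{\sqrt2}\bigg)
\]
for s-a.e.\ $y\in C_0(Q)$, which is the claim; equivalently, one may specialize directly the ``in particular'' clause of Theorem \ref{thm:gfft-gcp-compose} at $h\equiv 1$, where no $\mathbf{s}$ appears at all. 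There is no substantive obstacle here, since the corollary is a pure specialization; the only mild care needed is to observe that the finitely many scale-invariant null sets arising from Theorem \ref{thm:gfft-gcp-compose} and from the above identifications combine into a single scale-invariant null set, which is immediate.
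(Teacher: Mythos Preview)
Your proposal is correct and follows exactly the paper's approach: the paper derives this corollary by the one-line observation ``Choosing $h=k_1=k_2\equiv 1$ in equation \eqref{eq:gfft-gcp},'' and you have spelled out precisely this specialization together with the identifications from Remarks \ref{rem:t1q} and \ref{remark:ordinary-cp}. Your additional care in computing $\mathbf{s}(h,k_j)/\sqrt2\equiv 1$ and noting the sign-irrelevance via \eqref{meanzero} is more detail than the paper provides, but entirely in the same spirit.
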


\par
We now present a  simple example for the assumption in Theorem \ref{thm:gfft-gcp-compose}.
Let  $k_1$, $k_2$ and $h$ be  given by 
\[
\begin{aligned}
k_1(s,t)& =4\sin^2\bigg(\frac{2\pi s}{T}\bigg) \sin^2 \bigg(\frac{2\pi t}{T}\bigg),\\
k_2(s,t)& =4\cos^2\bigg(\frac{2\pi s}{T}\bigg) \cos^2 \bigg(\frac{2\pi t}{T}\bigg)
\end{aligned}
\]
and 
\[
h(t)=\sin\bigg(\frac{4\pi s}{T}\bigg)\sin \bigg( \frac{4\pi t}{T}\bigg)
\]
on $Q$,  respectively. Then it follows that
\[
\begin{aligned}
k_1(s,t)k_2(s,t)
&=16\sin^2\bigg(\frac{2\pi s}{T}\bigg)\cos^2 \bigg(\frac{2\pi s}{T}\bigg)
   \sin^2\bigg(\frac{2\pi t}{T}\bigg)\cos^2 \bigg(\frac{2\pi t}{T}\bigg)\\
&=  \sin^2\bigg(\frac{4\pi s}{T}\bigg)  \sin^2\bigg(\frac{4\pi t}{T}\bigg)\\
&= h^2(s,t).
\end{aligned}
\]
for all $(s,t)\in Q$.

\par
In view of Theorems \ref{thm:ind01-2019} and  \ref{thm:gfft-gcp-compose},
we obtain the following corollary.

\renewcommand{\thecorollary}{\thesection.9}

\begin{corollary} \label{thm:iter-gfft-gcp-compose}
Let $k_1$, $k_2$, $F$ and $G$ be as in Theorem \ref{thm:gcp}
and let $\mathcal H=\{h_1,\ldots,h_n\}$ be  a  set of  functions
in $\mathrm{Supp}_{BV}(Q)$. Assume that  
\[
\mathbf{s}^2(h_1,\ldots,h_n)=k_1k_2
\]
 $m_L^2$-a.e. on $Q$. 
Then,  for   all  nonzero real  $q$, 
\[
\begin{aligned}
&T_{q,h_n}^{(1)}\big(T_{q,h_{n-1}}^{(1)}\big(
   \cdots\big(T_{q,h_2}^{(1)}\big(T_{q,h_1}^{(1)}\big((F*G)_q^{(k_1,k_2)}\big)\big)\big)\cdots\big)\big) (y)\\
&=T_{q,\mathbf{s}(\mathcal H)}^{(1)}\big((F*G)_q^{(k_1,k_2)}\big)(y)\\
&= T_{q,\mathbf{s}(\mathcal K_1)/\sqrt2}^{(1)}(F)\bigg(\frac{y}{\sqrt2}\bigg)
   T_{q,\mathbf{s}(\mathcal K_2)/\sqrt2}^{(1)}(G)\bigg(\frac{y}{\sqrt2}\bigg)  
\end{aligned}
\]
for s-a.e. $y\in C_{0}(Q)$, where $\mathcal K_1=\mathcal H\cup\{k_1\}$
and $\mathcal K_2=\mathcal H\cup\{k_2\}$.
\end{corollary}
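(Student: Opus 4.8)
The plan is to obtain the two displayed equalities as successive applications of the Fubini-type results already in hand, namely Theorem~\ref{thm:ind01-2019} for iterated GFYFTs and Theorem~\ref{thm:gfft-gcp-compose} for the GFYFT of a GCP, glued together by the associativity relation for the functions $\mathbf{s}(\cdot)$ recorded just after \eqref{eq:fn-rot-ind}.

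First I would note that, by Theorem~\ref{thm:gcp}, the GCP $(F*G)_q^{(k_1,k_2)}$ is itself a functional in $\mathcal S(L_2(Q))$. Consequently Theorem~\ref{thm:ind01-2019}, applied with its functional $F$ replaced by $(F*G)_q^{(k_1,k_2)}$ and with the set $\mathcal H=\{h_1,\ldots,h_n\}$, yields directly the first equality
\[
T_{q,h_n}^{(1)}\big(\cdots\big(T_{q,h_1}^{(1)}\big((F*G)_q^{(k_1,k_2)}\big)\big)\cdots\big)(y)
=T_{q,\mathbf{s}(\mathcal H)}^{(1)}\big((F*G)_q^{(k_1,k_2)}\big)(y)
\]
for s-a.e.\ $y\in C_0(Q)$, where $\mathbf{s}(\mathcal H)$ is a function in $\mathrm{Supp}_{BV}(Q)$ with $\mathbf{s}^2(\mathcal H)=h_1^2+\cdots+h_n^2$ $m_L^2$-a.e.

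Next, the standing hypothesis $\mathbf{s}^2(h_1,\ldots,h_n)=k_1k_2$ $m_L^2$-a.e.\ on $Q$ says precisely that $h:=\mathbf{s}(\mathcal H)$ satisfies $h^2=k_1k_2$ $m_L^2$-a.e., so Theorem~\ref{thm:gfft-gcp-compose} applies verbatim with this choice of $h$ and gives
\[
T_{q,\mathbf{s}(\mathcal H)}^{(1)}\big((F*G)_q^{(k_1,k_2)}\big)(y)
=T_{q,\mathbf{s}(\mathbf{s}(\mathcal H),k_1)/\sqrt2}^{(1)}(F)\Big(\tfrac{y}{\sqrt2}\Big)\,
 T_{q,\mathbf{s}(\mathbf{s}(\mathcal H),k_2)/\sqrt2}^{(1)}(G)\Big(\tfrac{y}{\sqrt2}\Big).
\]
It then remains only to match the kernels: since, for $j\in\{1,2\}$,
\[
\mathbf{s}\big(\mathbf{s}(\mathcal H),k_j\big)^2=\mathbf{s}^2(\mathcal H)+k_j^2=h_1^2+\cdots+h_n^2+k_j^2=\mathbf{s}^2(\mathcal K_j)
\]
$m_L^2$-a.e.\ on $Q$, one is free to choose the representative so that $\mathbf{s}(\mathbf{s}(\mathcal H),k_j)=\mathbf{s}(\mathcal K_j)$, and this representative again lies in $\mathrm{Supp}_{BV}(Q)$ because all the $h_i$ and $k_j$ do. Substituting into the previous display yields the second equality and finishes the proof.

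As the argument is a straightforward concatenation of Theorems~\ref{thm:gcp}, \ref{thm:ind01-2019}, and \ref{thm:gfft-gcp-compose}, I do not anticipate a serious obstacle. The only points that require care are: verifying at the outset that $(F*G)_q^{(k_1,k_2)}\in\mathcal S(L_2(Q))$ so that the iterated-transform Fubini theorem is legitimately applicable; and carrying out the choice-of-representative bookkeeping for $\mathbf{s}(\cdot)$ coherently, so that the kernels appearing after the two reductions really do coincide with $\mathbf{s}(\mathcal K_1)$ and $\mathbf{s}(\mathcal K_2)$ as asserted in the statement.
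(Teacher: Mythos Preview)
Your argument is correct and matches the paper's own approach: the corollary is obtained precisely by combining Theorems~\ref{thm:ind01-2019} and~\ref{thm:gfft-gcp-compose} (together with Theorem~\ref{thm:gcp} to ensure $(F*G)_q^{(k_1,k_2)}\in\mathcal S(L_2(Q))$), exactly as you outline. The representative-matching for $\mathbf{s}(\mathbf{s}(\mathcal H),k_j)=\mathbf{s}(\mathcal K_j)$ that you spell out is the only bookkeeping needed, and the paper handles it implicitly.
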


 \subsection{Relationship II: GCP of GFYFTs}

Our second relationship between  the GFYFT and the GCP shows that the GCP of the GFYFTs  can be represented as a single GFYFT.

\renewcommand{\thetheorem}{\thesection.10}

\begin{theorem}    \label{thm:cp-tpq02}
Let $k_1$, $k_2$, $F$,  $G$, and $h$ be as in Theorem  \ref{thm:gfft-gcp-compose}.
Then,  for  all nonzero real  $q$, 
\begin{equation}\label{eq:cp-fft-basic}
\Big(T_{q,\mathbf{s}(h,k_1)/\sqrt{2} }^{(1)} (F)*
 T_{q,\mathbf{s}(h,k_2)/\sqrt{2}}^{(1)}(G) \Big)_{-q}^{(k_1,k_2)}(y) 
=T_{q,h}^{(1)} \bigg(F \bigg(\frac{\cdot}{\sqrt2} \bigg)
 G \bigg(\frac{\cdot}{\sqrt2}\bigg)\bigg)(y)
\end{equation}
for s-a.e. $y\in C_0(Q)$, where 
$\mathbf{s}(h,k_j)$'s, $j\in \{1,2\}$, are  functions in $\mathrm{Supp}_{BV}(Q)$ 
which satisfy the relation \eqref{eq:fn-rot} with $h_1$ and $h_2$ replaced with
$h$ and $k_j$, $j\in \{1,2\}$, respectively.

In particular, it follows that
\[
\Big(T_{q,h}^{(1)}(F)*T_{q,h}^{(1)}(G)\Big)_{-q}^{(h,h)} (y)
=T_{q,h}^{(1)}\bigg(F \bigg(\frac{\cdot}{\sqrt2}\bigg)
 G \bigg(\frac{\cdot}{\sqrt2}\bigg) \bigg)(y)
\]
for s-a.e. $y\in C_{0}(Q)$.
\end{theorem}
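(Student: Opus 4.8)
The plan is to expand both sides of \eqref{eq:cp-fft-basic} into integrals of the form \eqref{eq:element} using only the closed formulas already in hand, and then to observe that the hypothesis $h^2=k_1k_2$ is precisely what makes the two resulting integrals coincide. Write $F$ and $G$ as in \eqref{eq:element}, with measures $f$ and $g$ in $\mathcal M(L_2(Q))$. First I would record the elementary identity
\[
\Big\|\,u\,\tfrac{\mathbf{s}(h,k_j)}{\sqrt2}\,\Big\|_2^2=\tfrac12\big(\|uh\|_2^2+\|uk_j\|_2^2\big),\qquad j\in\{1,2\},
\]
which is immediate from the defining relation \eqref{eq:fn-rot}. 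By Theorem \ref{thm:gfft}, the functional $P:=T_{q,\mathbf{s}(h,k_1)/\sqrt2}^{(1)}(F)$ then belongs to $\mathcal S(L_2(Q))$ with measure $\exp\{-\tfrac{i}{4q}(\|uh\|_2^2+\|uk_1\|_2^2)\}\,df(u)$, and similarly $R:=T_{q,\mathbf{s}(h,k_2)/\sqrt2}^{(1)}(G)$ has measure $\exp\{-\tfrac{i}{4q}(\|vh\|_2^2+\|vk_2\|_2^2)\}\,dg(v)$. Since $P,R\in\mathcal S(L_2(Q))$, Theorem \ref{thm:gcp} (applied with the Feynman parameter $-q$ in place of $q$) guarantees that the left-hand side of \eqref{eq:cp-fft-basic} exists, lies in $\mathcal S(L_2(Q))$, and, after unwinding the pushforward $w=(u+v)/\sqrt2$ in that formula, is given for s-a.e.\ $y$ by
\[
\begin{aligned}
\Big(P*R\Big)_{-q}^{(k_1,k_2)}(y)
&=\int_{L_2(Q)^2}\exp\Big\{i\Big\langle\tfrac{u+v}{\sqrt2},y\Big\rangle\Big\}
\exp\Big\{\tfrac{i}{4q}\|uk_1-vk_2\|_2^2\Big\}\\
&\quad\times
\exp\Big\{-\tfrac{i}{4q}\big(\|uh\|_2^2+\|uk_1\|_2^2+\|vh\|_2^2+\|vk_2\|_2^2\big)\Big\}\,df(u)\,dg(v)
\end{aligned}
\]
(the sign flip in the kernel factor is exactly what turns the parameter $q$ into $-q$). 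Expanding $\|uk_1-vk_2\|_2^2=\|uk_1\|_2^2-2(uk_1,vk_2)_2+\|vk_2\|_2^2$ collapses the $k_j$-dependent pieces, and the combined exponent becomes $-\tfrac{i}{2q}(uk_1,vk_2)_2-\tfrac{i}{4q}(\|uh\|_2^2+\|vh\|_2^2)$.

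Next I would treat the right-hand side of \eqref{eq:cp-fft-basic}. The product $F(\cdot/\sqrt2)G(\cdot/\sqrt2)$ again lies in $\mathcal S(L_2(Q))$, being represented by \eqref{eq:element} with $f$ and $g$ pushed forward under $(u,v)\mapsto(u+v)/\sqrt2$; hence Theorem \ref{thm:gfft}, now with kernel $h$, gives
\[
T_{q,h}^{(1)}\Big(F\big(\tfrac{\cdot}{\sqrt2}\big)G\big(\tfrac{\cdot}{\sqrt2}\big)\Big)(y)
=\int_{L_2(Q)^2}\exp\Big\{i\Big\langle\tfrac{u+v}{\sqrt2},y\Big\rangle\Big\}
\exp\Big\{-\tfrac{i}{2q}\Big\|\tfrac{u+v}{\sqrt2}\,h\Big\|_2^2\Big\}\,df(u)\,dg(v),
\]
and expanding $\big\|\tfrac{u+v}{\sqrt2}h\big\|_2^2=\tfrac12(\|uh\|_2^2+2(uh,vh)_2+\|vh\|_2^2)$ turns the exponent into $-\tfrac{i}{2q}(uh,vh)_2-\tfrac{i}{4q}(\|uh\|_2^2+\|vh\|_2^2)$. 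Comparing the two displays, the asserted identity \eqref{eq:cp-fft-basic} reduces to the single pointwise fact $(uk_1,vk_2)_2=(uh,vh)_2$; and since
\[
(uk_1,vk_2)_2=\int_Q u(s,t)v(s,t)\,k_1(s,t)k_2(s,t)\,ds\,dt
=\int_Q u(s,t)v(s,t)\,h^2(s,t)\,ds\,dt=(uh,vh)_2
\]
by the hypothesis $h^2=k_1k_2$ $m_L^2$-a.e.\ on $Q$, the integrands agree and therefore so do the two functionals for s-a.e.\ $y$. For the displayed special case one takes $h=k_1=k_2$; then $\mathbf{s}^2(h,h)=2h^2$, so $\mathbf{s}(h,k_j)/\sqrt2$ may be chosen with square equal to $h^2$, and $T_{q,\mathbf{s}(h,k_j)/\sqrt2}^{(1)}(F)=T_{q,h}^{(1)}(F)$ because in \eqref{eq:t1q-yw} the kernel enters only through its square.

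I expect the only real difficulty to be clerical: keeping the numerous $\tfrac1{\sqrt2}$- and $\tfrac1{4q}$-factors, and above all the direction of the analytic continuation $\lambda\to iq$ encoded in the subscript $-q$, consistent throughout; once the formulas of Theorems \ref{thm:gfft} and \ref{thm:gcp} are lined up, the cancellation and the decisive role of $h^2=k_1k_2$ are immediate. Alternatively, one can bypass the explicit measure computation and repeat, almost verbatim, the argument of Huffman--Park--Skoug--Storvick \cite{HPS97-2} for the corresponding relationship on $C_0[0,T]$ (exactly as the proof of Theorem \ref{thm:gfft-gcp-compose} does for the dual relation), with the one-parameter processes there replaced by $\mathcal Z_{k_1}$ and $\mathcal Z_{k_2}$ and with Lemmas \ref{lem:process-lemma1} and \ref{lem:process-lemma2} invoked; on that route the hypothesis $h^2=k_1k_2$ enters as the criterion for the independence of the two combined Gaussian processes in Lemma \ref{lem:process-lemma1}.
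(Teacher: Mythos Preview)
Your direct computation is correct: expanding both sides via Theorems \ref{thm:gfft} and \ref{thm:gcp} and matching the exponents does reduce \eqref{eq:cp-fft-basic} to the identity $(uk_1,vk_2)_2=(uh,vh)_2$, which is exactly $h^2=k_1k_2$. The bookkeeping with the sign flip at $-q$ and the various $1/\sqrt2$ factors is right.

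The paper, however, takes a shorter structural route that avoids any new integral computation. It simply observes that $T_{q,h}^{(1)}\circ T_{-q,h}^{(1)}$ is the identity (Corollary \ref{thm:afft-inverse}), applies this to the left-hand side, and then uses the already-proved dual relation \eqref{eq:gfft-gcp} from Theorem \ref{thm:gfft-gcp-compose} with parameter $-q$ to evaluate $T_{-q,h}^{(1)}\big((P*R)_{-q}^{(k_1,k_2)}\big)$ as a product; the inverse property collapses each factor back to $F(\cdot/\sqrt2)$ and $G(\cdot/\sqrt2)$. In this approach the hypothesis $h^2=k_1k_2$ is not invoked afresh---it is already baked into Theorem \ref{thm:gfft-gcp-compose}. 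Your approach has the virtue of being self-contained and of making the role of $h^2=k_1k_2$ completely explicit at the level of integrands; the paper's approach buys brevity and shows that \eqref{eq:cp-fft-basic} is formally dual to \eqref{eq:gfft-gcp} rather than an independent fact. Your closing ``alternative'' paragraph gestures toward adapting \cite{HPS97-2} directly, but note that the paper's actual argument is even lighter than that: it leverages Theorem \ref{thm:gfft-gcp-compose} as a black box together with the inverse transform, with no further process-level reasoning.
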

\begin{proof}
Applying \eqref{eq:inverse}, \eqref{eq:gfft-gcp} with $F$, $G$, and $q$
replaced with $T_{q,\mathbf{s}(h,k_1)/\sqrt{2} }^{(1)} (F)$, 
$T_{q,\mathbf{s}(h,k_2)/\sqrt{2}}^{(1)}(G)$,
and $-q$, respectively, and \eqref{eq:inverse} again, 
it follows that for s-a.e. $y\in C_0(Q)$,
\[
\begin{aligned}
&\Big(T_{q,\mathbf{s}(h,k_1)/\sqrt{2} }^{(1)} (F)*
 T_{q,\mathbf{s}(h,k_2)/\sqrt{2}}^{(1)}(G) \Big)_{-q}^{(k_1,k_2)}(y)\\
&= T_{q,h}^{(1)}\Big(T_{-q,h}^{(1)}\Big(\big(T_{q,\mathbf{s}(h,k_1)/\sqrt{2} }^{(1)} (F)*
 T_{q,\mathbf{s}(h,k_2)/\sqrt{2}}^{(1)}(G) \big)_{-q}^{(k_1,k_2)}\Big)\Big)(y)\\
&= T_{q,h}^{(1)}\bigg(\Big[
T_{-q,\mathbf{s}(h,k_1)/\sqrt2}^{(1)}\Big(T_{q,\mathbf{s}(h,k_1)/\sqrt{2} }^{(1)}(F)\Big)\Big]
\bigg(\frac{\cdot}{\sqrt2}\bigg)\\
&\qquad\,\,\,\,\times
\Big[T_{-q,\mathbf{s}(h,k_2)/\sqrt2}^{(1)}\Big(T_{q,\mathbf{s}(h,k_2)/\sqrt{2} }^{(1)}(G)\Big)\Big]
\bigg(\frac{\cdot}{\sqrt2}\bigg)\bigg)
(y)\\
\end{aligned}
\]
\[
\begin{aligned}
&=T_{q,h}^{(1)} \bigg(F \bigg(\frac{\cdot}{\sqrt2} \bigg)
 G \bigg(\frac{\cdot}{\sqrt2}\bigg)\bigg)(y)
\end{aligned}
\]
as desired.
 \qed\end{proof}

Letting $h=k_1=k_2\equiv 1$ in equation \eqref{eq:cp-fft-basic}, one can see that equation 
\eqref{eq:R2-coro1} below holds.

\renewcommand{\thecorollary}{\thesection.11}

\begin{corollary} 
Let $F$ and $G$ be as in Theorem  \ref{thm:gcp}. Then, for all  
real   $q\in\mathbb R\setminus\{0\}$,
\begin{equation}\label{eq:R2-coro1}
\big( T_q^{(1)}(F)*T_q^{(1)}(G)\big)_{-q} (y)
=T_q^{(1)}\bigg(F \bigg(\frac{\cdot}{\sqrt2}\bigg)
 G \bigg(\frac{\cdot}{\sqrt2}\bigg) \bigg)(y)
\end{equation}
for s-a.e. $y\in C_{0}(Q)$
\end{corollary}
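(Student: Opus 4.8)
The plan is to obtain this corollary as the degenerate case $h=k_1=k_2\equiv 1$ of Theorem~\ref{thm:cp-tpq02}. First I would verify that the hypotheses of Theorem~\ref{thm:cp-tpq02} are satisfied under this choice: the constant function $1$ clearly belongs to $\mathrm{Supp}_{BV}(Q)$, and the required relation $h^2=k_1k_2$ reduces to $1=1\cdot 1$ $m_L^2$-a.e. on $Q$, which holds trivially. Hence \eqref{eq:cp-fft-basic} is available verbatim with these functions.

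Next I would simplify the objects occurring in \eqref{eq:cp-fft-basic} under this substitution. Since $\mathbf{s}(1,1)^2=1^2+1^2=2$ $m_L^2$-a.e., one may take the representative $\mathbf{s}(h,k_1)=\mathbf{s}(h,k_2)=\sqrt 2$, so that $\mathbf{s}(h,k_j)/\sqrt 2\equiv 1$ for $j\in\{1,2\}$, and therefore $T_{q,\mathbf{s}(h,k_j)/\sqrt 2}^{(1)}=T_{q,1}^{(1)}$. By Remark~\ref{rem:t1q}, $\mathcal Z_1(x,\cdot)=x$ for all $x\in C_0(Q)$, so $T_{q,1}^{(1)}$ coincides with the ordinary Fourier--Yeh--Feynman transform $T_q^{(1)}$; and by Remark~\ref{remark:ordinary-cp} the GCP $(\,\cdot\,)_{-q}^{(1,1)}$ coincides with the ordinary convolution product $(\,\cdot\,)_{-q}$ on $C_0(Q)$.

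Substituting these identifications into \eqref{eq:cp-fft-basic} turns its left-hand side into $\big(T_q^{(1)}(F)*T_q^{(1)}(G)\big)_{-q}(y)$ and its right-hand side into $T_q^{(1)}\big(F(\cdot/\sqrt 2)G(\cdot/\sqrt 2)\big)(y)$, which is precisely \eqref{eq:R2-coro1}, valid for s-a.e. $y\in C_0(Q)$ and all nonzero real $q$. This completes the argument.

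Since every step is a direct specialization of an already-established theorem, I do not expect a genuine obstacle here; the only point that calls for a small amount of care is the non-uniqueness of $\mathbf{s}(h_1,h_2)$, so that one must explicitly select the representative $\sqrt 2$ rather than $-\sqrt 2$. This is harmless, however, because, as recorded after Definition of the GFYFT, $T_{q,h}^{(1)}(F)\approx T_{q,-h}^{(1)}(F)$ (a consequence of \eqref{meanzero}), so the choice of sign does not affect the s-a.e. identity.
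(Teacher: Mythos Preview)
Your proposal is correct and follows precisely the paper's own approach: the paper simply states that letting $h=k_1=k_2\equiv 1$ in equation \eqref{eq:cp-fft-basic} yields \eqref{eq:R2-coro1}, and your argument spells out exactly this specialization, including the identifications via Remarks~\ref{rem:t1q} and~\ref{remark:ordinary-cp}. Your added remark on the sign ambiguity of $\mathbf{s}(1,1)$ is a nice point of care that the paper does not make explicit.
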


 We next establish two types of extension    of Theorem \ref{thm:cp-tpq02} above.

\renewcommand{\thetheorem}{\thesection.12}

\begin{theorem} \label{thm:iter-gfft-gcp-compose}
Let  $k_1$, $k_2$, $F$, and $G$ be as in 
Theorem \ref{thm:gcp}, and let $\mathcal H=\{h_1,\ldots,h_n\}$ 
be  a finite sequence of    functions in $\mathrm{Supp}_{BV}(Q)$. 
Assume that  
\[
\mathbf{s}^2(\mathcal H)   \equiv \mathbf{s}^2(h_1,\ldots,h_n)=k_1k_2
\] 
for $m_L^2$-a.e. on  $Q$, where $\mathbf{s}(\mathcal H)$ is the function in $\mathrm{Supp}_{BV}(Q)$
satisfying \eqref{eq:fn-rot-ind} above. 
Then,  for  all  nonzero real  $q$, 
\begin{equation} \label{eq:multi-rel-01}
\begin{aligned}
&\Big(T_{q,k_1/\sqrt2}^{(1)}
\big(T_{q,h_n/\sqrt2}^{(1)}\big(\cdots\big(T_{q,h_2/\sqrt2}^{(1)}\big(T_{q,h_1/\sqrt2}^{(1)}(F)\big)\big)\cdots\big)\big)
\\
&\qquad\qquad  
*T_{q,k_2/\sqrt2}^{(1)}\big( T_{q,h_n/\sqrt2}^{(1)}\big( \cdots\big(T_{q,h_2/\sqrt2}^{(1)}\big(T_{q,h_1/\sqrt2}^{(1)}(G)
\big)\big)\cdots\big)\big)\Big)_{-q}^{(k_1,k_2)}(y)\\
&=\Big(T_{q, \mathbf{s}(\mathcal H,k_1)/\sqrt2}^{(1)}(F)
*T_{q, \mathbf{s}(\mathcal H,k_2)/\sqrt2}^{(1)}(G)\Big)_{-q}^{(k_1,k_2)}(y)\\
&= T_{q,\mathbf{s}(\mathcal H)}^{(1)} \bigg(F \bigg(\frac{\cdot}{\sqrt2} \bigg)
 G \bigg(\frac{\cdot}{\sqrt2}\bigg)\bigg)(y)
\end{aligned}
\end{equation}
for s-a.e. $y\in C_0(Q)$, where   $\mathbf{s}(\mathcal H,k_1)$ 
and $\mathbf{s}(\mathcal H,k_2)$
are functions in $\mathrm{Supp}_{BV}(Q)$ satisfying the relations
\[
\mathbf{s}^2(\mathcal H,k_1)
\equiv \mathbf{s}^2(h_1,\ldots,h_n,k_1)
=h_1^2 +\cdots+h_n^2 +k_1^2
\]
and
\[
\mathbf{s}(\mathcal H,k_2)^2\equiv \mathbf{s}(h_1,\ldots,h_n,k_2)^2
=h_1^2 +\cdots+h_n^2 +k_2^2 
\]
for $m_L^2$-a.e. on $Q$, respectively.
\end{theorem}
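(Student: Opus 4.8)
The plan is to obtain \eqref{eq:multi-rel-01} by composing two facts already in hand: the Fubini-type collapse of an iterated GFYFT into a single GFYFT (Theorem \ref{thm:ind01-2019}), which folds each of the two towers of transforms sitting inside the outermost convolution, and the single-GFYFT representation of the GCP of two GFYFTs (Theorem \ref{thm:cp-tpq02}), which identifies the remaining convolution product. No new estimates are required; every functional appearing below lies in $\mathcal S(L_2(Q))$, so all transforms and convolutions exist, by Theorems \ref{thm:gfft} and \ref{thm:gcp} and the remark following Corollary \ref{thm:afft-inverse}.

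First I would prove the first equality in \eqref{eq:multi-rel-01}. Fix $j\in\{1,2\}$; since $h_1,\ldots,h_n,k_j\in\mathrm{Supp}_{BV}(Q)$, the scaled functions $h_1/\sqrt2,\ldots,h_n/\sqrt2,k_j/\sqrt2$ again lie in $\mathrm{Supp}_{BV}(Q)$, so Theorem \ref{thm:ind01-2019}, applied to this $(n+1)$-term family, folds the tower $T_{q,k_j/\sqrt2}^{(1)}\big(T_{q,h_n/\sqrt2}^{(1)}\big(\cdots\big(T_{q,h_1/\sqrt2}^{(1)}(\cdot)\big)\cdots\big)\big)$ into a single GFYFT whose kernel is any function with square $\tfrac12\big(h_1^2+\cdots+h_n^2+k_j^2\big)=\tfrac12\,\mathbf{s}^2(\mathcal H,k_j)$. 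Since, by the explicit formula of Theorem \ref{thm:gfft}, the GFYFT $T_{q,h}^{(1)}$ on $\mathcal S(L_2(Q))$ depends on its kernel $h$ only through $\|uh\|_2^2$, hence only through $h^2$, this single GFYFT is $T_{q,\mathbf{s}(\mathcal H,k_j)/\sqrt2}^{(1)}$. Applying this with $j=1$ to the $F$-tower and with $j=2$ to the $G$-tower, and then substituting the two folded functionals into $(\,\cdot * \cdot\,)_{-q}^{(k_1,k_2)}$, produces exactly the middle line of \eqref{eq:multi-rel-01}.

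Next I would prove the second equality. By hypothesis $\mathbf{s}(\mathcal H)$ may be chosen in $\mathrm{Supp}_{BV}(Q)$ with $\mathbf{s}^2(\mathcal H)=k_1k_2$ $m_L^2$-a.e. on $Q$, so $h:=\mathbf{s}(\mathcal H)$ satisfies the hypotheses of Theorem \ref{thm:cp-tpq02}, giving
\[
\begin{aligned}
&\Big(T_{q,\mathbf{s}(h,k_1)/\sqrt2}^{(1)}(F)*T_{q,\mathbf{s}(h,k_2)/\sqrt2}^{(1)}(G)\Big)_{-q}^{(k_1,k_2)}(y)\\
&\qquad=T_{q,h}^{(1)}\big(F(\cdot/\sqrt2)\,G(\cdot/\sqrt2)\big)(y).
\end{aligned}
\]
For $j=1,2$ we have $\mathbf{s}(h,k_j)^2=\mathbf{s}^2(\mathcal H)+k_j^2=h_1^2+\cdots+h_n^2+k_j^2=\mathbf{s}(\mathcal H,k_j)^2$ — this is the link identity $\mathbf{s}(\mathbf{s}(h_1,\ldots,h_n),k_j)=\mathbf{s}(h_1,\ldots,h_n,k_j)$ recorded among the conventions on the functions $\mathbf{s}(\cdot)$ — so, again because each GFYFT depends on its kernel only through the square, $T_{q,\mathbf{s}(h,k_j)/\sqrt2}^{(1)}(\cdot)=T_{q,\mathbf{s}(\mathcal H,k_j)/\sqrt2}^{(1)}(\cdot)$, while the convolution superscript $(k_1,k_2)$ is unchanged. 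Hence the left-hand side above is the middle line of \eqref{eq:multi-rel-01}, and the right-hand side is $T_{q,\mathbf{s}(\mathcal H)}^{(1)}\big(F(\cdot/\sqrt2)G(\cdot/\sqrt2)\big)(y)$, the last line; chaining the two equalities then yields \eqref{eq:multi-rel-01}.

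The only step wanting genuine care — more bookkeeping than true obstacle — is the interplay of the $1/\sqrt2$ scalings with the non-uniqueness of the auxiliary functions $\mathbf{s}(\cdot)$: one must check that $\mathbf{s}^2$ of the scaled family $\{h_i/\sqrt2\}\cup\{k_j/\sqrt2\}$ equals $\tfrac12\,\mathbf{s}^2(\mathcal H,k_j)$, and then invoke that replacing an admissible kernel $\mathbf{s}$ by any other function with the same square leaves every GFYFT of a member of $\mathcal S(L_2(Q))$ unchanged (from the integral representation of Theorem \ref{thm:gfft}). Once that is settled, the order of iteration in Theorem \ref{thm:ind01-2019} matches the order of the two towers in \eqref{eq:multi-rel-01} term by term, and the argument is just this two-step composition with no residual analysis.
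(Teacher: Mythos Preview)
Your proposal is correct and follows essentially the same two-step route as the paper: apply the Fubini-type collapse of Theorem~\ref{thm:ind01-2019} (equation~\eqref{eq:gfft-n-fubini}) to each tower to obtain the first equality, then invoke Theorem~\ref{thm:cp-tpq02} (equation~\eqref{eq:cp-fft-basic}) with $h=\mathbf{s}(\mathcal H)$ for the second. Your explicit tracking of the $1/\sqrt2$ scalings and of the fact that $T_{q,h}^{(1)}$ depends on $h$ only through $h^2$ is a welcome clarification of bookkeeping the paper leaves implicit, but the underlying argument is the same.
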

 \begin{proof}
Applying  \eqref{eq:gfft-n-fubini}, the first equality of \eqref{eq:multi-rel-01}
follows immediately. Next using \eqref{eq:cp-fft-basic} with $h$ replaced with $\mathbf{s}(\mathcal H)$,
the second equality of \eqref{eq:multi-rel-01} also follows.
 \qed\end{proof}

   \par
 In view of equations \eqref{eq:cp-fft-basic} and \eqref{eq:gfft-n-fubini},
 we also obtain the following assertion.

\renewcommand{\thetheorem}{\thesection.13}

\begin{theorem} \label{thm:iter-gfft-gcp-compose-2nd}
Let $F$ and $G$ be as in Theorem  \ref{thm:gcp}. 
Given a  function  $h$ in $\mathrm{Supp}_{BV}(Q)$
and finite sequences $\mathcal K_1=\{k_{11},k_{12},\ldots,k_{1n}\}$
and  $\mathcal K_2=\{k_{21},k_{22}$, $\ldots,k_{2m}\}$ of    functions in $\mathrm{Supp}_{BV}(Q)$, 
assume that  
\[
h^2=\mathbf{s}(\mathcal K_1)\mathbf{s}(\mathcal K_2)
\] 
for $m_L^2$-a.e. on $Q$. 
Then,  for all   nonzero real   $q$, 
\[
\begin{aligned}
&\Big(T_{q,h/\sqrt2}^{(1)}
\big(T_{q,k_{1n}/\sqrt2}^{(1)}   
\big(\cdots
\big(T_{q,k_{12}/\sqrt2}^{(1)}\big(T_{q,k_{11}/\sqrt2}^{(1)}(F)\big)\big)\cdots\big)\big) \\
&\quad 
*T_{q,h/\sqrt2}^{(1)}\big( T_{q,k_{2m}/\sqrt2}^{(1)}  
\big( \cdots
\big(T_{q,k_{22}/\sqrt2}^{(1)}\big(T_{q,k_{21}/\sqrt2}^{(1)}(G)\big)\big)\cdots\big)\big)\Big)_{-q}^{(\mathbf{s}(\mathcal K_1),\mathbf{s}(\mathcal K_2))}(y)\\
&=\Big(T_{q,h/\sqrt2}^{(1)}\big(T_{q,\mathbf{s}(\mathcal K_1)/\sqrt2}^{(1)}(F)\big) 
*T_{q,h/\sqrt2}^{(1)}\big( T_{q,\mathbf{s}(\mathcal K_2)/\sqrt2}^{(1)}(G)\big) \Big)_{-q}^{(\mathbf{s}(\mathcal K_1),\mathbf{s}(\mathcal K_2))}(y)\\
&=\Big(T_{q, \mathbf{s}(h,\mathbf{s}(\mathcal K_1))/\sqrt2}^{(1)}(F)
*T_{q,\mathbf{s}(h,\mathbf{s}(\mathcal K_2))/\sqrt2}^{(1)}(G)\Big)_{-q}^{(\mathbf{s}(\mathcal K_1),\mathbf{s}(\mathcal K_2))}(y)\\
&= T_{q,h}^{(1)} \bigg(F \bigg(\frac{\cdot}{\sqrt2} \bigg)
 G \bigg(\frac{\cdot}{\sqrt2}\bigg)\bigg)(y)
\end{aligned}
\]
for s-a.e. $y\in C_0(Q)$, where 
$\mathbf{s}(h,\mathbf{s}(\mathcal K_1))$, and $\mathbf{s}(h,\mathbf{s}(\mathcal K_2))$
are functions in $\mathrm{Supp}_{BV}(Q)$ satisfying the relations
\[
\mathbf{s}^2(h,\mathbf{s}(\mathcal K_1))
=h^2 +\mathbf{s}^2(\mathcal K_1)=h^2+k_{11}^2 + \cdots+k_{1n}^2, 
\]
and
\[
\mathbf{s}^2(h,\mathbf{s}(\mathcal K_2))
=h^2 +\mathbf{s}^2(\mathcal K_2)=h^2+k_{21}^2   +\cdots+k_{2m}^2
\]
for $m_L^2$-a.e. on $Q$, respectively.
\end{theorem}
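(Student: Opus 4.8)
The plan is to obtain the four expressions in the displayed chain by two applications of the composition identity \eqref{eq:gfft-n-fubini} (Theorem~\ref{thm:ind01-2019}) followed by one application of \eqref{eq:cp-fft-basic} (Theorem~\ref{thm:cp-tpq02}). The only auxiliary fact needed is the elementary scaling behaviour of the symbol $\mathbf{s}(\cdot)$: if $\mathbf{t}$ satisfies $\mathbf{t}^2 = v_1^2+\cdots+v_k^2$, then $(c\,\mathbf{t})^2 = (cv_1)^2+\cdots+(cv_k)^2$, so $c\,\mathbf{t}$ is an admissible value of $\mathbf{s}(cv_1,\ldots,cv_k)$; in particular one may take $\mathbf{s}(k_{11}/\sqrt2,\ldots,k_{1n}/\sqrt2) = \mathbf{s}(\mathcal K_1)/\sqrt2$ and $\mathbf{s}(k_{21}/\sqrt2,\ldots,k_{2m}/\sqrt2) = \mathbf{s}(\mathcal K_2)/\sqrt2$. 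Throughout, $F$, $G$, $\mathbf{s}(\mathcal K_1)$ and $\mathbf{s}(\mathcal K_2)$ satisfy the standing hypotheses of Theorems~\ref{thm:ind01-2019}, \ref{thm:cp-tpq02} and \ref{thm:gcp} (the latter because $\mathbf{s}(\mathcal K_j)$ may be taken in $\mathrm{Supp}_{BV}(Q)$, by the observation following \eqref{eq:fn-rot-ind}), so every intermediate functional stays in $\mathcal S(L_2(Q))$ by Theorems~\ref{thm:gfft} and \ref{thm:gcp} and no existence or measurability issue arises.

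For the first equality I would apply \eqref{eq:gfft-n-fubini} to the inner $n$-fold iterated GFYFT $T_{q,k_{1n}/\sqrt2}^{(1)}(\cdots(T_{q,k_{11}/\sqrt2}^{(1)}(F))\cdots)$ with the set $\{k_{11}/\sqrt2,\ldots,k_{1n}/\sqrt2\}$, collapsing it to $T_{q,\mathbf{s}(\mathcal K_1)/\sqrt2}^{(1)}(F)$, and likewise collapse the $m$-fold iterated GFYFT acting on $G$ to $T_{q,\mathbf{s}(\mathcal K_2)/\sqrt2}^{(1)}(G)$. Substituting these two identities into the outer GCP, whose superscript $(\mathbf{s}(\mathcal K_1),\mathbf{s}(\mathcal K_2))$ is unaffected, yields the second expression.

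For the second equality I would use \eqref{eq:gfft-n-fubini} again, now with $n=2$: composing $T_{q,h/\sqrt2}^{(1)}$ with $T_{q,\mathbf{s}(\mathcal K_1)/\sqrt2}^{(1)}$ gives $T_{q,\mathbf{s}(\mathbf{s}(\mathcal K_1)/\sqrt2,\,h/\sqrt2)}^{(1)}(F)$, and since $\mathbf{s}^2(\mathbf{s}(\mathcal K_1)/\sqrt2,\,h/\sqrt2) = \tfrac12(\mathbf{s}^2(\mathcal K_1)+h^2) = \tfrac12\,\mathbf{s}^2(h,\mathbf{s}(\mathcal K_1))$ the argument function may be written $\mathbf{s}(h,\mathbf{s}(\mathcal K_1))/\sqrt2$; the same computation with $\mathcal K_2$ in place of $\mathcal K_1$ handles the $G$-factor. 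Finally, the third equality is precisely \eqref{eq:cp-fft-basic} of Theorem~\ref{thm:cp-tpq02} applied with the functions there denoted $k_1$ and $k_2$ taken to be $\mathbf{s}(\mathcal K_1)$ and $\mathbf{s}(\mathcal K_2)$: the standing hypothesis $h^2 = \mathbf{s}(\mathcal K_1)\mathbf{s}(\mathcal K_2)$ is exactly the requirement $h^2 = k_1k_2$ needed there, and the functions $\mathbf{s}(h,k_1)$, $\mathbf{s}(h,k_2)$ appearing in \eqref{eq:cp-fft-basic} then coincide (up to sign and $m_L^2$-a.e.\ equality, which is all that the Yeh--Feynman integral sees) with $\mathbf{s}(h,\mathbf{s}(\mathcal K_1))$ and $\mathbf{s}(h,\mathbf{s}(\mathcal K_2))$, giving the final expression $T_{q,h}^{(1)}(F(\cdot/\sqrt2)G(\cdot/\sqrt2))(y)$.

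The main obstacle is purely notational: one must verify carefully that at each stage the symbol $\mathbf{s}(\cdot)$, after a $\sqrt2$-rescaling or a nesting of its arguments, can be rewritten in the form demanded by the next invoked theorem --- concretely, that $\mathbf{s}(\mathbf{s}(\mathcal K_j)/\sqrt2,\,h/\sqrt2)$ and $\mathbf{s}(h,\mathbf{s}(\mathcal K_j))/\sqrt2$ name the same function and that $\mathbf{s}(h,\mathbf{s}(\mathcal K_j))$ is a legitimate choice for the function $\mathbf{s}(h,k_j)$ of Theorem~\ref{thm:cp-tpq02}. All of these reduce to the squaring identities collected above together with the fact, used tacitly throughout, that $\mathbf{s}(\cdot)$ is only ever determined up to sign and modification on an $m_L^2$-null set. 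Once these identifications are in place the chain of equalities is immediate, and no analytic estimate is required beyond those already established in Theorems~\ref{thm:gfft}, \ref{thm:gcp}, \ref{thm:ind01-2019} and \ref{thm:cp-tpq02}.
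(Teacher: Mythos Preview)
Your proposal is correct and follows exactly the route the paper indicates: the paper offers no detailed proof for this theorem, stating only that it is obtained ``in view of equations \eqref{eq:cp-fft-basic} and \eqref{eq:gfft-n-fubini}'', which is precisely the two applications of Theorem~\ref{thm:ind01-2019} followed by one application of Theorem~\ref{thm:cp-tpq02} that you spell out. Your additional care in tracking the $\sqrt2$-scaling and nesting behaviour of the symbol $\mathbf{s}(\cdot)$ makes explicit the identifications that the paper leaves to the reader.
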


\section*{Acknowledgements}
The author would like to express his gratitude to the editor and the referees
for their valuable comments and suggestions which have improved the original paper.


\begin{thebibliography}{32}


\bibitem{ACY}  
Ahn, J.M., Chang, K.S., Yoo, I.:  
Some Banach algebras of Yeh--Feynman integrable functionals.
J. Korean Math. Soc. \textbf{24} 257--266 (1987) 
\bibitem{BG}   
Berkson, E.T., Gillespie, A.:
Absolutely continuous functions of two variables and well-bounded operators.
J. London Math. Soc. \textbf{30} 350--321( 1984)
\bibitem{Brue}  
Brue, M.D.: 
A Functional Transform for Feynman Integrals Similar to the Fourier Transform.
University of Minnesota, Minneapolis  (1972). Ph.D. Thesis
\bibitem{Cameron}  
Cameron, R.H.:
The Ilstow and Feynman integrals.
J. D'Analyse Math. \textbf{10}  287--361 (1962--1963)
\bibitem{CM47}  
Cameron, R.H., Martin, W.T.:
The behavior of measure and measurability under change of scale in Wiener space.
Bull. Amer. Math. Soc. \textbf{53} 130--137 (1947)
\bibitem{CS76} 
Cameron, R.H., Storvick, D.A.:
An $L_2$ analytic Fourier--Feynman transform.
Michigan Math. J. \textbf{23} 1--30 (1976)
\bibitem{CAC88} 
Chang, K.S.,  Ahn, J.M., Chang, J.S.:
Notes on the analytic Yeh--Feynman integrable functionals.
Rocky Mountain J. Math. \textbf{18} 157--165 (1988)  
\bibitem{CC09} 
Chang, S.J., Choi, J.G.:  
Transforms and convolutions on function space.
Commun. Korean Math. Soc. \textbf{24} 397--413 (2009) 
\bibitem{CC17}
Chang, S.J., Choi, J.G.:  
Analytic Fourier--Feynman transforms and convolution products associated with Gaussian processes on Wiener space.
Banach J. Math. Anal. \textbf{11} 785--807 (2017)  
\bibitem{Cohn}
Cohn, D.L.:
Measure Theory,  Second edition.
Birkh\"auser Advanced Texts Basler Lehrb\"ucher, Birkh\"auser, Boston (2013)
\bibitem{Chung87}  
Chung, D.M.:
Scale-invariant measurability in abstract Wiener space.
Pacific J. Math. \textbf{130} 27--40 (1987)  
\bibitem{CPS93}
Chung, D.M., Park, C., Skoug, D.:
Generalized Feynman integrals via conditional Feynman integrals. 
Michigan Math. J. \textbf{40} 377--391 (1993) 
\bibitem{Halmos} 
Halmos, P.R.:
Measure Theory.
Van Nostrand, Princeton (1950)
\bibitem{HPS95}
Huffman, T., Park, C., Skoug, D.:
Analytic Fourier--Feynman transforms and convolution.
Trans. Amer. Math. Soc. \textbf{347} 661--673 (1995)  
\bibitem{HPS96}
Huffman, T., Park, C., Skoug, D.:
Convolutions and Fourier--Feynman transforms of functionals involving multiple integrals.
Michigan Math. J. \textbf{43} 247--261 (1996)  
\bibitem{HPS97-1} 
Huffman, T., Park, C., Skoug, D.:
Convolution and Fourier--Feynman transforms.
Rocky Mountain J. Math. \textbf{27} 827--841 (1997) 
\bibitem{HPS97-2} 
Huffman, T., Park, C., Skoug, D.:
Generalized transforms and convolutions.
Int. J. Math. Math. Sci. \textbf{20} 19--32 (1997) 
\bibitem{HSS01-F} 
Huffman, T., Park, C., Skoug, D.:
A Fubini theorem for analytic Feynman integrals with applications,
J. Korean Math. Soc. \textbf{38} 409--420 (2001) 
\bibitem{HSS01-I} 
Huffman, T., Skoug, D., Storvick, D.
Integration formulas involving Fourier-Feynman transforms via a Fubini theorem.
J. Korean Math. Soc. \textbf{38} 421--435 (2001) 
\bibitem{JS79-a} 
Johnson, G.W., Skoug, D.L.:
An $L_p$ analytic Fourier--Feynman transform.
Michigan Math. J. \textbf{26} 103--127 (1979) 
\bibitem{JS79} 
Johnson, G.W., Skoug, D.L.:
Scale-invariant measurability in Wiener space.
Pacific J.  Math. \textbf{83} 157--176 (1979)  
\bibitem{Kitagawa} 
Kitagawa, T.: 
Analysis of variance applied to function spaces.
Mem. Fac. Sci. Kyusyu Univ. Ser. A  \textbf{6} 41--53 (1951) 
\bibitem{PWZ33}  
Paley, R.E.A.C., Wiener, N., Zygmund, A.:
Notes on random functions.
Math. Z. \textbf{37} 647--668 (1933) 
\bibitem{PS88-Nagoya} 
Park, C., Skoug, D.:
The Feynman integral of quadratic potentials depending on $n$ time variables.
Nagoya Math. J. \textbf{110} 151--162 (1988) 
\bibitem{PS88} 
Park, C., Skoug, D.:
A note on Paley-Wiener-Zygmund stochastic integrals.
Proc. Amer. Math. Soc. \textbf{103} 591--601 (1988) 
\bibitem{PS91}
Park, C., Skoug, D.:
A Kac--Feynman integral equation for conditional Wiener integrals.
J. Integral Equations Appl. \textbf{3} 411--427 (1991) 
\bibitem{PS93}
Park, C., Skoug, D.:
Generalized conditional Yeh--Wiener integrals and a Wiener integral equation 
J. Integral Equations Appl. \textbf{5} 503--518 (1993) 
\bibitem{PS94}
Park, C., Skoug, D.:
An operator-valued Yeh--Wiener integral and a Kac--Feynman Wiener integral equation. 
Proc. Amer. Math. Soc. \textbf{120} 929--942 (1994) 
\bibitem{PSS98}
Park, C., Skoug, D., Storvick, D.:
Relationships among the first variation, the convolution product, and the Fourier--Feynman transform.
Rocky Mountain J. Math. \textbf{28}  1447--1468  (1998) 
\bibitem{Rudin}
Rudin, W.: 
Real and Complex Analysis, Third edition.
McGraw-Hill,  New York (1987) 
\bibitem{SS04} 
Skoug, D. , Storvick, D.: 
A survey of results involving transforms and convolutions  in function space.
Rocky Mountain J. Math. \textbf{34}  1147--1175  (2004) 
\bibitem{Yeh60}
Yeh, J.: 
Wiener measure in a space of functions of two variables.
Trans. Amer. Math. Soc. \textbf{95} 433--450 (1960)  

 
\end{thebibliography}
\end{document}